\newenvironment{paragr}[1][]{\refstepcounter{subsubsection} \noindent \textbf{\thesubsubsection . \ #1}}{\medskip}
\newcounter{keepeqno}
\newtheorem{theo}{Theorem}[section]
\newtheorem{lem}{Lemma}[section]
\newtheorem{prop}{Proposition}[section]
\newtheorem{rem}{Remark}[section]
\newtheorem{conj}{Conjecture}[section]
\numberwithin{equation}{section}
\def\tM{\widetilde{M}}
\def\tm{\widetilde{m}}
\def\bC{\mathbb{C}}
\def\ou{\overline{u}}
\def\oN{\overline{N}}
\def\Alt{\mathrm{Alt}}
\def\tf{\widetilde{f}}
\def\cN{\mathcal{N}}
\def\tpi{\widetilde{\pi}}
\def\Unit{\mathrm{Unit}}
\DeclareMathOperator{\orth}{orth}
\DeclareMathOperator{\Tr}{Trace}
\DeclareMathOperator{\Tra}{Tr}
\DeclareMathOperator{\Temp}{Temp}
\DeclareMathOperator{\Ker}{Ker}
\DeclareMathOperator{\ad}{ad}
\DeclareMathOperator{\Ad}{Ad}
\DeclareMathOperator{\Lie}{Lie}
\DeclareMathOperator{\vol}{vol}
\DeclareMathOperator{\Supp}{Supp}
\DeclareMathOperator{\Gal}{Gal}
\DeclareMathOperator{\St}{St}
\DeclareMathOperator{\Irr}{Irr}
\DeclareMathOperator{\Stab}{Stab}
\DeclareMathOperator{\cA}{\mathcal{A}}
\DeclareMathOperator{\bR}{\mathbb{R}}
\DeclareMathOperator{\bZ}{\mathbb{Z}}
\DeclarePairedDelimiter\ceil{\lceil}{\rceil}
\DeclarePairedDelimiter\floor{\lfloor}{\rfloor}
\DeclareMathOperator{\cC}{\mathcal{C}}
\DeclareMathOperator{\ind}{ind}
\DeclareMathOperator{\stab}{stab}
\DeclareMathOperator{\cU}{\mathcal{U}}
\DeclareMathOperator{\cO}{\mathcal{O}}
\DeclareMathOperator{\Bil}{Bil}
\DeclareMathOperator{\Sym}{Sym}
\DeclareMathOperator{\Id}{Id}
\DeclareMathOperator{\Norm}{Norm}
\DeclareMathOperator{\rs}{rs}
\DeclareMathOperator{\disc}{disc}
\DeclareMathOperator{\cM}{\mathcal{M}}
\DeclareMathOperator{\GL}{\mathrm{GL}}
\DeclareMathOperator{\sr}{sr}
\DeclareMathOperator{\Sh}{Sh}
\DeclareMathOperator{\SO}{\mathrm{SO}}
\DeclareMathOperator{\rO}{\mathrm{O}}
\DeclareMathOperator{\Sp}{\mathrm{Sp}}
\DeclareMathOperator{\SL}{\mathrm{SL}}
\DeclareMathOperator{\bdd}{\mathrm{bdd}}
\DeclareMathOperator{\SI}{\mathrm{SI}}
\DeclareMathOperator{\rI}{\mathrm{I}}
\DeclareMathOperator{\rJ}{\mathrm{J}}
\DeclareMathOperator{\rk}{\mathrm{rk}}
\newcommand{\eq}[1][r]
{\ar@<-3pt>@{-}[#1]
	\ar@<-1pt>@{}[#1]|<{}="gauche"
	\ar@<+0pt>@{}[#1]|-{}="milieu"
	\ar@<+1pt>@{}[#1]|>{}="droite"
	\ar@/^2pt/@{-}"gauche";"milieu"
	\ar@/_2pt/@{-}"milieu";"droite"}
\newlength{\doublefracgap}
\DeclareRobustCommand{\doublefrac}[2]{%
	\mathinner{\mathpalette\doublefrac@{{#1}{#2}}}%
}
\newcommand{\doublefrac@}[2]{\doublefrac@@#1#2}
\newcommand{\doublefrac@@}[3]{%
	\ooalign{%
		\raisebox{\doublefracgap}{$\m@th#1\frac{#2}{\phantom{#3}}$}\cr
		\raisebox{-\doublefracgap}{$\m@th#1\frac{\phantom{#2}}{#3}$}\cr
	}%
}
\title{The Hiraga-Ichino-Ikeda conjecture on formal degrees for classical groups}
\author{Rapha\"el Beuzart-Plessis}
\begin{document}
	
	\maketitle
	
	\begin{abstract}
		We prove a conjecture of Hiraga-Ichino-Ikeda relating formal degrees of square-integrable representations to adjoint gamma factors for symplectic and even orthogonal groups over characteristic zero non-Archimedean local fields. The proof is based on the twisted endoscopic characterization of the local Langlands correspondence for such groups and extends an approach already appearing in the original paper of Hiraga-Ichino-Ikeda \cite{HII} itself inspired from earlier work of Shahidi \cite{Shaend}. Namely, the argument consists in computing in two different ways a residue of certain standard intertwining operators suitably extended to a large representation realized on an explicit space of functions. This gives rise to the spectral decomposition of certain singular twisted orbital integrals. The main theorem follows from a comparison of this identity with the Plancherel formula of a symplectic or even orthogonal group. The proof can be adapted to deal with other classical groups, namely odd orthogonal and unitary groups, but the formal degree conjecture was already established in these cases. 
	\end{abstract}
	
\section{Introduction}

\subsection{The formal degree conjecture}

Let $\underline{G}$ be a connected reductive group defined over a local field $F$ and $G=\underline{G}(F)$. Recall that an admissible irreducible representation $\pi$ of $G$ is said to be square-integrable if its central character is unitary and its matrix coefficients are square-integrable over $G/A_G$, where $A_G$ denotes the maximal split torus in the center of $G$. To a square-integrable representation $\pi$, we can attach its formal degree $d(\pi)$ which is the unique positive real number such that
\begin{equation*}
\displaystyle \int_{G/A_G} \langle \pi(g)v,v^\vee\rangle \langle w,\pi^\vee(g)w^\vee\rangle \; \frac{dg}{da}=\frac{\langle v,w^\vee\rangle \langle w,v^\vee\rangle}{d(\pi)}, \; \mbox{ for every } v,w\in V_\pi,\; v^\vee,w^\vee\in V_\pi^\vee,
\end{equation*}
where $V_\pi$ denotes the space of (smooth vectors of) $\pi$ and $V_\pi^\vee$ the space of the contragredient representation. This invariant however depends a priori on the choice of a Haar measure on $G/A_G$. Following \cite{HII}, we can choose such a Haar measure by fixing a non trivial additive character $\psi: F\to S^1$. More precisely, denoting by $\underline{G}_{\bZ}$ the split form of $\underline{G}$ over $\bZ$, up to $\pm 1$ there is a unique invariant volume form $\omega_{\bZ}$ on $\underline{G}_{\bZ}$ that is nowhere vanishing. Pulling back $\omega_{\bZ}$ by any isomorphism $\underline{G}_{F^{sep}}=\underline{G}\times_F F^{sep}\simeq \underline{G}_{\bZ}\times F^{sep}$ over a separable closure $F^{sep}$ of $F$ gives rise to a nonzero invariant volume form $\omega_{F^{sep}}$ on $\underline{G}_{F^{sep}}$, canonical up to $\pm 1$. We can then pass from $\omega_{F^{sep}}$ to a Haar measure $\lvert\omega_{F^{sep}}\rvert_\psi$ on $G$ by Weil's construction \cite{Weiladeles}: if in local coordinates $x_1,\ldots,x_d$ we have
$$\displaystyle \omega_{F^{sep}}=f(x_1,\ldots,x_d)dx_1\wedge\ldots\wedge dx_d,$$
then
$$\displaystyle \lvert\omega_{F^{sep}}\rvert_\psi=\lvert f(x_1,\ldots,x_d)\rvert d_\psi x_1\ldots d_\psi x_d$$
where $\lvert .\rvert$ denotes the normalised absolute value on $F$, extended to $F^{sep}$, and $d_\psi x$ stands for the $\psi$-self dual Haar measure on $F$. Of course, the $\pm 1$ ambiguity in the definition of $\omega_{F^{sep}}$ disappears when we pass to $\lvert \omega_{F^{sep}}\rvert_\psi$ and we shall denote the resulting Haar measure, which depends only on $\psi$, by $d_\psi g$.

We construct similarly a Haar measure $d_\psi a$ on $A_G$. We can then take $dg=d_\psi g$, $da=d_\psi a$ in the definition of the formal degree, that we will denote by $d_\psi(\pi)$ to emphasize the dependence on $\psi$.

\subsection{The conjecture of Hirage-Ichino-Ikeda}

In \cite{HII}, Hiraga, Ichino and Ikeda have proposed a conjectural formula for $d_\psi(\pi)$ in terms of the local Langlands parameterization for $G$. To be more specific, let us assume that $\underline{G}$ admits a pure inner form $\underline{G}_{qd}$ that is quasi-split. We recall that the pure inner forms of $\underline{G}$ are naturally parameterized by the Galois cohomology set $H^1(\Gamma_F, \underline{G}(F^{sep}))$ (where $F^{sep}$ denotes a separable closure of $F$ and $\Gamma_F=\Gal(F^{sep}/F)$ the absolute Galois group) and that the assumption is satisfied by all classical groups (that is, symplectic, special orthogonal and unitary groups). 

The local Langlands correspondence predicts the existence of a natural decomposition
$$\displaystyle \Irr(G)=\bigsqcup_\phi \Pi^G(\phi)$$
of the admissible dual $\Irr(G)$ of $G$ into $L$-packets $\Pi^G(\phi)$. The latter are finite sets indexed by equivalence classes of $L$-parameters that is group morphisms
$$\displaystyle \phi: W'_F\to {}^L G,$$
where $W'_F$ denotes the Weil-Deligne group of $F$ (just its Weil group $W_F$ if $F$ is Archimedean and $W_F\times \SL_2(\bC)$ if $F$ is non-Archimedean) and ${}^L G=\widehat{G}\rtimes \Gamma_F$ the $L$-group of $G$, that are continuous, semisimple, algebraic on $\SL_2(\bC)$ and commuting with the natural projections to $\Gamma_F$; two $L$-parameters $\phi$, $\phi'$ being equivalent if they are conjugate by an element of the dual group $\widehat{G}$. 

We also expect that each $L$-packet $\Pi^G(\phi)$ can be parametrized by a certain subset of irreducible representations of the (finite) group of components
$$\displaystyle S_\phi:=\pi_0(Z_{\widehat{G}}(\phi))$$
of the centralizer $Z_{\widehat{G}}(\phi)$ of $\phi$ in $\widehat{G}$. That is to say there should exists an injective map
$$\displaystyle \Pi^G(\phi)\hookrightarrow \Irr(S_\phi),$$
$$\displaystyle \pi\mapsto \rho_{\pi,\varpi},$$
which, as the notation suggests, depends on an auxiliary choice $\varpi$ which is that of a Whittaker datum for one of the quasi-split pure inner form $G_{qd}$ of $G$\footnote{Recall that a Whittaker datum is a pair $(N,\xi)$ consisting of the unipotent radical $N$ of a Borel subgroup (defined over $F$) and of a non-degenerate character $\xi$ of $N$, taken up to $G_{qd}$-conjugacy.}.

Let $A_{\widehat{G}}=Z(\widehat{G})^{\Gamma_F,0}$ be the neutral component of the subgroup of $\Gamma_F$-fixed points in the center of $\widehat{G}$. We write $\Ad_G$ for  the adjoint representation of ${}^L G$ on $\Lie(\widehat{G})/\Lie(A_{\widehat{G}})$. For $\pi\in \Irr(G)$, let us denote by $\phi_\pi$ its $L$-parameter (i.\ e.\ $\pi\in \Pi^G(\phi_\pi)$) and let
$$\displaystyle \gamma(s,\pi,\Ad_G,\psi):= \gamma(s,\Ad_G\circ \phi_\pi,\psi)=\epsilon(s,\Ad_G\circ \phi_\pi,\psi)\frac{L(1-s,\Ad\circ \phi_\pi)}{L(s,\Ad\circ \phi_\pi)}$$
be its adjoint gamma factor. Here, $L(s,\Ad_G\circ \phi_\pi)$ denotes the local $L$-factor of the Weil-Deligne representation $\Ad_G\circ \phi_\pi$ whereas $\epsilon(s,\Ad_G\circ \phi_\pi,\psi)$ stands for the local $\epsilon$-factor.

The main conjecture of \cite{HII} on formal degrees can now be stated as follows.

\begin{conj}[Hiraga-Ichino-Ikeda]\label{Conj HII}
Let $\pi$ be a square-integrable irreducible representation of $G$. Then, we have
\begin{equation*}
\displaystyle d_\psi(\pi)=\frac{\deg(\rho_{\pi,\varpi})}{[X^*(A_G):X^*(G)]\lvert S_{\phi_\pi}\rvert} \lvert \gamma(0,\pi,\Ad_G,\psi)\rvert,
\end{equation*}
where $\deg(\rho_{\pi,\varpi})$ denotes the degree of the representation $\rho_{\pi,\varpi}$ of $S_{\phi_\pi}$.
\end{conj}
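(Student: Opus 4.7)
The plan is to carry out, for symplectic and even orthogonal groups, the residue-of-intertwining-operator strategy of Shahidi \cite{Shaend} and Hiraga--Ichino--Ikeda \cite{HII}, now available in this setting thanks to the twisted endoscopic classification of the local Langlands correspondence. The core identity to be established is a spectral expansion of certain singular twisted orbital integrals on a general linear group; the theorem will follow by comparing this identity with the Plancherel formula on $G$.

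First I would set up the ambient twisted situation: let $\widetilde{G}=\GL_N$ (with $N=2n+1$ if $G=\Sp_{2n}$ and $N=2n$ if $G=\SO_{2n}$) endowed with the outer involution $\theta$ that exhibits $G$ as a twisted endoscopic group of $\widetilde{G}$. Composing $\phi_\pi$ with the standard embedding ${}^L G\hookrightarrow {}^L\widetilde{G}$ produces a $\theta$-stable discrete parameter of $\widetilde{G}$ and, by the twisted classification, a $\theta$-stable discrete series $\widetilde{\pi}$ of $\widetilde{G}$. I would then form the parabolically induced family $I_P(\widetilde{\pi}_s)$ on $\GL_{2N}$, with $P=MU$ a parabolic whose Levi is $M\simeq\widetilde{G}$, and study the standard intertwining operator $\mathcal{M}(s)\colon I_P(\widetilde{\pi}_s)\to I_{\overline{P}}(\widetilde{\pi}_s)$. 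The technical heart of the approach is to extend $\mathcal{M}(s)$ to a ``large'' representation realized on a Schwartz space of functions on the symmetric variety $\widetilde{G}/G$ (or a closely related spherical variety attached to $(\widetilde{G},\theta)$); this is the ``explicit space of functions'' of the abstract.

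The residue of the extended $\mathcal{M}(s)$ at the relevant point is then computed in two different ways. On the spectral side, a Plancherel-type expansion on the large space yields a sum over tempered representations of $\widetilde{G}$ distinguished by the symmetric variety, indexed via the twisted endoscopic character identities by elements of the $L$-packet $\Pi^G(\phi_\pi)$; the normalizing factors of $\mathcal{M}(s)$, analyzed in the style of Shahidi, contribute $\gamma(0,\pi,\Ad_G,\psi)$ up to elementary constants. On the geometric side, the same residue unfolds into a linear combination of singular twisted orbital integrals on $\widetilde{G}\rtimes\theta$. The resulting equality is the spectral decomposition of these twisted orbital integrals; transferring them to ordinary orbital integrals on $G$ via Kottwitz--Shelstad transfer and the twisted fundamental lemma, and comparing with the Plancherel formula on $G$ (in which $d_\psi(\pi)$ appears explicitly), produces the stated formula. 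The combinatorial constants $\deg(\rho_{\pi,\varpi})$, $\lvert S_{\phi_\pi}\rvert$, and $[X^*(A_G):X^*(G)]$ arise naturally from the multiplicity structure of the $L$-packet inside the large representation together with the normalization of measures and Whittaker data.

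The main obstacle, as I see it, is the construction and analysis of the extended intertwining operator. One must (i) normalize $\mathcal{M}(s)$ compatibly with the Whittaker datum $\varpi$ of $G_{qd}$, so that the adjoint gamma factor emerges with exactly the right absolute value; (ii) show that the singular part of $\mathcal{M}(s)$ captures precisely the tempered spectrum relevant to $\Pi^G(\phi_\pi)$, with multiplicities dictated by the $S_{\phi_\pi}$-combinatorics and the representations $\rho_{\pi,\varpi}$; and (iii) handle the symplectic and even orthogonal cases uniformly despite their different transfer factors and component-group structures. With Arthur's twisted classification and the twisted fundamental lemma in hand, all three points should be tractable by refining the techniques already present in \cite{HII} and \cite{Shaend}.
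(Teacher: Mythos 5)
Your high-level narrative --- compute the residue of a standard intertwining operator extended to a large reducible representation in two ways, identify the geometric side with singular twisted orbital integrals, then combine the twisted endoscopic character identities with the Plancherel formula for $G$ --- is indeed the paper's strategy. But two concrete choices in your setup would derail the argument. The first is the ambient group. You induce from a Levi $M\simeq\GL_N$ to $\GL_{2N}$: no maximal parabolic of $\GL_{2N}$ has such a Levi, and even for the Levi $\GL_N\times\GL_N$ the intertwining operator is normalized by the Rankin--Selberg factor $\gamma(s,\widetilde{\pi}\times\widetilde{\pi}^\vee,\psi)=\gamma(s,\widetilde{\pi},\Sym^2,\psi)\,\gamma(s,\widetilde{\pi},\wedge^2,\psi)$. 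The paper instead embeds $M=\GL_d(F)$ as the Siegel Levi of the split \emph{odd orthogonal} group $\SO_{2d+1}(F)$, whose intertwining operator is normalized by $\gamma(s,\pi,\Sym^2,\psi)$ alone. This is essential: the pole of $\gamma(s,\pi,\Sym^2,\psi)^{-1}$ at $s=0$ is what localizes the residue on the representations of orthogonal type (the image of the lift from $G$), and dividing the Plancherel density $\gamma^*(\pi,\Ad_M,\psi)=\gamma^*(\pi,\Sym^2,\psi)\gamma^*(\pi,\wedge^2,\psi)$ by the $\Sym^2$ factor leaves exactly $\gamma^*(0,\pi,\wedge^2,\psi)$, which is the adjoint gamma factor of $G$. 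With the Rankin--Selberg normalization the residue neither concentrates on the right parameters nor produces the right weight.

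The second problem is your treatment of the lift. Composing a discrete parameter $\phi_\pi$ of $G$ with ${}^L G\hookrightarrow\GL_N(\bC)$ yields a multiplicity-free sum of irreducible self-dual summands, so the corresponding representation of $\GL_N(F)$ is a discrete series only when $\phi_\pi$ is irreducible --- the stable case already handled in \cite{HII}. Running the residue argument on a single irreducible tempered $\widetilde{\pi}$ therefore recovers only that case, which is precisely the limitation the ``big representation'' is meant to overcome. Moreover that big representation is not the Schwartz space of a symmetric variety $\widetilde{G}/G$: it is the regular representation $C_c^\infty(A\backslash M,\chi)$ of the Levi itself, realized via compactly supported sections on $AN\backslash G$ supported in the open Bruhat cell, and its spectral decomposition is the ordinary group Plancherel formula for $\GL_d(F)$, not a relative one. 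Finally, the paper proves the Plancherel-density identity and deduces the formal degree statement via $\mu_{G,\chi}(\pi)=d(\pi)$; the factor $[X^*(A_G):X^*(G)]$ arises from comparing spectral measures with and without fixed central character, $\lvert S_{\phi_\pi}\rvert$ from the residue computation combined with the twisted character relation, and $\deg\rho_{\pi,\varpi}=1$ automatically since $S_{\phi_\pi}$ is abelian here --- no multiplicity analysis of the packet inside the large representation is required.
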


\begin{rem}
The conjecture includes implicitely the fact that $\gamma(s,\pi,\Ad_G,\psi)$ is regular at $s=0$. However, it is expected that, for $\pi$ square-integrable, the $L$-parameter $\phi_\pi$ is discrete, which means that its centralizer $Z_{\widehat{G}}(\phi_\pi)$ is finite modulo $A_{\widehat{G}}$, and this last condition is readily seen to imply that $\gamma(s,\pi,\Ad_G,\psi)$ is regular for $\Re(s)\geq 0$.

Since the formal degree $d_\psi(\pi)$ is obviously independent of the Whittaker datum $\varpi$, the conjecture also implies that the degree $\deg(\rho_{\pi,\varpi})$ doesn't depend on $\varpi$ either. There is however a precise expectation on the dependence of $\rho_{\pi,\varpi}$ on $\varpi$, see \cite[\S 9, (3)]{GGP}, namely changing the Whittaker datum should result in twisting $\rho_{\pi,\varpi}$ by a certain one-dimensional character $S_{\phi_\pi}\to \bC^\times$; an operation that clearly doesn't affect the degree of the representation.

Finally, the denominator in the above formula is slightly different from the one in \cite{HII}. Namely, in {\em op.\ cit.\ } the factor in the denominator is the cardinality of the centralizer $S_{\phi_\pi}^\natural$ in the dual group of $G^\natural:=G/A_G$. However it is easy to see that for discrete $L$-parameters we have
$$\displaystyle \lvert S_{\phi_\pi}^\natural\rvert=[X^*(A_G):X^*(G)]\lvert S_{\phi_\pi}\rvert.$$ 
We also emphasize that this conjecture translates into a more natural one, that is without the factor $[X^*(A_G):X^*(G)]$, for the Plancherel density (suitably normalised) of square-integrable representations; see the next section.
\end{rem}

\subsection{Plancherel density}

When combined with a conjecture of Langlands \cite[Appendix II]{LangEis} on the normalization of standard intertwining operators, Conjecture \ref{Conj HII} leads to a conjectural formula for the Plancherel density of $G$. More precisely, we recall that the Plancherel formula for $G$ takes the form \cite{WaldPlanch}
$$\displaystyle f(1)=\int_{\Temp(G)} \Theta_\pi(f) d\mu_G(\pi),\;\;\; f\in C_c^\infty(G),$$
where $\Temp(G)$ denotes the tempered dual of $G$, that is the set of isomorphism classes of irreducible tempered representations of $G$, $\Theta_\pi(f):=\Tra \pi(f)$ is the distributional character of $\pi$ and $d\mu_G(\pi)$ is a certain measure on $\Temp(G)$, the so-called Plancherel measure. We emphasize that the character $\Theta_\pi$ depends on a chosen Haar measure on $G$ (through the formation of the operator $\pi(f)=\int_G f(g)\pi(g)dg$) and therefore, under our current normalization, on the additive character $\psi$.

The Plancherel density $\mu_G(\pi)$ appears when we compare the Plancherel measure $d\mu_G(\pi)$ to some natural, more elementary, measure $d\pi$ on $\Temp(G)$. Namely, recall that the irreducible tempered representations of $G$ are exactly the irreducible constituents of normalised parabolic inductions $\rI_P^G(\sigma)$, where $P=MU$ is a parabolic subgroup of $G$ and $\sigma$ a square-integrable representation of $M$. For such $P$ and $\sigma$, the group $X^u(M)$ of unitary unramified characters of $M$ acts by twisting on square-integrable representations:
$$\displaystyle (\lambda,\sigma)\mapsto \sigma_\lambda:=\sigma\otimes \lambda.$$
This group is a compact real torus and roughly speaking $d\pi$ is the unique measure such that the map
\begin{equation*}
	\displaystyle X^u(M)\to \Temp(G),\; \lambda\mapsto [\rI_P^G(\sigma_\lambda)]
\end{equation*}
is locally measure preserving for every $P=MU$ and $\sigma$ as before, where $[\rI_P^G(\sigma_\lambda)]$ stands for the isomorphism class of $\rI_P^G(\sigma_\lambda)$ and we endow $X^u(M)$ with a certain Haar measure. However, one subtlety here is that the representation $\rI_P^G(\sigma_\lambda)$ is not always irreducible and the above map is not always locally injective. These two phenomena are actually related and we refer the reader to \S \ref{S spectral measure} for a precise definition. We simply emphasize here that the Haar measure with which we need to endow $X^u(M)$ is not necessarily the obvious one, that is the one of total mass $1$, but should rather be defined as follows. Let $\underline{A}^M$ be the maximal split torus that is a quotient of $\underline{M}$ and $A^M$ be its group of $F$-points (thus, with notation from the previous section, the dual torus $\widehat{A^M}$ identifies naturally with the central torus $A_{\widehat{M}}\subset \widehat{M}$). Denoting by $X^u(A^M)$ the group of unitary unramified characters of $A^M$, restriction along the natural morphism $M\to A^M$ induces a finite quotient morphism $X^u(A^M)\to X^u(M)$ and we choose the unique Haar measure on $X^u(M)$ such that this map is locally measure-preserving when $X^u(A^M)$ is equipped with its Haar measure of total mass $1$.

It is known that the Plancherel measure is absolutely continuous with respect to the ``standard'' measure $d\pi$ fixed as above, that is there exists a function $\mu_G:\Temp(G)\to \bR_{\geq 0}$ such that
$$\displaystyle d\mu_G(\pi)=\mu_G(\pi)d\pi.$$
Moreover, for $\pi=\rI_P^G(\sigma)$ where $\sigma$ is square-integrable, this Plancherel density $\mu_G(\pi)$ further decomposes as a product
$$\displaystyle \mu_G(\pi)=j(\sigma)^{-1} d_\psi(\sigma)$$
where $d_\psi(\sigma)$ denotes as before the formal degree of $\sigma$ and $j(\sigma)$ is a certain product of standard intertwining operators. The conjectural normalization \cite[Appendix II]{LangEis} of such intertwining operators together with Conjecture \ref{Conj HII} then leads to the following prediction.

\begin{conj}[Hiraga-Ichino-Ikeda, Langlands]\label{Conj LHII}
For almost all $\pi\in \Temp(G)$, we have
\begin{equation}\label{eq intro Planch measure}
\displaystyle \mu_G(\pi)=\frac{\deg (\rho_{\pi,\varpi})}{\lvert S_{\phi_\pi}\rvert} \lvert \gamma^*(0,\pi,\Ad_G,\psi)\rvert,
\end{equation}
where, if $n_\pi$ denotes the order of the zero of $\gamma(s,\pi,\Ad_G,\psi)$ at $s=0$,
\begin{equation*}
\displaystyle \gamma^*(0,\pi,\Ad_G,\psi):=\left(\zeta_F(s)^{n_\pi} \gamma(s,\pi,\Ad_G,\psi)\right)_{s=0},
\end{equation*}
with $\zeta_F(s)$ the local zeta factor of $F$.
\end{conj}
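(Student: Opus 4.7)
The strategy is to deduce Conjecture \ref{Conj LHII} from Conjecture \ref{Conj HII} (applied on Levi subgroups of $G$) combined with Langlands' conjectural normalization of standard intertwining operators. By Harish-Chandra's theory, almost every $\pi \in \Temp(G)$ has the form $\pi = \rI_P^G(\sigma)$ (irreducible) for some parabolic $P = MU \subset G$ and some square-integrable representation $\sigma$ of $M$; for such $\pi$ the factorization $\mu_G(\pi) = j(\sigma)^{-1} d_\psi(\sigma)$ recalled above reduces the problem to comparing this product with the right-hand side of \eqref{eq intro Planch measure}.

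The two main inputs are as follows. First, Conjecture \ref{Conj HII} applied to $\sigma$ on $M$ expresses $d_\psi(\sigma)$ in terms of $\deg(\rho_{\sigma,\varpi_M})$, $|S_{\phi_\sigma}|$, $[X^*(A_M):X^*(M)]$ and $|\gamma(0,\sigma,\Ad_M,\psi)|$. Second, Langlands' conjectural normalization of $j(\sigma)$ expresses it, up to signs, as a product of gamma factors of $\sigma$ for the adjoint representation $r_P$ of ${}^L M$ on $\widehat{\mathfrak{u}} := \Lie(\widehat{U})$ and its contragredient $r_P^\vee$; passing to absolute values removes the sign ambiguity.

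The bulk of the argument is then bookkeeping for the factorization of the adjoint gamma factor of $\pi$ induced by the decomposition
$$ \Lie(\widehat{G})/\Lie(A_{\widehat{G}}) = \bigl(\Lie(A_{\widehat{M}})/\Lie(A_{\widehat{G}})\bigr) \oplus \bigl(\Lie(\widehat{M})/\Lie(A_{\widehat{M}})\bigr) \oplus \widehat{\mathfrak{u}} \oplus \widehat{\overline{\mathfrak{u}}} $$
of ${}^L M$-modules. Setting $d = \dim(A_{\widehat{M}}/A_{\widehat{G}})$, the first summand is the trivial $d$-dimensional representation; since $\phi_\sigma$ is discrete on $M$, the remaining factors of $\gamma(s,\pi,\Ad_G,\psi)$ are regular and nonvanishing at $s = 0$, so $n_\pi = d$, and $\gamma^*(0,\pi,\Ad_G,\psi)$ splits as a product of $\zeta_F(1)^d$ with the gamma factors at $s = 0$ of $\sigma$ for $\Ad_M$, $r_P$ and $r_P^\vee$ (up to signs).

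Finally one matches all constants. For square-integrable $\sigma$ on $M$ with $\rI_P^G(\sigma)$ irreducible, the cuspidality of $\phi_\sigma$ forces $Z_{\widehat{G}}(\phi_\pi) = Z_{\widehat{M}}(\phi_\sigma)$, so $S_{\phi_\pi} = S_{\phi_\sigma}$, and compatible Whittaker data give $\rho_{\pi,\varpi} = \rho_{\sigma,\varpi_M}$; these two factors cancel directly. The remaining discrepancy --- the factor $[X^*(A_M):X^*(M)]^{-1}$ from Conjecture \ref{Conj HII} together with the $\zeta_F(1)^d$ extracted from $\gamma^*$, against the absence of any such factor on the right-hand side of \eqref{eq intro Planch measure} --- is designed to be absorbed by the specific normalization of $d\pi$ recalled in \S\ref{S spectral measure}, which uses the Haar measure of total mass one on $X^u(A^M)$ rather than on $X^u(M)$. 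The main conceptual input is pinning down Langlands' normalization of $j(\sigma)$ in the stated form; the main technical obstacle is then verifying that all these measure normalizations and covolume constants fit together correctly.
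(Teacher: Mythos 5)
The first thing to say is that the statement you are addressing is stated in the paper as a \emph{conjecture}, not a theorem: the paper offers no proof of Conjecture \ref{Conj LHII} in general, and only records that it is what one obtains by combining Conjecture \ref{Conj HII} (applied on Levi subgroups) with Langlands' conjectural normalization of standard intertwining operators --- precisely the two inputs you invoke. Your proposal is therefore a reconstruction, correct in outline, of that conditional derivation, which is carried out in detail in \cite{HII} itself. But it is not, and cannot be, an unconditional proof: both of your inputs are open in general, so if your intention was to establish \eqref{eq intro Planch measure} outright, that is the gap, and it is not reparable along these lines. Note in particular that the unconditional result the paper actually proves (Theorem \ref{theo1 intro}, for symplectic and even special orthogonal groups) is obtained by a completely different route --- twisted endoscopic character identities together with the computation of a residue of an intertwining operator acting on a large, non-admissible induced representation --- and does not pass through Conjecture \ref{Conj HII} on Levi subgroups at all.

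Within the conditional derivation, two points of your bookkeeping deserve care. First, the trivial summand $\Lie(A_{\widehat{M}})/\Lie(A_{\widehat{G}})$ contributes $\gamma^*(\mathbf{1}_F,\psi)^{d}=\bigl(\epsilon(0,\mathbf{1}_F,\psi)\,\zeta_F(1)\bigr)^{d}$ to $\gamma^*(0,\pi,\Ad_G,\psi)$, not merely $\zeta_F(1)^{d}$; the extra $\epsilon$-factor is nontrivial whenever $\psi$ has nonzero level, and it is exactly what must cancel against the $\psi$-dependent volume $\vol(A_M^1)=\gamma^*(\mathbf{1}_F,\psi)^{-\dim(A_M)}$ entering the comparison of $d_\psi(\sigma)$ with the measure $d\pi$ (this is the content of Lemma \ref{lem spectral measures} applied to $M$). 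Second, the identification $S_{\phi_\pi}=S_{\phi_{\sigma}}$ and the equality $Z_{\widehat{G}}(\phi_\pi)=Z_{\widehat{M}}(\phi_\sigma)$ are not forced by discreteness of $\phi_\sigma$ alone; they hold only for $\sigma$ in general position in its unramified orbit (equivalently, when the normalizer of $(M,\sigma)$ in $G$ is $M$), which is covered by the ``almost all'' caveat but should be stated as such rather than attributed to ``cuspidality''.
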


\begin{rem}
Identity \eqref{eq intro Planch measure} should really only hold for almost all tempered representations. Indeed, for $P=MU$ a parabolic subgroup and $\sigma$ a square-integrable representation of $M$ it is known that $\lambda\in X^u(M)\mapsto \mu_G(\rI_P^G(\sigma_\lambda))$ extends to a continuous function whereas the right-hand side of \eqref{eq intro Planch measure} isn't continuous at the points $\lambda$ where the order $n_{\rI_P^G(\sigma_\lambda)}$ of the zero drops. The formula should however hold for $\pi=\rI_P^G(\sigma)$ as soon as the normalizer of $(M,\sigma)$ in $G$ is reduced to $M$.
\end{rem}

\subsection{The main result}

Let now $G$ be a symplectic or even special orthogonal group. For such $G$ the local Langlands correspondence has been constructed by Arthur \cite{artbook} (when $G$ is quasi-split) and M\oe{}glin \cite{Moegpaquetstable} with one caveat: when $G$ is an even special orthogonal group, the $L$-parameters are considered up to the action of the outer automorphism $\theta$ coming from conjugation by the nonneutral component of the corresponding full orthogonal group, the $L$-packets are correspondingly $\theta$-stable and the map $\pi\in \Pi^G(\phi)\mapsto \rho_{\pi,\varpi}$ is not necessarily injective. However, for any $L$-parameter of $G$ the Weil-Deligne representations $\Ad_G\circ \phi$ and $\Ad_G\circ \phi^\theta$ (where $\phi^\theta$ is the image of $\phi$ by the automorphism $\theta$) are isomorphic, so that the adjoint $\gamma$-factor of an $L$-parameter for $G$ only depends on its $\theta$-orbit which therefore allows to formulate the Hiraga-Ichino-Ikeda conjecture despite this ambiguity. Moreover, in both cases the centralizers $S_{\phi_\pi}$ are always $2$-abelian groups and therefore the numerators in Conjectures \ref{Conj HII} and \ref{Conj LHII} are both equal to one. Thus, the following theorem gives Conjecture \ref{Conj LHII} for $G$.

\begin{theo}\label{theo1 intro}
Assume that $G$ is either a symplectic of an even special orthogonal group. Then, for almost every $\pi\in \Temp(G)$, we have
\begin{equation*}
\displaystyle \mu_G(\pi)=\frac{1}{\lvert S_{\phi_\pi}\rvert}\lvert \gamma^*(0,\pi,\Ad_G,\psi)\rvert.
\end{equation*}
\end{theo}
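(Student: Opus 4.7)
The plan, as sketched in the abstract, is to compute a residue of a standard intertwining operator in two different ways and compare the resulting identity with the Plancherel formula of $G$. The framework is the twisted endoscopic characterization of the local Langlands correspondence for $G$ due to Arthur and M\oe{}glin: stable tempered $L$-packets of $G$ transfer to $\theta$-stable tempered representations of $\GL_N$ (for $N=2n+1$ in the symplectic case and $N=2n$ in the even orthogonal case), where $\theta$ denotes the standard outer automorphism used for twisted endoscopy.

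First, I would fix a suitable maximal Levi $M \subset \GL_N$ and a $\theta$-stable discrete series $\sigma$ of $M$ whose normalized parabolic induction, at a critical parameter $s=s_0$, contains the representation $\widetilde\pi$ of $\GL_N$ attached to a discrete $L$-parameter $\phi$ of $G$. The standard intertwining operator $M(s,\sigma): \rI_P^{\GL_N}(\sigma_s) \to \rI_{\overline{P}}^{\GL_N}(\sigma_s)$ is then extended naturally to an explicit space of functions on $\GL_N$ (e.g.\ via realization on the open Bruhat cell). The residue at $s=s_0$ is computed analytically in this large model, yielding a certain singular twisted orbital integral (twisted by $\theta$) on $\GL_N$ admitting a clean spectral expansion. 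On the other hand, the Langlands-Shahidi normalization built into $M(s,\sigma)$ expresses the same residue in terms of local gamma factors which, by Arthur's identification of local parameters with the parameters defined through the Langlands-Shahidi method, yields exactly the adjoint gamma factor $\gamma^*(0,\pi,\Ad_G,\psi)$.

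The resulting twisted spectral identity is then transferred to $G$ via twisted endoscopic transfer (Kottwitz-Shelstad, Mezo, Waldspurger). On the $G$-side, the transferred integrals of the chosen test functions admit a spectral expansion governed by the Plancherel formula $f(1)=\int_{\Temp(G)} \Theta_\pi(f)\mu_G(\pi)d\pi$. Matching the two spectral expansions, isolating individual $\pi \in \Pi^G(\phi)$ via Arthur's endoscopic character identities, and carefully tracking the Weil-$\psi$ normalizations of Haar measures, one extracts the desired identity $\mu_G(\pi)=\lvert S_{\phi_\pi}\rvert^{-1}\lvert \gamma^*(0,\pi,\Ad_G,\psi)\rvert$ for almost all tempered $\pi$.

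The hard part will be in the precise residue computation: selecting the correct Levi and test function so that the singular twisted orbital integral has a tractable spectral expansion, and verifying that the Langlands-Shahidi normalization of the intertwining operator is compatible with the Weil-normalized Haar measures used to define $d_\psi(\pi)$ and $\mu_G(\pi)$. Additional bookkeeping is required for even orthogonal groups, where the outer automorphism $\theta$ intervenes in the very definition of the $L$-packets, and where one must be careful that the full spectral identity descends compatibly to the (possibly non-injective) parametrization $\pi \mapsto \rho_{\pi,\varpi}$ modulo $\theta$.
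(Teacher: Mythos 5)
Your high-level plan --- compute the residue of a standard intertwining operator in two ways, transfer the resulting identity by twisted endoscopy, and compare with the Plancherel formula --- is indeed the strategy of the paper, but the central construction as you describe it has two genuine gaps. First, you place the intertwining operator \emph{inside} $\GL_N$, inducing a $\theta$-stable discrete series $\sigma$ from a maximal Levi of $\GL_N$. In that configuration the Langlands--Shahidi normalization of $M(s,\sigma)$ is governed by Rankin--Selberg gamma factors of the blocks of $\sigma$, and the realization on the open Bruhat cell produces \emph{ordinary} orbital integrals on $\GL_N$: neither the symmetric-square factor nor the $\theta$-twisted orbital integrals that must be matched with $H$ under twisted transfer can appear. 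The correct configuration, following Shahidi, is to view $M=\GL_d(F)$ itself as the Siegel Levi of the larger split group $\SO_{2d+1}(F)$ and to study $\mathcal{M}(\pi,s):\rI_P^{\SO_{2d+1}}(\pi_s)\to\rI_{\overline{P}}^{\SO_{2d+1}}(\pi_s)$. Only there is the normalizing factor $\gamma(s,\pi,\Sym^2,\psi)$, whose behaviour at $s=0$ singles out orthogonal-type $\pi$ and yields $\gamma^*(0,\pi,\wedge^2,\psi)=\gamma^*(0,\sigma,\Ad_H,\psi)$, and only there does the Bruhat decomposition of $\oN$ identify the geometric side of the residue with twisted orbital integrals $O(\gamma_t,\cdot)$ on the twisted space $\tM$ of nondegenerate bilinear forms.

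Second, even in the correct ambient group, taking the inducing datum to be an irreducible ($\theta$-stable) discrete series at a critical point $s_0$ reproduces the original Hiraga--Ichino--Ikeda/Shahidi computation, which only reaches \emph{stable} discrete series (singleton packets): for a general packet one obtains an identity for the sum of characters that cannot be disaggregated at this stage. The essential new ingredient is to run the residue computation at $s=0$ with the ``big'' non-admissible representation $C_c^\infty(A\backslash M,\chi)$, disintegrate it via the Plancherel formula of $M$, and then evaluate the limit $\lim_{s\to 0^+}\gamma(s,\mathbf{1}_F,\psi)\int\Phi(\pi)\gamma(s,\pi,\Sym^2,\psi)^{-1}\mu_{M,\chi}(\pi)\,d_\chi\pi$; this delicate limit of distributions is precisely where the restriction to $\Temp_\chi^{\orth}(M)$ and the factor $1/\lvert S_\pi^{+}\rvert$ come from, and it is what ultimately lets the uniqueness of the Plancherel decomposition of $H$ isolate each $\mu_H(\sigma)$ individually. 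You also need, and do not mention, the identification of the specific linear combination $\rI_\chi$ of twisted orbital integrals with the transfer of the central Dirac $\delta_\epsilon$ on $H^*$ (a homogeneity argument on unipotent orbital integrals); without it the geometric side of your identity cannot be compared with $f^H(\epsilon)$ and hence with the Plancherel formula for $H$.
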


\subsection{Spectral decomposition of certain twisted orbital integrals}

To establish \eqref{theo1 intro}, we use twisted endoscopic character relations for $G$, characterizing the tempered $L$-packets $\Pi^G(\phi)$ in terms of twisted characters of some $M=\GL_d(F)$, or more conveniently characters of the twisted space (in the sense of Labesse) $\tM$ of nondegenerate bilinear forms on $F^d$. More precisely, the Dirac measure at a particular central element $\epsilon\in Z(G)$ transfers to a certain linear combination of twisted orbital integrals (see \ref{Sect transfer dirac} for a precise statement)
$$\displaystyle f\in C_c^\infty(\widetilde{M})\mapsto I_{\chi}(f)=\sum_{t\in F^\times/F^{\times,2}} \chi(-t) O(\gamma_t,f),$$
where $\gamma_t\in \tM$ denotes any nondegenerate bilinear form that is the sum of an alternating form and a quadratic form equivalent to $x\mapsto tx^2$ (such element exists and is unique up to $M$-conjugacy). Here $\chi$ is the quadratic character naturally associated to the (pure inner class of) special orthogonal or symplectic group $G$ (e.g. if $G$ is split, for example symplectic, then $\chi$ is trivial). Combining this with the twisted endoscopic relations, Theorem \ref{theo1 intro} is readily reduced to a certain formula giving a sort of ``spectral expansion'' for $I_{\chi}(f)$ in terms of twisted characters.

The main part of this paper is then devoted to proved this spectral expansion of $I_{\chi}$. We refer the reader to Theorem \ref{theo spectraldec orbint} for a precise statement. Let us just mention here that this formula is obtained by computing in two different ways the residue of a certain intertwining operator on the split group $G=\SO_{2d+1}(F)$, more precisely residue at $s=0$ of a self-normalizing intertwining operator
$$\displaystyle \mathcal{M}(\pi,s): \rI^G_P(\pi\otimes \lvert \det\rvert^s)\to \rI^G_{\overline{P}}(\pi\otimes \lvert \det\rvert^s),$$
where $P\subset G$ is the maximal parabolic subgroup with Levi factor $M=\GL_d(F)$ and $\overline{P}$ the opposite parabolic subgroup. The idea that residue of intertwining operators are related to (singular) twisted orbital integrals is not new and is a very nice observation due to Shahidi \cite{Shaend}. This was moreover already used in the original paper of Hiraga, Ichino and Ikeda \cite{HII} to show their conjecture for stable discrete series of unitary groups (see Section 8 of {\it op. cit.}\ ). Besides, their proof can readily be adapted to deal with stable discrete series of other classical groups. 

Thus, the main innovation of this paper is to allow for a similar treatment of all discrete series; including those which are not stable. To achieve this we consider the residue of $\mathcal{M}(\pi,s)$ when $\pi$ is a ``big'' representation of $M$, i.e. one that is not irreducible nor of finite length. More precisely, we will take $\pi$ to be the regular representation on $C_c^\infty(A\backslash M,\chi)$, the space of test functions with a fixed quadratic central character $\chi$ ($A$ denoting the center of $M$). We can still make sense of the standard intertwining operator $\mathcal{M}(\pi,s)$, given by the usual integral, when $\Re(s)>0$ and revisiting Shahidi's original computation this can be shown to admit a meromorphic extension with a simple pole at $s=0$ whose residue is directly related to the twisted orbital integral $I_\chi(f)$.

To compute the same residue spectrally, we essentially combine two ingredients: the Plancherel formula for $M$ (which allows to ``desintegrate'' the induced representation $\rI_P^G(\pi\otimes \lvert .\rvert^s)$ into parabolic inductions of irreducible representations) and Shahidi's normalization of the standard intertwining operators in terms of $\gamma$-factors. More precisely, this reduces the spectral decomposition to the computation of the residue of a very explicit analytic family of distributions on the tempered sual $\Temp(M)$. This computation has incidentally already been done in \cite{BPPlanch}, in the process of establishing an explicit Plancherel decomposition for the Galois symmetric spaces $\GL_n(F)\backslash \GL_n(E)$ (where $E/F$ is a quadratic extension) and leads to the required result.

\subsection{Remark on other classical groups}

The proof presented here can actually be adapted to deal with other classical groups, that is unitary and odd special orthogonal groups. More precisely, if $H$ is a special orthogonal group $\SO(W)$ of a quadratic space $W$ of odd-dimension $2n+1$, there are similar endoscopic relations between stable characters of tempered $L$-packets for $H$ and characters of $M=\GL_{2n}(F)$ twisted by the automorphism $\theta: g\mapsto {}^t g^{-1}$ (so that the underlying twisted space is the same as above), whereas if $H$ is an unitary group $U(W)$ of a Hermitian space $W$ of dimension $n$ over a quadratic extension $E/F$, the endoscopic relations involve characters of $M=\GL_n(E)$ twisted by the automorphism $\theta: g\mapsto {}^t \overline{g}^{-1}$ (where $g\mapsto \overline{g}$ denotes the action of the nontrivial Galois automorphism of $E/F$). In both cases, Conjecture \ref{Conj LHII} can be reduced to a spectral expression for the twisted orbital integral $O(\gamma,f)$, where $\gamma\in \tM=M\theta$ is an element preserving a pinning of $M$, involving exterior square or Asai local gamma factors. Such formula can in turn be obtained by the same strategy, essentially replacing the group $G$ by either the split special orthogonal group $G=\SO_{4n}(F)$ or the quasi-split unitary group $G=U_{2n}(E/F)$ and $P$ by the maximal parabolic subgroup with Levi factor $M$.

However, the formal degree conjecture has already been established in these cases by \cite{ILMfd} and \cite{BPPlanch} respectively and to avoid burdening the reader with lot of case specific computations, I have decided to exclude these groups from the paper.

Besides, as a side remark, the spectral expansion of $O(\gamma,f)$ in terms of the Plancherel density of $H$ was written previously in the work of J. Cohen \cite{CohenJ} as a consequence of the endoscopic character relations. This is however not the same as the spectral formula that is needed to establish the formal degree conjecture in the spirit of the present work, which as explained in the previous paragraph has to be extracted from the Plancherel formula for $M$ more directly and only a posteriori compared with the Plancherel for $H$ via the endoscopic relations.

\subsection{Acknowledgements}

This paper's main result has been presented at numerous seminars and conferences since summer 2021. I would like to apologise to anyone who was promised an initial draft of this paper earlier. I would also like to thank Michael Harris, Wee Teck Gan and Atsushi Ichino warmly for inspiring discussions during the semester-long visit to Columbia University as well as at the summer school 'Arithmetic Geometry in Carthage' in 2019, where most of the details of this paper were envisioned.

This work was funded by the European Union ERC Consolidator Grant, RELANTRA, project number 101044930. Views and opinions
expressed are however those of the authors only and do not necessarily reflect those of the European Union or the European Research Council. Neither the European Union nor the granting authority can be held responsible for them.

	\section{Groups, measures, representations}
	
	\begin{paragr}[Field.]
		We will work over a local non-Archimedean field $F$ of characteristic zero. We denote by $\cO_F$ the ring of integers of $F$, by $\mathfrak{p}_F\subset \cO_F$ the maximal ideal, by $k=\cO_F/\mathfrak{p}_F$ the residual field, $q=\lvert k\rvert$ its cardinality and by $\lvert .\rvert: F\to \bR_{>0}$ the normalised absolute value (i.e. the one such that $\lvert F^\times\rvert=q^{\bZ}$). We denote by $\zeta_F(s)=(1-q^{-s})^{-1}$ the local zeta factor of $F$.
		
		 We fix from now on a nontrivial character $\psi: F\to \bC^\times$ and we denote by $n(\psi)\in \bZ$ the {\em level of $\psi$} that is the largest integer such that $\psi$ is trivial on $\mathfrak{p}_F^{n(\psi)}$. We will always abuse notation and denote the same way a linear algebraic groups defined over $F$ and their groups of $F$-points. Moreover, unless otherwise specified, all the groups and subgroups are defined over $F$.
	\end{paragr}
	
	\begin{paragr}[Haar measures.]\label{S measures}
		For $G$ a connected reductive group over $F$, we denote by $A_G$ the maximal split torus in its center and by $X^*(G)$ its group of algebraic characters (defined over $F$). Using $\psi$, we can equip both $G$ and $A_G$ with canonical Haar measures $dg$ and $da$ as in \cite[\S 2.5]{BPPlanch}. When we want to emphasize the dependance on $\psi$, we will call these measures the {\em $\psi$-Haar measures}. 
		
		Denoting by $A_G^1$ the maximal compact subgroup of $A_G$, we have
		$$\displaystyle \vol(A_G^1,da)=\left(q^{n(\psi)/2}(1-q^{-1})\right)^{\dim(A_G)}=\gamma^*(\mathbf{1}_F,\psi)^{-\dim(A_G)}$$
		where
		$$\displaystyle \gamma(s,\mathbf{1}_F,\psi)=\epsilon(s,\mathbf{1}_F,\psi)\frac{\zeta_F(1-s)}{\zeta_F(s)}$$
		denotes $\gamma$-factor of the trivial character of $F^\times$ and
		$$\displaystyle \gamma^*(\mathbf{1}_F,\psi):=\left(\zeta_F(s) \gamma(s,\mathbf{1}_F,\psi)\right)_{s=0}=\epsilon(0,\mathbf{1}_F,\psi)(1-q^{-1})^{-1}$$
		is its regularized value at $0$.
		
		We set
		$$\displaystyle \cA_G^*=X^*(A_G)\otimes \bR=X^*(G)\otimes \bR.$$
		There is a natural surjective morphism from the complexification $\cA_{G,\bC}^*$ to the group $X^{ur}(G)$ of unramified characters of $G$ sending $\lambda=\chi\otimes s$ to the character $g\mapsto g^\lambda:=\lvert \chi(g)\rvert^s$. Note that elements of the lattice $\frac{2i\pi}{\log(q)}X^*(G)$ induce the trivial character. We equip $i\cA_G^*$ with the unique Haar measure giving the quotient $i\cA_G^*/\frac{2i\pi}{\log(q)}X^*(G)$ volume $1$.
		
		For every Levi subgroup $M\subset G$, the restriction map $X^*(G)\to X^*(M)$ induces a natural embedding $\cA_G^*\subset \cA_M^*$ and we set $(\cA_M^G)^*=\cA_M^*/\cA_G^*$. We equip $i(\cA_M^G)^*$ with the quotient of the Haar measures on $i\cA_M^*$ and $i\cA_G^*$. Thus, by definition, for this measure the lattice $\frac{2i\pi}{\log(q)} X^*(M)/X^*(G)$ has covolume $1$.
		
		The restriction map $X^*(A_M)\to X^*(A_G)$ also induces a surjection $\cA_M^*\to \cA_G^*$ splitting the previous injection. Therefore, we have a canonical decomposition
		$$\displaystyle \cA_M^*=\cA_G^*\oplus (\cA_M^G)^*.$$

		When convenient, we will also assume that non-reductive groups are equipped with arbitrary Haar measures.
	\end{paragr}
	
	\begin{paragr}[Representations.]
		A {\em representation} of $G$ will always mean a smooth representation on a complex vector space. We will make the slight abuse of notation of denoting by the same letter a representation and the space on which it is realized. For a representation $\pi$ of $G$, we write $\pi^\vee$ for its (smooth) contragredient and, for any $\lambda\in \cA_G^*$, by $\pi_\lambda$ the twist of $\pi$ by the character $g\mapsto g^\lambda$. If moreover $\pi$ is irreducible, we denote by $\omega_\pi: A_G\to \bC^\times$ its central character.
		
		We will write $\Irr(G)$ for the set of isomorphism classes of irreducible representations of $G$.
		
		If $\pi$ is a representation of $G$ and $\theta$ an automorphism of $G$, we denote by $\pi^\theta$ the representation given by $\pi^\theta(g):=\pi(\theta(g))$, $g\in G$.
		
		When $P$ is a parabolic subgroup of $G$ with Levi decomposition $P=LN$ and $\sigma$ a representation of $L$, we denote by $\rI_P^G(\sigma)$ the normalised parabolic induction that is the space of smooth functions $e: G\to \sigma$ that satisfy $e(\ell u g)=\delta_P(\ell)^{1/2}\sigma(\ell)e(g)$ for every $(\ell,u,g)\in L\times N \times G$ and where $\delta_P$ denotes the modular character of $P$. If moreover $\sigma$ is unitarizable (e.g. tempered) any invariant scalar product $(.,.)$ induces one on the induction $\rI_P^G(\sigma)$ by the formula
		$$\displaystyle (e,e')=\int_{P\backslash G} (e(g),e'(g))dg,\;\; e,e'\in \rI_P^G(\sigma).$$
	\end{paragr}
	
	\begin{paragr}[Characters.]
	Let $C_c^\infty(G)$ be the space of functions $f:G\to \bC$ that are locally constant and of compact support. Let $\pi$ be an admissible representation of $G$ (for example a representation of finite length). For every $f\in C_c^\infty(G)$, we define an operator $\pi(f)$ on (the space of) $\pi$ by
	\begin{equation*}
	\displaystyle \langle \pi(f)v,v^\vee\rangle=\int_G f(g) \langle \pi(g)v,v^\vee\rangle dg,\;\; v\in \pi,v^\vee\in \pi^\vee.
	\end{equation*}
This operator is of finite rank and we set
$$\displaystyle \Theta_\pi(f)=\Tr \pi(f).$$
The map $f\mapsto \Theta_\pi(f)$ is the {\em distribution-character} of $\pi$.

Assume now that $\pi$ has a central character $\chi: A_G\to \bC^\times$. Let $C_c^\infty(G/A_G,\chi^{-1})$ be the space of locally constant functions $f:G\to \bC$ satisfying $f(ag)=\chi(a)^{-1}f(g)$ for every $(a,g)\in A_G\times G$ and whose support has compact image in $G/A_G$. Then, we can define as above an operator $\pi(f)$ for every $f\in C_c^\infty(G/A_G,\chi^{-1})$ (by simply replacing the integral over $G$ by an integral over $G/A_G$) and we again set $\Theta_\pi(f)=\Tr \pi(f)$.
	\end{paragr}
	
	\begin{paragr}[Discrete series and formal degrees.]
		We denote by $\Pi_2(G)$ the set of isomorphism classes of square-integrable representations of $G$, that is irreducible representations $\pi$ of $G$ such that $\omega_\pi$ is unitary and for every $v,w\in \pi$, $v^\vee,w^\vee\in \pi^\vee$ the integral
		$$\displaystyle \int_{A_G\backslash G} \langle \pi(g)v,v^\vee\rangle \langle w,\pi^\vee(g)w^\vee\rangle dg$$
		converges. Then, there exists $d(\pi)\in \bR_{>0}$ (the {\em formal degree} of $\pi$) such that the above expression equals
		$$\displaystyle \frac{\langle v,w^\vee\rangle \langle w,v^\vee\rangle}{d(\pi)}$$
		for every $v,w,v^\vee,w^\vee$. 
		
	\end{paragr}
	
	\begin{paragr}[Parabolic inductions of discrete series.]
		Let $M$ be a Levi subgroup of $G$. We set
		$$\displaystyle W(G,M)=\Norm_G(M)/M$$
		where $\Norm_G(M)$ stands for the normalizer of $M$ in $G$. Note that $W(G,M)$ acts by automorphisms on $M$.
		
		For $\sigma\in \Pi_2(M)$, we denote by $\rI_M^G(\sigma)$ the isomorphism class of the normalised parabolic induction $\rI_P^G(\sigma)$ where $P$ is a parabolic subgroup with Levi factor $M$ (this does not depend on the choice of $P$) and we let $W(G,\sigma)$ be the subgroup of those $w\in W(G,M)$ such that $\sigma^w\simeq \sigma$.
		
	Let $\Temp_{\ind}(G)$ be the set of $G$-conjugacy classes of pairs $(M,\sigma)$ where $M$ is a Levi subgroup and $\sigma\in \Pi_2(M)$. As is well-known, two such pairs $(M,\sigma)$, $(L,\tau)$ are $G$-conjugate if and only if $\rI_M^G(\sigma)\simeq \rI_{L}^G(\tau)$. We may thus identify $\Temp_{\ind}(G)$ with a set of isomorphism classes of representations of $G$ and we will henceforth always make this identification. We write $[M,\sigma]\in \Temp_{\ind}(G)$ for the $G$-conjugacy class of the pair $(M,\sigma)$.
	
	For every unitary character $\chi:A_G\to S^1$, we denote by $\Temp_{\ind,\chi}(G)$, the subset of those $\pi\in \Temp_{\ind}(G)$ whose central character equal $\chi$. (Note that, although there might be representations in $\Temp_{\ind}(G)$ that are not irreducible, they nonetheless all admit a central character.)
		
		When $G$ is a general linear group (such as the group denoted by $M$ in Sections \ref{Sect Spectral decomp} and \ref{Sect twisted endoscopy}), parabolic inductions of discrete series are all irreducible so that $\Temp_{\ind}(G)$ and $\Temp_{\ind,\chi}(G)$ consist of irreducible representations. In this case, we will drop the index $\ind$ and denote these sets by $\Temp(G)$ and $\Temp_\chi(G)$.
	\end{paragr}
	
	\begin{paragr}[Spectral measure.]\label{S spectral measure}
		We equip $\Temp_{\ind}(G)$ with the unique topology for which the connected components are the subsets of the form
		$$\displaystyle \cO=\{\rI_M^G(\sigma_\lambda)\mid \lambda\in i\cA_M^* \},$$
		for $M$ a Levi subgroup and $\sigma\in \Pi_2(M)$, and the topology induced on such a component is the quotient topology by the map 
		\begin{equation}\label{map quotientO}
		\displaystyle i\cA_M^*\to \cO, \lambda\mapsto \rI_M^G(\sigma_\lambda).
		\end{equation}
		
		We also denote by $d\pi$ the unique (Radon) measure on $\Temp_{\ind}(G)$ such that for every component $\cO$ as above,
		is locally measure-preserving in the following sense: for every $\varphi\in C_c(i\cA_M^*)$, the map \eqref{map quotientO} (which is locally finite) is locally measure preserving in the sense that the pullback of the restriction of $d\pi$ to $\cO$ by this map  coincides with the measure on $i\cA_M^*$ defined in \S \ref{S measures}.

		This measure can be explicitely described locally as follows: for every $[M,\sigma]\in \Temp_{\ind}(G)$, we can find a small enough $W(G,\sigma)$-invariant neighborhood of zero $\cU\subset i\cA_M^*$ such that for every continous compactly supported function $\varphi$ on $\{\rI_M^G(\sigma_\lambda)\mid \lambda\in \cU \}$, we have
		\begin{equation*}
		\displaystyle \int_{\Temp_{\ind}(G)} \varphi(\pi)d\pi=\frac{1}{\lvert W(G,\sigma)\rvert}\int_{\cU} \varphi(\rI_M^G(\sigma_\lambda)) d\lambda.
		\end{equation*}

		For $\chi:A_G\to S^1$ a unitary character, we define similarly a measure $d_{\chi}\pi$ on $\Temp_{\ind,\chi}(G)$ by requiring that for every $[M,\sigma]\in \Temp_{\ind,\chi}(G)$ the map
		$$\displaystyle i(\cA_M^G)^*\to \Temp_{\ind,\chi}(G), \;\; \lambda\mapsto \rI_M^G(\sigma_\lambda),$$
		is locally measure preserving. (And where the measure on $i(\cA_M^G)^*$ is the one fixed in \S \ref{S measures}).

	\begin{lem}\label{lem spectral measures}
	Let $\Unit(A_G)$ be the group of unitary characters of $A_G$ equipped with the measure $d\chi$ dual to the ($\psi$-)Haar measure on $A_G$. Then, for every $\varphi\in C_c(\Temp_{\ind}(G))$, we have
	\begin{equation*}
	\displaystyle \int_{\Temp_{\ind}(G)} \varphi(\pi)d\pi=\frac{\gamma^*(\mathbf{1}_F,\psi)^{-\dim(A_G)}}{[X^*(A_G):X^*(G)]}\int_{\Unit(A_G)} \int_{\Temp_{\ind,\chi}(G)} \varphi(\pi)\; d_\chi\pi\; d\chi.
	\end{equation*}
	\end{lem}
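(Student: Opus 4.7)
The plan is to verify the identity locally on $\Temp_{\ind}(G)$ by comparing the two explicit descriptions of $d\pi$ and $d_\chi\pi$ on small neighborhoods. Using a partition of unity, I would reduce to the case where $\varphi$ is supported near a single $[M,\sigma]$ inside a connected component $\cO=\{\rI_M^G(\sigma_\lambda):\lambda\in i\cA_M^*\}$ of $\Temp_{\ind}(G)$. Choosing a small $W(G,\sigma)$-invariant neighborhood $\cU\subset i\cA_M^*$ of $0$, the local formula recalled before the statement turns the left-hand side into $\frac{1}{|W(G,\sigma)|}\int_\cU \varphi(\rI_M^G(\sigma_\lambda))\,d\lambda$. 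Using the canonical splitting $i\cA_M^*=i\cA_G^*\oplus i(\cA_M^G)^*$ from \S\ref{S measures}, I would then write $\cU=\cU_G\times\cU'$ (this makes sense because $W(G,\sigma)\subset W(G,M)$ centralizes $A_G$ and hence acts trivially on $i\cA_G^*$) and express the left-hand side as an iterated integral in $(\lambda_G,\lambda')$.

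For the right-hand side, I observe that the central character of $\rI_M^G(\sigma_{\lambda_G+\lambda'})$ equals $\omega_\sigma|_{A_G}\cdot(\lambda_G|_{A_G})$ and depends on $\lambda_G$ alone; in particular the inner integrand vanishes unless $\chi$ lies in the coset $\omega_\sigma|_{A_G}\cdot\mathrm{Im}(e)\subset \Unit(A_G)$, where $e:i\cA_G^*\to \Unit(A_G)$, $\lambda_G\mapsto(a\mapsto a^{\lambda_G})$, is the exponential map. For $\cU_G$ small enough, each such $\chi$ has a unique preimage $\lambda_G(\chi)\in\cU_G$, and the analogous local formula for $d_\chi\pi$ gives $\int_{\Temp_{\ind,\chi}(G)}\varphi\,d_\chi\pi=\frac{1}{|W(G,\sigma)|}\int_{\cU'}\varphi(\rI_M^G(\sigma_{\lambda_G(\chi)+\lambda'}))\,d\lambda'$. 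The proof therefore reduces to comparing, on $\mathrm{Im}(e)$, the pushforward $e_*(d\lambda_G)$ with the restriction of $d\chi$.

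That comparison is the heart of the argument and combines two measure-theoretic facts. First, $\ker e=\frac{2\pi i}{\log q}X^*(A_G)$ contains the lattice $\frac{2\pi i}{\log q}X^*(G)$ with index $[X^*(A_G):X^*(G)]$; since the latter has $d\lambda_G$-covolume $1$ by the definition of the measure on $i\cA_G^*$, the pushforward $e_*(d\lambda_G)$ is a Haar measure on $\mathrm{Im}(e)$ of total mass $[X^*(A_G):X^*(G)]^{-1}$. Second, Pontryagin duality identifies $\mathrm{Im}(e)$ with $(A_G^1)^\perp$, so its $d\chi$-mass equals $\vol(A_G^1,da)^{-1}=\gamma^*(\mathbf{1}_F,\psi)^{\dim(A_G)}$ by the formula from \S\ref{S measures}. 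Putting these together and using translation invariance of $d\chi$ yields $d\chi|_{\mathrm{Im}(e)}=\gamma^*(\mathbf{1}_F,\psi)^{\dim(A_G)}[X^*(A_G):X^*(G)]\cdot e_*(d\lambda_G)$, from which the asserted identity is immediate. No individual step is difficult; the only real obstacle is the careful bookkeeping of lattice covolumes and Pontryagin-dual mass normalizations needed to obtain exactly the constant $\gamma^*(\mathbf{1}_F,\psi)^{-\dim(A_G)}/[X^*(A_G):X^*(G)]$.
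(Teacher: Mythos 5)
Your proposal is correct and follows essentially the same route as the paper: the paper likewise computes the Jacobian of the local homeomorphism $i\cA_G^*\to \Unit(A_G)$ as $\gamma^*(\mathbf{1}_F,\psi)^{\dim(A_G)}[X^*(A_G):X^*(G)]$ by combining the covolume of $\frac{2i\pi}{\log(q)}X^*(A_G)$ relative to $\frac{2i\pi}{\log(q)}X^*(G)$ with the Pontryagin-dual volume $\vol(A_G^1,da)^{-1}$ of the unramified character group, and then invokes the measure-preserving splitting $i\cA_M^*=i\cA_G^*\oplus i(\cA_M^G)^*$. Your write-up merely makes explicit the partition-of-unity reduction and the iterated-integral bookkeeping that the paper leaves as "readily follows from the definitions."
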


\begin{proof}
The neutral component of $\Unit(A_G)$ consists of the unitary unramified characters of $A_G$, and can be identified by the map $\lambda\mapsto (a\in A_G\mapsto a^\lambda)$ with the quotient $i\cA_G^*/\frac{2i\pi}{\log(q)}X^*(A_G)$. Since for the $\psi$-Haar measure on $A_G$ the maximal compact subgroup $A_G^1$ is of volume $\gamma^*(\mathbf{1}_F,\psi)^{-\dim(A_G)}$, the dual measure $d\chi$ gives $i\cA_G^*/\frac{2i\pi}{\log(q)}X^*(A_G)$ volume $\gamma^*(\mathbf{1}_F,\psi)^{+\dim(A_G)}$. Therefore, as the measure on $i\cA_G^*$ was chosen to give the lattice $\frac{2i\pi}{\log(q)}X^*(G)$ covolume $1$, we see that the local homeomorphism
$$\displaystyle i\cA_G^*\to \Unit(A_G),$$
has Jacobian $\gamma^*(\mathbf{1}_F,\psi)^{+\dim(A_G)}[X^*(A_G):X^*(G)]$.

From there, the lemma readily follows from the definitions of $d\pi$ and $d_\chi\pi$ since, for every Levi subgroup $M\subset G$, the isomorphism $i\cA_G^*\oplus i(\cA_M^G)^*=i\cA_M^*$ is measure-preserving.
\end{proof}
	\end{paragr}
	
	\begin{paragr}[Plancherel densities.]\label{S Plancherel}
		According to Harish-Chandra \cite{WaldPlanch}, there exists a unique continuous function $\mu_G$ on $\Temp_{\ind}(G)$ (the {\em Plancherel density} of $G$) such that for every $f\in C_c^\infty(G)$ we have
		\begin{equation}\label{Plancherel formula}
			\displaystyle f(1)=\int_{\Temp_{\ind}(G)} \Theta_\pi(f) \mu_G(\pi) d\pi,
		\end{equation}
		where $d\pi$ is the measure introduced in the previous paragraph.
		
		In a similar way, for every unitary character $\chi:A_G\to S^1$, there exists a unique continuous function $\mu_{G,\chi}$ on $\Temp_{\ind,\chi}(G)$ such that for every $f\in C_c^\infty(G/A_G, \chi^{-1})$ we have
		\begin{equation}\label{Plancherel formula2}
			\displaystyle f(1)=\int_{\Temp_{\ind,\chi}(G)} \Theta_\pi(f) \mu_{G,\chi}(\pi) d_\chi\pi.
		\end{equation}

The Plancherel density is related to formal degrees in the following way: for every $\pi\in \Pi_2(G)$ with central character $\chi=\omega_\pi$, we have
\begin{equation}\label{relation Planch density fd}
\displaystyle \mu_{G,\chi}(\pi)=d(\pi).
\end{equation}
Moreover, there is a simple relation between $\mu_G$ and $\mu_{G,\chi}$:
	
	\begin{lem}\label{lem Planch densities}
	For every unitary character $\chi: A_G\to S^1$ and every $\pi\in \Temp_{\ind, \chi}(G)$, we have
	\begin{equation*}
	\displaystyle \mu_{G,\chi}(\pi)=\frac{\gamma^*(\mathbf{1}_F,\psi)^{-\dim(A_G)}}{[X^*(A_G):X^*(G)]}\mu_G(\pi).
	\end{equation*}
	\end{lem}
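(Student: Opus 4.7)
The plan is to produce two different expressions for the value $f(1)$ of an arbitrary $f\in C_c^\infty(G)$ and to equate them. For each $\chi\in\Unit(A_G)$ introduce the $\chi$-average
$$f_\chi(g)=\int_{A_G}f(ag)\chi(a)\,da\in C_c^\infty(G/A_G,\chi^{-1}).$$
A direct Fubini computation, using $\pi(a)=\chi(a)\,\Id$ for $a\in A_G$ on the space of any $\pi\in\Temp_{\ind,\chi}(G)$, shows that $\pi(f)=\pi(f_\chi)$ and therefore $\Theta_\pi(f)=\Theta_\pi(f_\chi)$ whenever $\pi$ has central character $\chi$.

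Applying the Plancherel formula~\eqref{Plancherel formula} for $G$ and unfolding the outer integral via Lemma~\ref{lem spectral measures} gives a first expression
$$f(1)=c\int_{\Unit(A_G)}\int_{\Temp_{\ind,\chi}(G)}\Theta_\pi(f_\chi)\,\mu_G(\pi)\,d_\chi\pi\,d\chi,$$
where $c=\gamma^*(\mathbf{1}_F,\psi)^{-\dim(A_G)}/[X^*(A_G):X^*(G)]$. On the other hand, Fourier inversion on $A_G$ (valid precisely because $d\chi$ is chosen dual to the $\psi$-Haar measure on $A_G$) yields $f(1)=\int_{\Unit(A_G)}f_\chi(1)\,d\chi$, and applying the Plancherel formula~\eqref{Plancherel formula2} for $(G,\chi)$ to each $f_\chi$ produces the second expression
$$f(1)=\int_{\Unit(A_G)}\int_{\Temp_{\ind,\chi}(G)}\Theta_\pi(f_\chi)\,\mu_{G,\chi}(\pi)\,d_\chi\pi\,d\chi.$$
Equating the two gives, for every $f\in C_c^\infty(G)$,
$$\int_{\Unit(A_G)}\int_{\Temp_{\ind,\chi}(G)}\Theta_\pi(f_\chi)\bigl(\mu_{G,\chi}(\pi)-c\,\mu_G(\pi)\bigr)\,d_\chi\pi\,d\chi=0.$$

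To deduce the pointwise identity, I fix a character $\chi_0$ and construct, from any prescribed $h\in C_c^\infty(G/A_G,\chi_0^{-1})$, a sequence $f_n\in C_c^\infty(G)$ (via a local section $G/A_G\to G$ combined with an oscillatory cutoff on $A_G$) whose $\chi$-decompositions satisfy $(f_n)_{\chi_0}=h$ and whose outer $\chi$-mass concentrates near $\chi_0$ as $n\to\infty$. Invoking continuity of the inner integrand in $\chi$, the identity above then forces
$$\int_{\Temp_{\ind,\chi_0}(G)}\Theta_\pi(h)\bigl(\mu_{G,\chi_0}(\pi)-c\,\mu_G(\pi)\bigr)\,d_{\chi_0}\pi=0$$
for all $h$, and the uniqueness of the continuous Plancherel density for $(G,\chi_0)$ gives $\mu_{G,\chi_0}=c\,\mu_G$ on $\Temp_{\ind,\chi_0}(G)$.

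The most delicate point is the construction of this concentrating family. It can however be bypassed by working locally on each connected component $\cO=\{\rI_M^G(\sigma_\lambda)\mid\lambda\in i\cA_M^*\}$ of $\Temp_{\ind}(G)$: the canonical decomposition $i\cA_M^*=i\cA_G^*\oplus i(\cA_M^G)^*$ from \S\ref{S measures}, combined with the explicit local descriptions of $d\pi$ and $d_\chi\pi$ from \S\ref{S spectral measure}, reduces the lemma to matching Jacobian factors along slices of fixed central character, which directly produces the scalar $c$.
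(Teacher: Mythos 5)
Your argument is correct and is exactly the paper's route: the paper's proof consists of the single sentence ``This follows directly from Lemma \ref{lem spectral measures}'', and your computation of $f(1)$ in two ways (via \eqref{Plancherel formula} combined with Lemma \ref{lem spectral measures}, versus Fourier inversion on $A_G$ followed by \eqref{Plancherel formula2}) together with the uniqueness of $\mu_{G,\chi}$ is precisely the standard unwinding of that sentence. The separation-in-$\chi$ step you flag as delicate is genuinely needed but routine (e.g.\ convolve $f$ along $A_G$ by elements of $C_c^\infty(A_G)$, whose Fourier transforms are dense, and use continuity in $\chi$ of the inner integral), so the proof stands.
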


\begin{proof}
This follows directly from Lemma \ref{lem spectral measures}.
\end{proof}
	\end{paragr}

\begin{paragr}
We say of a function $\Phi: \Temp_{\ind}(G)\to \bC$	that it is $C^\infty$ if for every $[M,\sigma]\in \Temp_{\ind}(G)$ the map
$$\displaystyle i\cA_M^*\to \bC,\;\; \lambda\mapsto \Phi(\rI_M^G(\sigma_\lambda)),$$
is $C^\infty$ in the usual sense. We denote by $C_c^\infty(\Temp_{\ind}(G))$ the space of $C^\infty$ functions $\Temp_{\ind}(G)\to \bC$ that are compactly supported, that is that are supported on finitely many connected components of $\Temp_{\ind}(G)$ (all of which are compact).

For $\chi: A_G\to S^1$ a unitary character we define similarly the space $C_c^\infty(\Temp_{\ind,\chi}(G))$.
\end{paragr}

	\section{Spectral decomposition of certain twisted orbital integrals}\label{Sect Spectral decomp}
	
	\subsection{Linear group and twisted linear group}\label{sect twisted linear group}
	
\begin{paragr}
In this chapter we fix an integer $d\geq 1$ and a $d$-dimensional vector space $V$ over $F$. We denote by $V^*$ the dual vector space. For every subspace $U\subset V$, we denote by $U^\perp$ its orthogonal in $V^*$.

We set $M=\GL(V)$ and let $A$ be the center of $M$. We identify $A$ with $F^\times$ in the usual way; that is by the map $\lambda\in F^\times \mapsto \lambda \Id_V\in A$.
\end{paragr}

\begin{paragr}
By the local Langlands correspondence \cite{HTLLC} \cite{HennLLC}, to every irreducible representation $\pi$ of $M$ is associated a $L$-parameter, that is a representation (unique up to isomorphism)
$$\displaystyle \phi_\pi: W'_F\to \GL_d(\mathbb{C}),$$
where $W'_F=W_F\times \SL_2(\bC)$ is the Weil-Deligne group of $F$, which is continuous, algebraic on $\SL_2(\bC)$ and semi-simple.
\end{paragr}

\begin{paragr}[$\gamma$-factors.]
For every finite dimensional representation $\rho: W'_F\to \GL(E)$ of the Weil-Deligne group, that is continuous semi-simple and algebraic on $\SL_2(\bC)$, we denote by $\epsilon(s,\rho,\psi)$ and $L(s,\rho)$ the corresponding local Artin $\epsilon$- and $L$-factors as normalised e.g. in \cite[\S 2.2]{GrossReeder}. The $\epsilon$-factor $\epsilon(s,\rho,\psi)$ is a monomial in $q^{-s}$ and therefore is regular for every complex value of $s$. The resulting $\gamma$-factor is given by
$$\displaystyle \gamma(s,\rho,\psi)=\epsilon(s,\rho,\psi)\frac{L(1-s,\rho^\vee)}{L(s,\rho)},$$
where $\rho^\vee$ is the contragredient of $\rho$. In particular, for $\mathbf{1}_F$ the trivial representation of $W'_F$, we get
$$\displaystyle \gamma(s,\mathbf{1}_F,\psi)=q^{n(\psi)(1/2-s)}\frac{\zeta_F(1-s)}{\zeta_F(s)},$$
where we recall that $n(\psi)$ is the level of the additive character $\psi$ and $\zeta_F(s)=(1-q^{-s})^{-1}$ is the local zeta factor of the field $F$.

For $r:\GL_d(\bC)\to \GL_N(\bC)$ an algebraic representation and $\pi\in \Irr(G)$, we set
$$\displaystyle \epsilon(s,\pi,r,\psi)=\epsilon(s,r\circ \phi_\pi,\psi),\;\; L(s,\pi,r)=L(s,r\circ \phi_\pi),$$
$$\displaystyle \gamma(s,\pi,r,\psi)=\gamma(s,r\circ \phi_\pi,\psi)=\epsilon(s,\pi,r,\psi)\frac{L(1-s,\pi^\vee,r)}{L(s,\pi,r)}.$$
If $\pi\in \Temp(M)$ is tempered, the poles $s_0$ of the $L$-factor $L(s,\pi,r)$ satisfy $\Re(s_0)\leq 0$. In particular, $\gamma(s,\pi,r,\psi)$ is regular at $s=0$ and its vanishing order at this point is equal to the order of the pole of $L(s,\pi,r)$. Denoting by $n_{\pi,r}\geq 0$ this order of vanishing at $s=0$, we set
$$\displaystyle \gamma^*(\pi,r,\psi):=\left(\zeta_F(s)^{n_{\pi,r}} \gamma(s,\pi,r,\psi)\right)_{s=0}.$$

These definitions can in particular be applied to $r=\Sym^2$ and $\bigwedge^2$ the symmetric square and exterior square of the standard representation respectively.
\end{paragr}

\begin{paragr}[Plancherel densities.]
Let $\Ad_M$ be the adjoint representation of $\GL_d(\bC)$ on $\mathfrak{gl}_d(\bC)$. Then, as in \cite[Proposition 2.132]{BPPlanch} we can deduce from \cite{ShaGLn} and \cite[Theorem 2.1]{ILMfd} the following formula for the Plancherel density of $M$:
\begin{equation}\label{formula Planch GL}
\displaystyle \mu_M(\pi)=\omega_\pi(-1)^{d-1}\gamma^*(\pi,\Ad_{M},\psi),\;\; \mbox{ for almost all } \pi\in \Temp(M).
\end{equation}

Let $\chi: A\to S^1$ be a unitary character and $\Ad_{M/A}$ be the adjoint representations of $\GL_d(\bC)$ on $\mathfrak{sl}_d(\bC)$. By Lemma \ref{lem Planch densities}, \eqref{formula Planch GL} implies the following formula
\begin{equation}\label{formula muMchi}
\displaystyle \mu_{M,\chi}(\pi)=\chi(-1)^{d-1}\frac{\gamma^*(\pi,\Ad_{M/A},\psi)}{d}, \; \mbox{ for almost all } \pi\in \Temp_\chi(M).
\end{equation}
\end{paragr}

\begin{paragr}
Let $\Bil(V)$ be the vector space of all bilinear forms $B:V\times V\to F$. There are two commuting left and right actions of $M$ on $\Bil(V)$ given by $(mBm')(x,y)=B(m'x,m^{-1}y)$ for $B\in \Bil(V)$, $x,y\in V$, $m,m'\in M$. This in particular induces the conjugation action $(m,B)\in M\times \Bil(V)\mapsto mBm^{-1}$ that will also be denoted by $(m,B)\mapsto \Ad(m)B$.
	
We let $\Alt(V)$ and $\Sym(V)$ be the subspaces of $\Bil(V)$ consisting of alternating and symmetric forms respectively. Then, any bilinear form $B\in \Bil(V)$ admits a unique decomposition $2B=B^s+B^a$ with $B^s\in \Sym(V)$ and $B^a\in \Alt(V)$. Explicitely:
$$\displaystyle B^s(x,y)=B(x,y)+B(y,x),\; B^a(x,y)=B(x,y)-B(y,x),\;\; \mbox{ for } x,y\in V.$$

We denote by $\tM:=\Bil^*(V)\subset \Bil(V)$ the Zariski open subset of nondegenerate bilinear forms. Then, both the left and the right actions make $\tM$ into a $M$-torsor i.e. the pair $(M,\tM)$ is a {\em twisted space} in the sense of Labesse \cite[Chapitre 2]{LabWal}.
\end{paragr}
	
\subsection{Representations of orthogonal and symplectic types}\label{Sect ortho reps}

\begin{paragr}
We say that an irreducible representation $\pi$ of $M$ is of {\em orthogonal type} if, up to conjugacy, its Langlands parameter $\phi_\pi$ factors through the orthogonal group $\rO_d(\bC)$. The notion of irreducible representation of {\em symplectic type} is defined similarly: $\pi$ is of symplectic type if $d$ is even and, up to conjugacy, $\phi_\pi$ factors through the symplectic group $\Sp_d(\bC)$. Clearly, $\pi$ is of orthogonal (resp. symplectic type) if and only if there exists a non-degenerate symme tric (resp. alternating) form on $\bC^d$ that is fixed by $\phi_\pi$.
\end{paragr}

\begin{paragr}\label{S def Spi+}
Let $\pi\in \Irr(M)$ be of orthogonal type and choose a representative $\phi_\pi$ of its $L$-parameter that factors through $\rO_d(\bC)$. We let
$$\displaystyle S_\pi^{+}=\pi_0(Z_{\rO_d(\bC)}(\phi_\pi))$$
be the component group of the centralizer of $\phi_\pi$ in $\rO_d(\bC)$. It is a $2$-torsion group whose isomorphism class does not depend on the choice of $\phi_\pi$ (inside its conjugacy class).
\end{paragr}

\begin{paragr}
A square-integrable representation $\pi\in \Pi_2(M)$ is of orthogonal type (resp. symplectic type) if and only if the $L$-factor $L(s,\pi,\Sym^2)$ (resp. $L(s,\pi,\bigwedge^2)$) has a pole at $s=0$ or, equivalently, if the gamma factor $\gamma(s,\pi,\Sym^2,\psi)$ (resp. $\gamma(s,\pi,\bigwedge^2,\psi)$) vanishes at $s=0$. Moreover, these two conditions are exclusive of each other: a square-integrable representation cannot be at the same time of orthogonal and symplectic type.
\end{paragr}

\begin{paragr}
Let $\chi: A=F^\times\to \{\pm 1 \}$ be a quadratic character. We denote by $\Temp^{\orth}(M)$ (resp. $\Temp_\chi^{\orth}(M)$) the subset of isomorphism classes of $\pi\in \Temp(M)$ (resp. $\pi\in \Temp_\chi(M)$) that are of orthogonal type. We also set $\Pi_2^{\orth}(M)=\Pi_2(M)\cap \Temp^{\orth}(M)$.

Let $L\subset M$ be a Levi subgroup and $w\in W(M,L)$ be an involution. Then, there exists a representative $\dot{w}\in \Norm_M(L)$ as well as a decomposition
$$\displaystyle L\simeq \prod_{i\in I} (\GL_{n_i}(F)\times \GL_{n_i}(F))\times \prod_{j\in J} \GL_{m_j}(F)$$
through which the action of $\Ad(\dot{w})$ is given by
$$\displaystyle \left((g_i^1,g_i^2)_{i\in I},(g_j)_{j\in J} \right)\mapsto \left((g_i^2,g_i^1)_{i\in I},(g_j)_{j\in J} \right).$$
We let $\Pi_2^{\orth,w}(L)$ be the subset of those $\sigma\in \Pi_2(L)$ that can be decomposed as tensor products
$$\displaystyle \sigma\simeq \bigboxtimes_{i\in I} \rho_i\boxtimes \rho_i^\vee \boxtimes \bigboxtimes_{j\in J} \tau_j$$
where $\rho_i\in \Pi_2(\GL_{n_i})$ for every $i\in I$, $\tau_j\in \Pi_2^{\orth}(\GL_{m_j})$ for every $j\in J$ and moreover the $\tau_j$ are two-by-two distinct. Note in particular that for every $\sigma\in \Pi_2^{\orth,w}(L)$ we have $\sigma^w\simeq \sigma^\vee$. For every $\sigma\in \Pi_2^{\orth,w}(L)$ the induced representation $\pi:=\rI_L^M(\sigma)$ belongs to $\Temp^{\orth}(M)$ and we have an isomorphism
$$\displaystyle S_\pi^{+}\simeq (\bZ/2\bZ)^J.$$
This construction induces a bijection between $\Temp^{\orth}(M)$ and the set of $M$-conjugacy classes of triples $(L,w,\sigma)$, where $L\subset M$ is a Levi subgroup, $w\in W(M,L)$ is an involution and $\sigma\in \Pi_2^{\orth,w}(L)$.
\end{paragr}

\begin{paragr}[Spectral measure.]\label{S orth spectral measure}
Let $L\subset M$ be a Levi subgroup and $w\in W(M,L)$ be an involution as before. We denote by $\cA_{L,w}^*$ the subspace of $\cA_L^*$ on which $\Ad(w)$ acts by $-1$. Note that $\Pi_2^{\orth,w}(L)$ is stable by unramified twists from $i\cA_{L,w}^*$. Let $\sigma\in \Pi_2^{\orth,w}(L)$, $W(M,\sigma)$ be the stabilizer of the isomorphism class of $\sigma$ in $W(M,L)$ and $W(M,\sigma)_w$ be the centralizer of $w$ in $W(M,\sigma)$. (Note that, as $\sigma^w\simeq \sigma^\vee$, $w$ normalizes $W(M,\sigma)$.) Then, for any sufficiently small $W(M,\sigma)_w$-invariant open neighborhood $\cU$ of $0$ in $i\cA_{L,w}^*$, the map $\lambda\mapsto \rI_L^M(\sigma_\lambda)$ induces a bijection between $\cU/W(M,\sigma)_w$ and a subset of $\Temp^{\orth}(M)$. We equip $\Temp^{\orth}(M)$ with the unique topology such that these bijections are actually local homeomorphisms (for every triple $(L,w,\sigma)$ as before). Let us endow $\cA_{L,w}^*$ with the unique Haar measure giving $\cA_{L,w}^*/\frac{2i\pi}{\log(q)}X^*(L)_w$ volume $1$, where we have set $X^*(L)_w=X^*(L)\cap \cA_{L,w}^*$, and $\cU/W(M,\sigma)_w$ with the pushforward of this measure divided by $\lvert W(M,\sigma)_w\rvert$. Then, there exists a unique Radon measure $d^{\orth}\pi$ on $\Temp^{\orth}(M)$ such that the previous embeddings $\cU/W(M,\sigma)_w\hookrightarrow \Temp^{\orth}(M)$ are all measure preserving. We equip $\Temp^{\orth}_\chi(M)$ (which is an open-closed subset of $\Temp^{\orth}(M)$) with the restriction of this measure.

The following proposition, whose proof is essentially a variation of that of \cite[Proposition 3.41]{BPPlanch}, will play a crucial role in this paper but we prefer to postpone its rather computational proof to the appendix.

\begin{prop}\label{prop1 spectral limit}
	Let $\Phi\in C_c^\infty(\Temp_\chi(M))$. Then, we have
	\[\begin{aligned}
		\displaystyle & \lim\limits_{s\to 0^+} d\gamma(s,\mathbf{1}_F,\psi)\int_{\Temp_\chi(M)} \Phi(\pi) \gamma(s,\pi,\Sym^2,\psi)^{-1} \mu_{M,\chi}(\pi)d_\chi\pi= \\
		& 2\chi(-1)^{d-1}\int_{\Temp^{\orth}_\chi(M)} \Phi(\pi) \frac{\gamma^*(0,\pi,\wedge^2,\psi)}{\lvert S_\pi^{+}\rvert} d^{\orth}\pi.
	\end{aligned}\]
\end{prop}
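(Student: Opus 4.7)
The strategy, modeled on \cite[Proposition 3.41]{BPPlanch}, is to substitute the explicit Plancherel density formula \eqref{formula muMchi} and extract the limit as a residue along the orthogonal-type stratum in $\Temp_\chi(M)$. After writing $\mu_{M,\chi}(\pi) = \chi(-1)^{d-1} d^{-1} \gamma^*(\pi, \Ad_{M/A}, \psi)$, the factor $d$ cancels and the sign matches the right-hand side. The driving identity is that on the orthogonal locus $\phi_\pi \simeq \phi_\pi^\vee$, so $\Ad_M \circ \phi_\pi \simeq \Sym^2 \phi_\pi \oplus \wedge^2 \phi_\pi$; combined with $\Ad_M = \mathbf{1} \oplus \Ad_{M/A}$ this gives
$$\gamma(s,\mathbf{1}_F,\psi)\,\gamma(s,\pi,\Sym^2,\psi)^{-1}\,\gamma^*(\pi,\Ad_{M/A},\psi) \longrightarrow \gamma^*(0,\pi,\wedge^2,\psi)$$
as $s \to 0^+$ (after zeta-regularization), transferring the $\Sym^2$-data into the desired $\wedge^2$-data.

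Next, I would decompose $\Temp_\chi(M) = \bigsqcup_{[L,\sigma_0]} \cO_{[L,\sigma_0]}$, write $\sigma_0 = \bigboxtimes_i \rho_i$ in its block decomposition, and use the factorization
$$\gamma(s,\rI_L^M(\sigma_{0,\lambda}),\Sym^2,\psi) = \prod_i \gamma(s,\rho_{i,\lambda_i},\Sym^2,\psi)\cdot \prod_{i<j} \gamma(s,\rho_{i,\lambda_i}\times \rho_{j,\lambda_j},\psi)$$
to locate the zero set at $s = 0$ as a union of affine subspaces of $i(\cA_L^M)^*$ arising from (i) orthogonal blocks $\rho_i$ with $\lambda_i$ at an order-two unramified twist, and (ii) self-dual pairings $\rho_{i,\lambda_i} \simeq \rho_{j,\lambda_j}^\vee$. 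These subspaces are precisely the orthogonal sub-strata parametrized by triples $(L, w, \sigma)$ with $\sigma \in \Pi_2^{\orth, w}(L)$ as in \S \ref{Sect ortho reps}.

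A residue / Plemelj-Sokhotski type calculation then extracts the limit: $\gamma(s, \pi, \Sym^2, \psi)^{-1}$ develops simple poles approaching each orthogonal sub-stratum, while $\gamma(s, \mathbf{1}_F, \psi) \approx (1 - q^{-s}) \gamma^*(\mathbf{1}_F, \psi)$ provides a compensating linear vanishing; integrating out the transverse direction $i(\cA_L^M)^*/i\cA_{L,w}^*$ yields the integral over $\Temp^{\orth}_\chi(M)$ against $d^{\orth}\pi$. The combinatorial factor $|S_\pi^+| = 2^{|J|}$ appears as the index of $W(M, \sigma)_w$ inside the stabilizer on the orthogonal stratum, reflecting the $2^{|J|}$ orientations of the orthogonal-type blocks $\tau_j$ of $\sigma$, while the overall factor of $2$ in the statement emerges from the normalization of the transverse measure.

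The main obstacle is this residue calculation: juggling the measure normalizations on $i\cA_L^*$, $i(\cA_L^M)^*$, $i\cA_{L,w}^*$, and their distinguished sublattices, while correctly computing the Jacobian of the transverse projection and the combinatorics of the stabilizer-group reductions from $W(M, \sigma_0)$ to $W(M, \sigma)_w$. This parallels \cite[Proposition 3.41]{BPPlanch}, which carries out the analogous calculation in the Hermitian/anti-Hermitian dichotomy; here one redoes it with the $\Sym^2$/$\wedge^2$ dichotomy. A secondary technical point is ensuring uniform meromorphic control to justify the interchange of the $s \to 0^+$ limit with the integration on each component of $\Temp_\chi(M)$.
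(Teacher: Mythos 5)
Your plan is correct and follows essentially the same route as the paper's proof: substitute the explicit Plancherel density \eqref{formula muMchi}, localize via a partition of unity to a component $\rI_L^M(\sigma_\lambda)$, factor $\gamma(s,\pi_\lambda,\Sym^2,\psi)^{-1}$ and $\gamma^*(0,\pi_\lambda,\Ad_{M/A},\psi)$ by inductivity into explicit linear factors, and reduce the residue extraction to the computation already done in \cite[Proposition 3.31]{BPPlanch}, with the conversion to $\wedge^2$ coming from $\gamma^*(0,\mathbf{1}_F,\psi)\gamma^*(0,\pi,\Ad_{M/A},\psi)=\gamma^*(0,\pi,\Sym^2,\psi)\gamma^*(0,\pi,\wedge^2,\psi)$. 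The remaining work you flag (Jacobians of the identification of the transverse/invariant subspaces with $i\cA_{L,w}^*$, the reduction $W(M,\sigma)\rightsquigarrow W(M,\sigma)_w$, and $\lvert S_\pi^{+}\rvert=2^{c}$ with $c$ the number of orthogonal blocks of odd multiplicity) is exactly the bookkeeping the paper carries out.
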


\begin{rem}\label{rmk}
We will actually need to apply Proposition \ref{prop1 spectral limit} to a family of functions $\Phi_s\in C_c^\infty(\Temp_\chi(M))$ that also varies with $s$ in a suitably continuous way. This slightly enhanced form of the proposition can actually be deduced directly from it by the uniform boundedness (aka Banach-Steinhaus) theorem. More specifically, the space $C_c^\infty(\Temp_\chi(M))$ carries a natural topology of locally Fr\'echet space (actually a direct sum of such); it is the direct sum over connected components $\cO$ of $\Temp_{\chi}(M)$ of spaces $C^\infty(\cO)$ that can be each realized as a closed subspace of $C^\infty(i\cA_L^*/\frac{2i\pi}{\log(q)} X^*(L))$ for some Levi $L\subset M$ and the later space has a natural structure of Fr\'echet space ($i\cA_L^*/\frac{2i\pi}{\log(q)} X^*(L)$ being a compact torus). Moreover, for every $s\in \bC$ with $\Re(s)>0$, the linear form
$$\Phi\in C_c^\infty(\Temp_{\chi}(M))\mapsto \int_{\Temp_\chi(M)} \Phi(\pi) \gamma(s,\pi,\Sym^2,\psi)^{-1} \mu_{M,\chi}(\pi)d_\chi\pi$$
is readily seen to be continuous for that topology. Therefore, by the uniform boundedness principle, if $s\mapsto \Phi_s\in C_c^\infty(\Temp_\chi(M))$ is a continuous map defined in a neighborhood of $0\in \bC$, the proposition still holds with $\Phi$ replaced by $\Phi_s$, that is:
\[\begin{aligned}
	\displaystyle & \lim\limits_{s\to 0^+} d\gamma(s,\mathbf{1}_F,\psi)\int_{\Temp_\chi(M)} \Phi_s(\pi) \gamma(s,\pi,\Sym^2,\psi)^{-1} \mu_{M,\chi}(\pi)d_\chi\pi= \\
	& 2\chi(-1)^{d-1}\int_{\Temp^{\orth}_\chi(M)} \Phi_0(\pi) \frac{\gamma^*(0,\pi,\wedge^2,\psi)}{\lvert S_\pi^{+}\rvert} d^{\orth}\pi.
\end{aligned}\]
Finally, we note here a simple criterion to check that a map $s\in U\mapsto \Phi_s\in \Temp_\chi(M)$, where $U\subset \bC$ is open, is continuous: it suffices that the map satisfies the two following conditions
\begin{itemize}
	\item there exists a finite set of connected components $\cO_1,\ldots,\cO_k\subset \Temp_\chi(M)$ such that $\Phi_s(\pi)=0$, for every $s\in U$ and $\pi\in \Temp_\chi(M)\setminus (\cO_1\cup\ldots\cup \cO_k)$;
	
	\item for every pair $(L,\sigma)$, with $L\subset M$ a Levi subgroup and $\sigma\in \Pi_2(L)$, whose central character is equal $\chi$ on $A$, the function
	$$\displaystyle U\times i(\cA_L^M)^*\to \bC,\;\; (s,\lambda)\mapsto \Phi_s(\rI_L^M(\sigma_\lambda)),$$
	is $C^\infty$.
\end{itemize} 
\end{rem}
\end{paragr}

\begin{paragr}
Every $\pi\in \Temp^{\orth}(M)$ is self-dual and can thus be extended to a {\em unitary twisted representation} $\tpi$ of $\tM$ i.e. a map
$$\displaystyle \tpi: \tM\to \GL(V_\pi),$$
where $V_\pi$ denotes the space of the representation $\pi$, satisfying
$$\displaystyle \tpi(g\tm g')=\pi(g)\tpi(\tm)\pi(g'), \mbox{ for every } (g,\tm,g')\in M\times \tM\times M,$$
and whose image preserves an inner product on $V_\pi$. Such an extension is unique up to multiplication by a scalar $z\in \mathbb{S}^1$ of module $1$. We will fix such an extension for every $\pi\in \Temp^{\orth}(M)$ henceforth.
\end{paragr}

\subsection{On certain twisted conjugacy classes}	
	
	For every $t\in F^\times$, we choose an element $\gamma_t\in \tM$ such that the quadratic form $\gamma_t^s$ is of rank $1$ and isomorphic to $x\mapsto tx^2$. We also fix an alternating form $\gamma_0\in \Alt(V)$ of maximal rank, that is of rank $d$, if $d$ is even, and of rank $d-1$, if $d$ is odd. In particular, note that $\gamma_0\in \tM$ if and only if $d$ is even.

	\begin{lem}
		\begin{enumerate}[(i)]
			\item The alternating form $\gamma_t^a$ is of maximal rank, i.e.\ it is $M$-conjugate to $\gamma_0$;
			
			\item The $M$-conjugacy class of $\gamma_t$ only depends on the square class of $t$, that is of its image in $F^\times/F^{\times,2}$;
			
			\item The conjugacy class $\Ad(M)\gamma_{t}$ of $\gamma_t$ carries a (unique up to a scalar) $\Ad(M)$-invariant measure and for every $a\in A=F^\times$, the map $\tilde{m}\mapsto \tilde{m}a^2$ induces a measure-preserving self-homeomorphism of $\Ad(M)\gamma_t$;
			
			\item The closure (for the analytic topology)  $\tM^\sharp$ of $\Ad(M)\gamma_{-1}$ in $\tM$ satisfies
			$$\displaystyle \tM^\sharp=\left\{\begin{array}{ll}
				\Ad(M)\gamma_{-1} & \mbox{ if } d \mbox{ is odd}; \\
				\Ad(M)\gamma_{-1}\sqcup \Ad(M)\gamma_0 & \mbox{ if } d \mbox{ is even.}
			\end{array} \right.$$
		\end{enumerate}
	\end{lem}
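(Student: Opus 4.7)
The plan is to treat the four assertions in order; (ii) is the main technical point, while (iii) and (iv) reduce to essentially explicit observations once (ii) is in hand.

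\textbf{Part (i).} Set $V_0 = \mathrm{rad}(\gamma_t^s)$, a hyperplane since $\gamma_t^s$ has rank one, and $V_a = \mathrm{rad}(\gamma_t^a)$. For $x \in V_0 \cap V_a$ we have $2 \gamma_t(x, \cdot) = \gamma_t^s(x, \cdot) + \gamma_t^a(x, \cdot) = 0$, so nondegeneracy of $\gamma_t$ gives $V_0 \cap V_a = 0$ and hence $\dim V_a \leq 1$. Alternating forms over $F$ have even rank, so $\dim V_a \equiv d \pmod{2}$; combined with $\dim V_a \leq 1$, this forces $\dim V_a = 0$ if $d$ is even and $\dim V_a = 1$ if $d$ is odd. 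In both cases $\gamma_t^a$ has maximal rank, and since alternating forms on $V$ of the same rank are $\GL(V)$-equivalent, $\gamma_t^a$ is $M$-conjugate to $\gamma_0$.

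\textbf{Part (ii).} The square class of $t$ is invariant under $M$-conjugation because $\gamma_t^s$ is transported to another rank-one symmetric bilinear form whose isomorphism class is determined by the square class of its nonzero coefficient. Conversely, given $\gamma \in \tM$ with $\gamma^s$ of rank one and type $[t]$, I exhibit a normal form. Write $V_0 = \mathrm{rad}(\gamma^s)$ and let $V_{00}$ be the radical of $\gamma^a|_{V_0 \times V_0}$ (a parity count shows $\dim V_{00} = 0$ for $d$ odd, $\dim V_{00} = 1$ for $d$ even). Pick $e \notin V_0$ and rescale so that $\gamma^s(e,e)$ represents $[t]$ (possible since $\gamma^s(\lambda e, \lambda e) = \lambda^2 \gamma^s(e,e)$); extend by a $\gamma^a$-symplectic basis of a complement $V_1 \subset V_0$ to $V_{00}$, together with (for $d$ even) a generator of $V_{00}$ paired to $e$ via $\gamma^a$ by a Witt-type extension argument. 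The resulting matrix of $\gamma$ depends only on $d$ and $[t]$, and two such forms differ by a change of basis, supplying the required element of $M$.

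\textbf{Part (iii).} Identifying $\Ad(M)\gamma_t$ with the homogeneous space $M / \Stab_M(\gamma_t)$, existence and uniqueness up to scalar of an $M$-invariant Radon measure is standard: $M = \GL(V)$ is unimodular, and the stabilizer $\Stab_M(\gamma_t) = \rO(\gamma_t^s) \cap \Sp(\gamma_t^a)$ is visibly an extension of a reductive group by a unipotent radical in the normal form, hence unimodular. For the second assertion, the key identity is
\begin{equation*}
	\tilde{m} \cdot a^2 \;=\; (a^{-1}\Id_V)\, \tilde{m}\, (a \Id_V),
\end{equation*}
valid since both sides evaluate to $\tilde{m}(ax, ay) = a^2 \tilde{m}(x, y)$. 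Hence right multiplication by $a^2$ coincides on $\tM$ with conjugation by $a^{-1}\Id_V \in M$, which preserves both the orbit $\Ad(M)\gamma_t$ and its invariant measure.

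\textbf{Part (iv) and main obstacle.} For $\Ad(M)\gamma_0 \subset \tM^\sharp$ when $d$ is even: in the normal form from (ii), the matrix of $\gamma_{-1}$ is block-diagonal of the shape $\diag(J_{d-2}, K)$ with $J_{d-2}$ a nondegenerate alternating block and $K$ a $2 \times 2$ nondegenerate block whose $(2,2)$-entry lies in $[-1]$ and whose $(1,1)$-entry vanishes; conjugation by $\diag(I_{d-2}, \lambda, \lambda^{-1}) \in M$ stays in $\Ad(M)\gamma_{-1}$ by (ii) and rescales the $(2,2)$-entry of $K$ by $\lambda^2$, so as $|\lambda| \to 0$ the limit is block-diagonal with an alternating $2 \times 2$ block, hence in $\Ad(M)\gamma_0$. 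For the reverse inclusion, let $\widetilde{\gamma} \in \tM$ be a limit of $m_n \gamma_{-1} m_n^{-1}$; lower semi-continuity of rank yields $\mathrm{rank}(\widetilde{\gamma}^s) \leq 1$. If the rank is $1$, picking $v$ with $\widetilde{\gamma}^s(v,v) \neq 0$, the sequence $\gamma_{-1}^s(m_n^{-1}v, m_n^{-1}v)$ takes values in the square class $[-1]$ and converges to $\widetilde{\gamma}^s(v,v)$; discreteness of $F^\times/F^{\times,2}$ forces $\widetilde{\gamma}^s(v,v) \in [-1]$, and (ii) gives $\widetilde{\gamma} \in \Ad(M)\gamma_{-1}$. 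If the rank is $0$, then $\widetilde{\gamma}$ is a nondegenerate alternating form, which requires $d$ even and places $\widetilde{\gamma}$ in $\Ad(M)\gamma_0$; this case is impossible for $d$ odd, so $\Ad(M)\gamma_{-1}$ is already closed in that case. The main obstacle is (ii): producing the simultaneous Witt-type normal form for a pair (rank-one symmetric, maximal-rank alternating) compatible with the nondegeneracy of the sum requires a careful case analysis by the parity of $d$.
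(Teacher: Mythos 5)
Your proof is correct and, for parts (i), (iii) and the degeneration half of (iv), follows essentially the same path as the paper. The two places where you diverge are worth comparing. For (ii), the paper first conjugates so that $\gamma_t^a=\gamma_0$ and then reduces everything to the transitivity of the stabilizer $M_{\gamma_0}$ (a symplectic group when $d$ is even, its analogue for a maximal-rank degenerate alternating form when $d$ is odd) on the set of linear forms $\ell\in V^*$ not vanishing on $\Ker(\gamma_0)$, after writing the rank-one symmetric part as $t\,\ell\otimes\ell$; you instead build a simultaneous normal basis for the pair $(\gamma^s,\gamma^a)$. Both are Witt-type arguments; yours is longer but has the advantage of directly yielding the classification statement (any nondegenerate $\gamma$ with $\rk(\gamma^s)=1$ of type $[t]$ is conjugate to $\gamma_t$) that you then correctly invoke in (iv), while the paper's is shorter because it delegates the linear algebra to the known transitivity of the symplectic group on $V^*\setminus\{0\}$. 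For the closedness half of (iv), the paper exhibits $\Ad(M)\gamma_{-1}\sqcup\Ad(M)\gamma_0$ (resp.\ $\Ad(M)\gamma_{-1}$ for $d$ odd) as the locus cut out by the closed conditions $\rk(\gamma^s)\le 1$ together with a fixed value of the locally constant discriminant map $\tM\to F^\times/F^{\times,2}$; your sequential argument via the openness of $F^{\times,2}$ in $F^\times$ is the same mechanism in different clothing and is equally valid.

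One justification you give is wrong as stated, although the conclusion survives. In (iii) you assert that $\Stab_M(\gamma_t)$ is unimodular because it is ``an extension of a reductive group by a unipotent radical.'' That principle is false in general: a Borel subgroup of $\GL_2$ is such an extension and is not unimodular. The correct general fact, which the paper invokes, is that the centralizer of any element of a twisted reductive space is unimodular; alternatively one can verify unimodularity directly from your normal form, but that requires computing the action of the reductive quotient on the unipotent radical and checking that the modular character is trivial, not merely observing the shape of the group.
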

	
	\begin{proof}
		\begin{enumerate}[(i)]
			\item is clear since alternating forms are of even rank and $\gamma_t$ is non-degenerate.
			
			\item Since $M$ acts transitively on the set of alternating forms of maximal rank, it suffices to check that $M_{\gamma_0}$ (the stabilizer of $\gamma_0$ in $M$) acts transitively on the set of quadratic forms $q\in \Sym(V)$ that are of rank $1$, isomorphic to $x\mapsto tx^2$ and such that $\gamma_0+q$ is nondegenerate. Such a quadratic form is of the form $q(v,v')=t\ell(v)\ell(v')$ where $\ell\in V^*$ is nonzero on the kernel $\Ker(\gamma_0)$ of the alternating form $\gamma_0$, that is $\ell\notin \Ker(\gamma_0)^\perp$. In the case where $d$ is even, $\Ker(\gamma_0)^\perp=V^*$ and $M_{\gamma_0}\subset M$ is a symplectic group which is well-known to act transitively on $V^*-\{0 \}$. When $d$ is odd, $L=\Ker(\gamma_0)$ is a line and $M_{\gamma_0}$ consists of those elements $g\in M$ preserving $L$ as well as the symplectic form induced by $\gamma_0$ on $V/L$. It is then easy to check that this subgroup acts transitively on $V^*-L=V^*- \Ker(\gamma_0)^\perp$.
			
			\item The existence of an $\Ad(M)$-invariant measure on $\Ad(M)\gamma_{t}$ is equivalent to the stabilizer $M_{\gamma_t}$ being unimodular which can either be checked directly in the case at hand but holds more generally for every conjugacy class in a twisted space. The second part of the statement just follows from noting that $a^{-1}\tilde{m}a=\tilde{m}a^2$ for every $a\in A$ and $\tilde{m}\in \tM$.
			
			\item Denote by $\rk(B)$ the rank of a bilinear form $B\in \Bil(V)$, that is the rank of the corresponding linear map $V\to V^*$. We readily check that for every integer $k\geq 0$, the subset of those $B\in \Bil(V)$ with $\rk(B^a)\leq k$ (resp. with $\rk(B^s)\leq k$) is closed. Moreover, the discriminant map
			$$\disc: \tM\to F^\times/F^{\times,2},$$
			sending $\gamma\in \tM$ to the square class of the determinant $\det(\gamma(v_i,v_j))_{1\leq i,j\leq d}$ for any basis $(v_i)_{i=1}^d$ of $V$, is continuous. This already implies that $\Ad(M)\gamma_{-1}$ is closed in $\tM$ when $d$ is odd since it coincides with the subset of those $\gamma\in \tM$ with $\rk(\gamma^s)\leq 1$ and $\disc(\gamma)=(-1)^{(d+1)/2}$. Assume now that $d$ is even. We see similarly that $\Ad(M)\gamma_{-1}\sqcup \Ad(M)\gamma_0$ is closed in $\tM$ as it coincides with the subset of those $\gamma\in \tM$ with $\rk(\gamma^s)\leq 1$ and $\disc(\gamma)=(-1)^{d/2}$. To conclude, it suffices to see that $\tM^\sharp$ contains an alternating form, and therefore the whole of $\Ad(M)\gamma_0$. Up to conjugacy, we may assume that $\gamma_{-1}^a=\gamma_0$. Then, $\gamma_{-1}^s$ is of the form $(v,v')\mapsto -\ell(v)\ell(v')$ where $\ell\in V^*\setminus \Ker(\gamma_0)^{\perp}$. Let $L=\Ker(\gamma_0)$ and $H$ be the kernel of $\ell$. In particular we have the decomposition $V=L\oplus H$ where $L$ is a line and $H$ an hyperplane. For every $\lambda\in F^\times$, let us denote by $a_\lambda$ the unique element of $M=\GL(V)$ that acts by the identity on $H$ and $\lambda$ times the identity on $L$. Then, we have $a_{\lambda}^{-1}\gamma_0a_\lambda=\gamma_0$ and $a_{\lambda}^{-1}\gamma_{-1}^sa_\lambda=\lambda^2\gamma_{-1}^s$ for every $\lambda\in F^\times$. In particular, we have
			$$\lim\limits_{\lambda\to 0} a_{\lambda}^{-1}\gamma_{-1}a_\lambda=\gamma_0\in \tM^\sharp.$$
		\end{enumerate}
	\end{proof}
	
	
	We henceforth fix $\Ad(M)$-invariant measures $d_t \tm$ on $\Ad(M)\gamma_t$ for every $t\in F^\times$ such that for every $a\in F^\times$ the homeomorphism $\Ad(M)\gamma_t\simeq \Ad(M) \gamma_{at}$, $\tm\mapsto \tm a$, preserves the chosen measures. (This is clearly possible by the above lemma.) We denote by
	$$\displaystyle O(\gamma_t,f)=\int_{\Ad(M)\gamma_t} f(\tm) d_t\tm,\;\; f\in C_c^\infty(\tM),\; t\in F^\times/F^{\times,2},$$
	the corresponding orbital integrals. When $d$ is even, the orbit $\Ad(M)\gamma_t$ isn't closed in $\tM$ and thus the convergence of the above orbital integrals is not completely formal, but follows from Rao's convergence theorem for unipotent orbital integrals \cite{Rao}.
	
	For every quadratic character $\chi: F^\times \to \{\pm 1 \}$ and $f\in C_c^\infty(\tM)$, we also set
	\begin{equation}\label{def Ichi}
		\displaystyle \rI_{\chi}(f)=\sum_{t\in F^\times/F^{\times,2}} \chi(-t) O(\gamma_t,\tf).
	\end{equation}

	\subsection{The theorem}

	\begin{theo}\label{theo spectraldec orbint}
		Let $\chi: A=F^\times\to \{\pm 1 \}$ be a quadratic character. Then, there exists an absolute constant $C>0$ (depending, in particular, on the normalization of the invariant measures on the conjugacy classes $\Ad(M)\gamma_t$) as well as a function $\pi\in\Temp^{\orth}_\chi(M)\mapsto c(\tpi)\in \mathbb{S}^1$ (which depends on the choice of the normalizations of the twisted representations $\tpi$) such that
		\begin{equation*}
			\displaystyle \rI_{\chi}(f)=C \int_{\Temp_\chi^{\orth}(M)} \Theta_{\tpi}(f) c(\tpi)\frac{\gamma^*(0,\pi,\wedge^2,\psi)}{\lvert S_\pi^{+}\rvert} d\pi
		\end{equation*}
		for every $f\in C_c^\infty(\tM)$, where we recall that the distribution on the left-hand side is defined by \eqref{def Ichi}.
	\end{theo}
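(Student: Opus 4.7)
The plan is to realize $\rI_\chi(f)$ as the residue at $s=0$ of a standard intertwining operator attached to a suitable ``big'' representation, and then to compute the same residue by a spectral procedure. Concretely, I would take $G=\SO_{2d+1}(F)$, let $P=MU$ be the Siegel parabolic with Levi $M=\GL(V)$, and consider the normalized standard intertwining operator
\[
\mathcal{M}(\Pi,s):\rI_P^G\bigl(\Pi\otimes \lvert\det\rvert^s\bigr)\to \rI_{\overline P}^G\bigl(\Pi\otimes \lvert\det\rvert^s\bigr)
\]
applied, however, not to an irreducible $\Pi$ but to the regular representation $\Pi$ of $M$ on $C_c^\infty(A\backslash M,\chi)$, where $A$ acts by the quadratic character $\chi$. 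The induced space can be realized concretely as functions on $G$ valued in $C_c^\infty(A\backslash M,\chi)$, and the defining integral for $\mathcal{M}(\Pi,s)$ converges absolutely for $\Re(s)>0$. For $\phi\in C_c^\infty(\tM)$ one constructs, by averaging against $\phi$, a natural test vector $e_\phi$ in the induced space, and a natural ``evaluation at the identity'' functional; pairing these against $\mathcal{M}(\Pi,s)e_\phi$ gives a meromorphic function of $s$.

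The first step (\emph{geometric side}) is to repeat Shahidi's original computation \cite{Shaend} for this big $\Pi$: after unfolding the intertwining integral and changing variables to the Bruhat cell $\overline P w P\subset G$, the integrand becomes an integral of $\phi$ on $\tM$ against a kernel. One shows this has a simple pole at $s=0$ whose residue, after accounting for the central character $\chi$ and summing over the square classes of $F^\times$, is, up to a nonzero absolute constant, equal to
\[
\sum_{t\in F^\times/F^{\times,2}} \chi(-t)\,O(\gamma_t,\phi)=\rI_\chi(\phi).
\]
The case when $d$ is even requires Rao's result \cite{Rao} to handle the non-closed orbit $\Ad(M)\gamma_t$, exactly as in the analogous computation of \cite[\S 8]{HII}.

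The second step (\emph{spectral side}) is to desingularize $\mathcal{M}(\Pi,s)$ through the Plancherel formula for $M$ with fixed central character $\chi$ (formula \eqref{Plancherel formula2}). Doing so expresses $\Pi$ as a direct integral of irreducible tempered $\sigma\in\Temp_\chi(M)$ weighted by $\mu_{M,\chi}(\sigma)$, and $\mathcal{M}(\Pi,s)$ decomposes as a direct integral of the standard intertwining operators $\mathcal{M}(\sigma,s)$ on the irreducible inductions $\rI_P^G(\sigma\otimes\lvert\det\rvert^s)$. By Shahidi's normalization \cite{ShaGLn}, the relevant scalar governing each $\mathcal{M}(\sigma,s)$ is proportional to $\gamma(s,\sigma,\Sym^2,\psi)^{-1}$, so the pairing against our test vector becomes, up to an entire factor and the relevant Jacobians of $\psi$-measures,
\[
\int_{\Temp_\chi(M)} \Theta_{\sigma,s}(\phi)\,\gamma(s,\sigma,\Sym^2,\psi)^{-1}\,\mu_{M,\chi}(\sigma)\,d_\chi\sigma,
\]
where $\Theta_{\sigma,s}(\phi)$ is a smooth family of distributions, in $s$ and $\sigma$, reducing at $s=0$ to (a scalar multiple of) the twisted character $\Theta_{\tilde\sigma}(\phi)$ when $\sigma\in\Temp^{\orth}_\chi(M)$ (via the unique-up-to-scalar self-duality isomorphism used to define $\tpi$). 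Multiplying by $d\gamma(s,\mathbf 1_F,\psi)$ to extract the residue and letting $s\to 0^+$, the enhanced form of Proposition \ref{prop1 spectral limit} (as in Remark \ref{rmk}, which I would check applies because $s\mapsto \Theta_{\sigma,s}(\phi)$ is smooth in $\sigma$ on each component and entire in $s$ with compact support in $\sigma$) converts this into
\[
2\chi(-1)^{d-1}\int_{\Temp^{\orth}_\chi(M)}\Theta_{\tilde\sigma}(\phi)\,c(\tilde\sigma)\,\frac{\gamma^*(0,\sigma,\wedge^2,\psi)}{\lvert S_\sigma^{+}\rvert}\,d^{\orth}\sigma,
\]
the phases $c(\tilde\sigma)\in\mathbb S^1$ appearing from the comparison between the intrinsic Whittaker-normalized operator and our chosen extension $\tilde\sigma$.

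Equating the two expressions for the residue gives the theorem, with $C$ collecting the various absolute Jacobians produced by the $\psi$-Haar measures, the measure $d^{\orth}\pi$, and the normalization of the orbital measures $d_t\tm$. I expect the main obstacle to be the rigorous justification that the geometric residue computation extends to the non-admissible representation on $C_c^\infty(A\backslash M,\chi)$: one must verify that the intertwining integral defines a meromorphic family of continuous functionals on a well-chosen topological completion, that the contour/residue manipulation commutes with integration against $\phi$, and that the resulting distribution is correctly identified with $\rI_\chi$ (with the correct normalizations for each square class of $t$). A secondary technical point, but essentially bookkeeping, is to track the various normalizing constants so that they collapse into a single absolute $C$ independent of $\chi$ and of $\phi$, and to show that the scalars appearing on the spectral side have modulus one so that they can be absorbed into the function $c(\tilde\pi)$.
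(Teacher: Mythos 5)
Your proposal follows essentially the same route as the paper: realize $\rI_\chi(f)$ as the residue at $s=0$ of the standard intertwining integral on $\SO_{2d+1}(F)$ applied to a section built from the regular representation $C_c^\infty(A\backslash M,\chi)$, compute the residue geometrically via Shahidi's Bruhat-cell analysis and spectrally via the Plancherel formula for $M$, Shahidi's $\Sym^2$ normalization, and the enhanced form of Proposition \ref{prop1 spectral limit}. The only step you gloss over that the paper must (and does) establish separately is the generic irreducibility of $\rI_M^G(\pi)$ for $\pi\in\Temp^{\orth}_\chi(M)$, which is what licenses the Schur-lemma identification of the normalized operator at $s=0$ with the twisted character up to a unimodular scalar.
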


The proof of Theorem \ref{theo spectraldec orbint} will occupy the rest of this section, up to the end of Subsection \ref{IO and spectral exp}.
	
	\subsection{Embedding in an odd special orthogonal group}
	Set
	$$\displaystyle U=V\oplus L\oplus V^*$$
	where $L=F$. We equip $U$ with the quadratic form $Q$ defined by
	$$\displaystyle Q((x,\lambda,x^*),(y,\mu,y^*))=\langle x,y^*\rangle+\langle y,x^*\rangle +\lambda\mu,\;\; (x,\lambda,x^*),(y,\mu,y^*)\in U,$$
	where $\langle .,.\rangle$ stands for the canonical pairing between $V$ and $V^*$.
	
	Let $G=\SO(U,Q)$ be the special orthogonal group of the quadratic space $(U,Q)$. We denote by $P$ and $\overline{P}$ the parabolic subgroups of $G$ stabilizing the maximal isotropic subspaces $V$ and $V^*$ respectively:
	$$\displaystyle P=\Stab_G(V),\;\;\; \overline{P}=\Stab_G(V^*).$$
	Then, $P$, $\overline{P}$ are opposite and their common Levi factor $P\cap \overline{P}$ can be identified, by restriction to $V$, with $M=\GL(V)$. We denote by $N$, $\overline{N}$ the unipotent radicals of $P$ and $\overline{P}$ respectively.
	
	\begin{prop}\label{prop irred generic}
	For $\pi\in \Temp^{\orth}(M)$ in general position (more precisely: outside of a subset of $\Temp^{\orth}(M)$ that is negligible with respect to the measure $d^{\orth}\pi$), the induced representation $\rI_M^G(\pi)$ is irreducible.
	\end{prop}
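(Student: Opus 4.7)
The strategy is to reduce to a computation of the component group of a Langlands parameter via the local Langlands correspondence for $G=\SO_{2d+1}$ (Arthur \cite{artbook}), and to show that this component group is trivial for $\lambda$ in general position. Concretely, I would start by using the parametrization of Subsection \ref{Sect ortho reps} to write $\pi\simeq \rI_L^M(\sigma_\lambda)$ for some Levi $L\subset M$, involution $w\in W(M,L)$, discrete series $\sigma\simeq \bigboxtimes_{i\in I}(\rho_i\boxtimes \rho_i^\vee)\boxtimes \bigboxtimes_{j\in J}\tau_j$ in $\Pi_2^{\orth,w}(L)$, and parameter $\lambda\in i\cA_{L,w}^*$. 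Transitivity of parabolic induction gives $\rI_M^G(\pi)\simeq \rI_L^G(\sigma_\lambda)$, so it suffices to establish that $\rI_L^G(\sigma_\lambda)$ is irreducible for $\lambda$ outside a proper analytic subset of $i\cA_{L,w}^*$.

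Next, I would compute the Langlands parameter of $\rI_M^G(\pi)$ inside $\widehat{G}=\Sp_{2d}(\bC)$. The parameter of $\pi$, viewed as a representation of $\GL_d$, is
\[
\phi_\pi \;=\; \bigoplus_{i\in I}\bigl(\phi_{\rho_i}\otimes\lvert\cdot\rvert^{\lambda_i}\oplus \phi_{\rho_i}^\vee\otimes\lvert\cdot\rvert^{-\lambda_i}\bigr) \;\oplus\; \bigoplus_{j\in J}\phi_{\tau_j},
\]
and the doubling embedding $\widehat{M}=\GL_d(\bC)\hookrightarrow \Sp_{2d}(\bC)$ identifies the parameter of $\rI_M^G(\pi)$ with $\phi:=\phi_\pi\oplus \phi_\pi^\vee$, which equals $2\phi_\pi$ because $\phi_\pi$ is self-dual. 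For $\lambda$ avoiding the finite family of hyperplanes on which two of the twists $\phi_{\rho_i}\otimes\lvert\cdot\rvert^{\lambda_i}$ become isomorphic, or one of them coincides with a dual $\phi_{\rho_{i'}}^\vee\otimes\lvert\cdot\rvert^{-\lambda_{i'}}$ or with some $\phi_{\tau_j}$, the irreducible summands of $\phi_\pi$ are pairwise non-isomorphic. The centralizer then factorizes across isotypic components to give
\[
Z_{\Sp_{2d}(\bC)}(\phi)\;\simeq\; \prod_{i\in I}\GL_2(\bC)\;\times\; \prod_{j\in J}\SL_2(\bC),
\]
the $\GL_2(\bC)$ factors coming from the multiplicity-two non-self-dual pairs $\{\phi_{\rho_i}\otimes\lvert\cdot\rvert^{\lambda_i},\,\phi_{\rho_i}^\vee\otimes\lvert\cdot\rvert^{-\lambda_i}\}$ and the $\SL_2(\bC)=\Sp_2(\bC)$ factors from the orthogonal summands $\phi_{\tau_j}$ of multiplicity two (the form induced on the multiplicity space being symplectic, as orthogonal tensor symplectic is symplectic).

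As both $\GL_2(\bC)$ and $\SL_2(\bC)$ are connected, $S_\phi:=\pi_0(Z_{\Sp_{2d}(\bC)}(\phi))$ is trivial, so Arthur's classification yields $\lvert\Pi^G(\phi)\rvert=1$, and the tempered induction $\rI_M^G(\pi)$ - necessarily a direct sum of members of this $L$-packet - must therefore be irreducible. The main subtlety I expect lies in the edge cases where some $\rho_i$ is itself self-dual (of symplectic or orthogonal type): at $\lambda_i=0$ the dual pair collapses and the corresponding local contribution to $Z_{\Sp_{2d}(\bC)}(\phi)$ may become disconnected (an $\rO_m(\bC)$ factor typically appearing in place of a $\GL_m(\bC)$ one), but this only happens on the measure-zero locus cut out by the equation $\lambda_i=0$ and so does not affect the generic statement.
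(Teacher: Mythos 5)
Your proposal is correct in substance but takes a genuinely different route from the paper. You compute the composed parameter $\phi=\phi_\pi\oplus\phi_\pi^\vee$ in $\widehat{G}=\Sp_{2d}(\bC)$ and show its centralizer is generically connected ($\prod_i\GL_2(\bC)\times\prod_j\Sp_2(\bC)$), then invoke Arthur's classification; the paper instead stays entirely on the group side: it reduces to a maximal Levi via Goldberg's explicit $R$-group computations for $\SO_{2n+1}$, applies Silberger's criterion (reducibility of $\rI_M^G(\sigma)$ for $\sigma$ self-dual discrete is equivalent to the standard intertwining operator being regular at $s=0$), and detects the pole via Shahidi's normalization together with Henniart's identity $L^{\Sh}(s,\sigma,\Sym^2)=L(s,\sigma,\Sym^2)$. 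The paper's route has the advantage of resting only on pre-Arthur inputs (Goldberg, Silberger, Shahidi, Henniart), whereas yours imports the full endoscopic classification for the auxiliary group $\SO_{2d+1}$; on the other hand your centralizer computation is shorter and makes the generic locus completely transparent. One step you should tighten: "the packet is a singleton, hence the induction is irreducible" is not immediate, since a priori $\rI_L^G(\sigma_\lambda)$ could be a single packet member occurring with multiplicity $>1$. What you actually need is Arthur's identification of the Knapp--Stein $R$-group of $\sigma_\lambda$ with the endoscopic $R$-group $R_\phi$ (a subquotient of $\pi_0(Z_{\widehat{G}}(\phi))$), which is trivial precisely because the centralizer is connected; equivalently, the multiplicity-one decomposition of tempered inductions into packet members. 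With that reference supplied, your argument is complete.
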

	
	\begin{proof}
	By the discussion in Subsection \ref{Sect ortho reps}, we can write $\pi=\rI_L^M(\sigma)$ where $L\subset M$ is a Levi subgroup of the form
	$$\displaystyle L= \prod_{i\in I} (\GL_{n_i}\times \GL_{n_i})\times \prod_{j\in J} \GL_{m_j},$$
	and $\sigma$ is a square-integrable representation of $L$ of the form
	$$\displaystyle \sigma=\bigboxtimes_{i\in I} \rho_i\boxtimes \rho_i^\vee \boxtimes \bigboxtimes_{j\in J} \tau_j$$
	where $\rho_i\in \Pi_2(\GL_{n_i})$ for every $i\in I$ and $\tau_j\in \Pi_2^{\orth}(\GL_{m_j})$ for every $j\in J$. Clearly, if $\pi$ is in general position then none of the $\rho_i$, $i\in I$, is of symplectic type. Note that, since the $\tau_j$ are already of orthogonal type, none of them is of symplectic type either. Thus, it suffices to show more generally that if
	$$\displaystyle L=\GL_{n_1}\times \ldots \times \GL_{n_k}$$
	is a Levi subgroup of $M$ and
	$$\displaystyle \sigma=\tau_1\boxtimes \ldots \boxtimes \tau_k\in \Pi_2(L)$$
	is a square-integrable representation of $L$ with none of the $\tau_i$ of symplectic type, then $\rI_M^G(\pi)\simeq \rI_L^G(\sigma)$ is irreducible.
	
	As follows from the theory of the $R$-group, the parabolic induction of a discrete series is irreducible if and only if the associated $R$-group is trivial. The computation of the $R$-group of $\rI_L^G(\sigma)$ has been reduced in \cite{GoldSO} to the case where $L$ is a maximal Levi subgroup. More precisely, from Theorem 4.18 and Theorem 6.5 of {\it loc.\ cit.}\, to deduce the irreducibility of $\rI_L^G(\sigma)$ for $\sigma$ as above, it suffices to check the following (up to replacing the vector space $V$ by a smaller subspace): for $\sigma\in \Pi_2(M)$ that is not of symplectic type, $\rI_M^G(\sigma)$ is irreducible. However, by \cite[Corollary 5.4.2.3]{Silb}, $\rI_M^G(\sigma)$ is reducible if and only if $\sigma\simeq \sigma^\vee$ and the standard intertwining operator
	$$\displaystyle \mathcal{M}(\sigma,s): \rI_P^G(\sigma_s)\to \rI_{\overline{P}}^G(\sigma_s),$$
	where we have set $\sigma_s:=\sigma\otimes \lvert \det\rvert^s$, has no pole at $s=0$. Since $\sigma$ is a discrete series that is not of symplectic type, if $\sigma\simeq \sigma^\vee$ then it is of orthogonal type. Moreover, by the normalization given in \cite{ShaLangconj} of the standard intertwining operators, $\mathcal{M}(\sigma,s)$ has a pole at $s=0$ if and only if the local $L$-factor $L^{\Sh}(s,\sigma,\Sym^2)$ defined by Shahidi has a pole at the same point. By \cite{HenLfn}, we have the identity $L^{\Sh}(s,\sigma,\Sym^2)=L(s,\sigma,\Sym^2)$ and the latter has a pole at $s=0$ when $\sigma$ is of orthogonal type. This proves the claim and hence the proposition.
	\end{proof}

\subsection{A lemma on the Bruhat decomposition of elements in $\overline{N}$}
	
	To any $g\in G$, we associate $B_g\in \Bil(V)$ defined by
	$$B_g(x,y)=Q(gx,y),\;\; x,y\in V.$$
	The map $g\in G\mapsto B_g\in \Bil(V)$ is regular and $M\times M$-equivariant, where the first and second copy of $M$ act by left and right translations on $G$ respectively.
	
	Let $\Norm_G(M)$ be the normalizer of $M$ in $G$. Then, $g\mapsto B_g$ identifies, $M\times M$-equivariantly, the non-neutral component $\Norm_G(M)\setminus M$ with $\tM\subset \Bil(V)$.
	
	For $\overline{u}\in \overline{N}$, the composition of the restriction of $\overline{u}$ to $V$ with the projection onto $L$ (parallel to the subspace $V\oplus V^*$) yields a linear form $\ell_{\overline{u}}\in V^*$. This induces a morphism $\overline{N}\to V^*$ that is readily seen to be surjective. Let $Z(\overline{N})$ be the kernel of this morphism so that we have a short exact sequence
	\begin{equation}\label{eq0}
		\displaystyle 0\to Z(\overline{N})\to \overline{N}\to V^*\to 0.
	\end{equation}
	Then, $Z(\overline{N})$ is included in the center of $\overline{N}$ (and is actually the full center whenever $d\geq 2$) and the map $z\in Z(\overline{N})\mapsto B_z$ yields an isomorphism $Z(\overline{N})\simeq \Alt(V)$.
	
	Set
	$$\displaystyle G'=N \tM N.$$
	Then, $G'$ is the Zariski open subset consisting of those $g\in G$ such that the bilinear form $B_g$ is non-degenerate. Indeed, from the definition it is easy to see that, for $g\in G$, the form $B_g$ is nondegenerate  if and only if $gV\cap (L\oplus V)=0$ or equivalently, since every isotropic subspace of $L\oplus V$ is contained in $V$, $gV\cap V=0$. This last condition is equivalent to $gPg^{-1}$ being a parabolic subgroup opposite to $P$. As $P$ acts transitively on the set of parabolic subgroups opposite to it and is self-normalizing, we deduce that the set of those $g\in G$ with $B_g$ nondegenerate is a $P$ double coset. But $G'$ is certainly such a double coset and it contains elements $g$ with $B_g$ nondegenerate (e.g. those of $\tM$) hence the result.
	
	The regular map $N\times \tM\times N\to G'$, $(u_1,\tm,u_2)\mapsto u_1 \tm u_2$ is an isomorphism. We shall denote by $g\in G'\mapsto \tm(g)\in \tM$ the unique regular map such that $g\in N\tm(g)N$ for every $g\in G'$. This map is $M\times M$-equivariant.
	
	Set $\oN'=\oN\cap G'$. It is a Zariski dense open subset of $\oN$ and the regular map $\ou\in \oN\mapsto \tm(\ou)\in \tM$ is $M$-equivariant for the adjoint actions. Set $\epsilon=-\Id_V\in A$.
	
	The following proposition is a reformulation of part of the discussion of \cite[\S 10]{Shaend} pertaining to $\SO(2n+1)$ (see in particular Lemmas 10.3 and 10.4 of {\em loc.\ cit.}\ ).
	
	\begin{prop}[Shahidi]
		The map $\ou\in \oN'\mapsto \tm(\ou)\in \tM$ is $\Ad(\epsilon)$-invariant and induces an homeomorphism $\oN'/\Ad(\epsilon)\simeq \tM^\sharp$.
	\end{prop}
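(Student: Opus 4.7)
The plan is to carry out an explicit matrix computation, following Shahidi's original argument. Fix a basis of $V$ and its dual basis of $V^*$; this provides a basis of $U$ in which $Q$ has antidiagonal matrix. Elements of $G=\SO(U,Q)$ are then $(2d+1)\times(2d+1)$ matrices in $d+1+d$ block form, and a general element of $\overline{N}$ takes the shape
$$
\overline{u}(\alpha,\beta) := \begin{pmatrix} I & 0 & 0 \\ \alpha & 1 & 0 \\ \beta & -\alpha^T & I \end{pmatrix},\qquad \alpha\in V^*,\ \beta\in\Bil(V),
$$
with the orthogonality constraint translating to $\beta+\beta^T=-\alpha^T\alpha$. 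Thus $\beta^s$ has rank at most one and, when nonzero, represents the square class of $-1$ in $F^\times/F^{\times,2}$.

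The crux is to make the Bruhat factor $\tm(\overline{u})$ explicit. The key observation is that $N$ acts trivially on the graded pieces of the flag $0\subset V\subset V\oplus L\subset U$ (and $\overline{N}$ on the opposite flag); in particular $N$ fixes $V$ pointwise and acts as the identity on the quotient $U/(V\oplus L)\simeq V^*$, while any $\tm\in\tM$ sends $V$ into $V^*$. Writing $\overline{u}=u_1\tm u_2$ with $u_1,u_2\in N$ and $\tm\in\tM$, it follows that for $v\in V$ the $V^*$-component of $\overline{u}(v)$ equals $\tm(v)$. Reading this off the matrix identifies $\tm(\overline{u}(\alpha,\beta))$, under the bijection $\tM\simeq\Bil^*(V)$, with the bilinear form $\beta$ itself. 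In particular $\overline{u}(\alpha,\beta)\in G'$ iff $\beta$ is nondegenerate, which yields a concrete description of $\overline{N}'$.

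The conclusions of the proposition then follow from three short verifications. First, a direct matrix computation gives $\epsilon\,\overline{u}(\alpha,\beta)\,\epsilon^{-1}=\overline{u}(-\alpha,\beta)$; since $\tm(\overline{u})$ depends only on $\beta$, the map is $\Ad(\epsilon)$-invariant. Second, the image consists of nondegenerate bilinear forms $\beta$ for which $\beta^s$ either vanishes (so $\beta$ is alternating, forcing $d$ even) or has rank $1$ with square class $-1$, which by the preceding lemma is exactly $\tM^\sharp$. Third, injectivity on the $\Ad(\epsilon)$-quotient follows from the fact that $\beta+\beta^T=-\alpha^T\alpha$ determines $\alpha$ from $\beta$ up to sign, and this sign change is precisely the $\Ad(\epsilon)$-action.

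Upgrading the resulting set-theoretic bijection to a homeomorphism is the one subtle point. The forward map is continuous as the restriction of a regular morphism of algebraic varieties. For the inverse, on the open locus $\beta^s\neq 0$ one has a locally defined analytic square root for $\alpha$ (up to the sign that is exactly the $\Ad(\epsilon)$ ambiguity being quotiented out), so continuity is automatic; on the complementary closed locus $\beta^s=0$ (relevant only when $d$ is even) continuity reduces to the elementary fact that if $\alpha_n^T\alpha_n\to 0$ then $\alpha_n\to 0$. This transition between the two strata $\Ad(M)\gamma_{-1}$ and $\Ad(M)\gamma_0$ of $\tM^\sharp$ is the main—though very mild—obstacle in the proof.
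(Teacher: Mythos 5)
Your proof is correct and takes essentially the same route as the paper's: the two ``elementary facts left to the reader'' in the paper are precisely your matrix identities $\beta^s=-\alpha^T\alpha$ and the additivity of $\beta$ coming from the extension $0\to Z(\oN)\to \oN\to V^*\to 0$, and the identification $\tm(\ou)=\beta=B_{\ou}$ (via $N$ acting trivially on $V$ and on $U/(V\oplus L)$) is the unstated step both arguments rely on. Your explicit verification of continuity of the inverse across the stratum $\beta^s=0$ merely fills in what the paper asserts ``readily follows.''
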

	
	\begin{proof}
		We use the following elementary facts whose proofs are left to the reader.
		\begin{enumerate}
			\item For every $\ou\in \oN$, we have $B_{\ou}^s(x,y)=-\ell_{\ou}(x)\ell_{\ou}(y)$ for $x,y\in V$.
			
			\item For all $z\in Z(\oN)$, $\ou\in \oN$ we have $B_{z\ou}=B_z+B_{\ou}$.
		\end{enumerate}
		This already shows that the image of $\ou\in \oN\mapsto B_{\ou}$ is the closed subset $\Bil(V)^\sharp$ of those $B\in \Bil(V)$ such that $B^s$ is of rank at most one and equivalent to $x\mapsto -x^2$ if nonzero. Furthermore, it also readily follows from 1., 2. and the short exact sequence \eqref{eq0} that $\ou\mapsto B_{\ou}$ induces an homeomorphism $\oN/\Ad(\epsilon)\simeq  \Bil(V)^\sharp$. The proposition follows by restriction to the open subset of nondegenerate forms.
	\end{proof}
	
	Let $\delta$ be the modular character of $P$. Then, we have $\delta(m)=\lvert \det(m)\rvert^{d}$ for every $m\in M=\GL(V)$. We also fix Haar measures on $N$, $\oN$. For every $\tm\in \tM$, $\Ad(\tm)$ induces an isomorphism $N\simeq \oN$ and we denote by $\widetilde{\delta}(\tm)$ its Jacobian. Note that
	\begin{equation}\label{eq tildedelta}
		\displaystyle \widetilde{\delta}(m_1\tm m_2)=\delta(m_1)^{-1}\widetilde{\delta}(\tm)\delta(m_2),\;\; \mbox{ for every } (m_1,\tm,m_2)\in M\times \tM\times M.
	\end{equation}
	In particular, the measure $\widetilde{\delta}(\tm)^{1/2} d_{-1} \tm$ on $\Ad(M)\gamma_{-1}$ transform according to the modular character $\delta_{\overline{P}}=\delta^{-1}$ of $\overline{P}$ under the adjoint action of $M$.
	
	Let $\oN''\subset \oN'$ be the inverse image of $\Ad(M)\gamma_{-1}$ by the map $\ou\mapsto \tm(\ou)$. By the proposition, $\oN''$ is a Zariski dense open subset of $\oN'$ (and is actually equal to it when $d$ is odd) that is homogeneous under $\Ad(M)$. The homeomorphism $\tm: \oN''/\Ad(\epsilon)\simeq \Ad(M)\gamma_{-1}$ sends the (restriction of) fixed Haar measure on $\oN$ to a positive multiple of the measure $\widetilde{\delta}(\tm)^{1/2} d_{-1} \tm$ on $\Ad(M)\gamma_{-1}$, that is there exists a constant $C_0>0$ such that
	\begin{equation}\label{eq2}
		\displaystyle \int_{\oN'} f(\tm(u)) du= C_0 \cdot \rO(\gamma_{-1},\widetilde{\delta}^{1/2}f)
	\end{equation}
	for every $f\in C_c^\infty(\tM)$.
	
	\subsection{Convenient sections and orbital integrals}
	
	For every character $\omega: A\to \bC^\times$, we denote by $C_c^\infty(AN\backslash G,\omega)$ the space of functions
	$$\Phi:N\backslash G\to \bC$$
	that are right-invariant by a compact-open subgroup, satisfy
	$$f(ag)=\omega(a)f(g)$$
	for $(a,g)\in A\times G$ and whose support has compact image in $AN\backslash G$.
	 
	By a {\em convenient section}, we mean a family of functions
	$$\displaystyle s\in \bC\mapsto \Phi_s\in C_c^\infty(AN\backslash G,\chi \delta^{1/2}\lvert \det \rvert^s)$$
	such that we can find $\Phi\in C_c^\infty(N\backslash G)$ with
	\begin{itemize}
		\item For every $s\in \bC$ and $g\in G$,
		$$\displaystyle \Phi_s(g)=\int_{A} \Phi(ag) \chi(a) \delta(a)^{-1/2} \lvert \det(a)\rvert^{-s}da;$$
		\item $\Supp(\Phi)\subset G'$. 
	\end{itemize}
	In particular, if $s\mapsto \Phi_s$ is a convenient section we have $\Supp(\Phi_s)\subset G'$ for every $s\in \bC$. 
	
	Fix a quadratic character $\chi:A=F^\times\to \{\pm 1 \}$ and let $f \in C_c^\infty(\tM)$. We fix from now on a function $\Phi\in C_c^\infty(N\backslash G')$ such that
	\begin{equation}\label{eq tf}
	\displaystyle f=\widetilde{\delta}^{1/2} \cdot \Phi^\vee\mid_{\tM},
\end{equation}
where $\Phi^\vee\mid_{\tM}$ denotes the restriction of the function $\Phi^\vee: g\mapsto \Phi(g^{-1})$ to $\tM\subset G$, and we consider the convenient section $s\in \bC\mapsto \Phi_s\in C_c^\infty(AN\backslash G,\chi \delta^{1/2}\lvert \det \rvert^s)$ deduced from $\Phi$ as above.
	
	\begin{prop}\label{prop}
		The integral
		\begin{equation}\label{eq4}
			\displaystyle \int_{\oN} \Phi_s(\ou) d\ou
		\end{equation}
		converges for $\Re(s)>0$ and moreover we have the identity
		$$\displaystyle \lim\limits_{s\to 0^+} \gamma(s,\mathbf{1}_F,\psi) \int_{\oN} \Phi_s(\ou)d\ou =C_1 \cdot \rI_{\chi}(f)$$
		where $C_1>0$ is an absolute constant (depending on our choice of Haar measures). 
	\end{prop}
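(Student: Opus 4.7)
The proposition asserts both convergence of $\int_{\oN}\Phi_s(\ou)\,d\ou$ for $\Re(s)>0$ and an explicit limit identity relating this intertwining-type integral to the distribution $\rI_\chi(f)$. The plan is to follow the strategy used by Shahidi in \cite{Shaend}: unfold $\Phi_s$, change variables using the Bruhat coordinates on $G'$ together with the $A$-equivariance, and read off the residue at $s=0$ via the $2$-to-$1$ cover $\oN'/\Ad(\epsilon)\simeq\tM^\sharp$ from the previous proposition combined with a Tate-type analysis of the resulting $A$-integral.

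For convergence, I substitute the definition of $\Phi_s$ into $I(s):=\int_{\oN}\Phi_s(\ou)\,d\ou$, exchange the orders of integration, and perform the substitution $\ou\mapsto a^{-1}\ou a$ in the inner $\oN$-integral (whose Jacobian on $\oN$ is $|\det a|^d$). Since $\Supp(\Phi)\subset G'$ forces $\ou\in\oN'$, this rewriting yields
$$I(s)=\int_A\chi(a)|\det a|^{d/2-s}\Big(\int_{\oN'}\Phi(\ou\,a)\,d\ou\Big)\,da.$$
For fixed $a$ the inner integrand is compactly supported in $\oN'$ by the support hypothesis on $\Phi$; a uniform bound on the resulting function of $a$ together with the positive exponent $d/2-s$ will yield absolute convergence for all $\Re(s)>0$.

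For the limit identity I exploit the Bruhat coordinates on $G'\simeq N\times\tM\times N$: for $\ou\in\oN'$ decomposed as $\ou=n_1(\ou)\tm(\ou)n_2(\ou)$ one has $\ou a=n_1(\ou)(\tm(\ou)a)(a^{-1}n_2(\ou)a)$, so by left $N$-invariance $\Phi(\ou a)$ depends only on the pair $(\tm(\ou)a,\,a^{-1}n_2(\ou)a)\in\tM\times N$. Combining this with the identification $\oN'/\Ad(\epsilon)\simeq\tM^\sharp$ and the measure identity \eqref{eq2}, I can rewrite the $\oN'$-integral as one against $\widetilde{\delta}(\tm)^{1/2}d_{-1}\tm$ on $\Ad(M)\gamma_{-1}$, enriched by a transverse integration in the $N$-factor capturing the off-diagonal Bruhat components. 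Using the $A$-equivariance $\Ad(M)\gamma_{-1}\cdot a=\Ad(M)\gamma_{-a}$, the combined $(\tm,a)$-integration decomposes by square classes $t\in F^\times/F^{\times,2}$; within each class the scaling produces a one-dimensional Tate zeta integral of the form $\int_F|w'|^{2s-1}g(w')\,dw'$ whose simple pole at $s=0$ exactly cancels the simple zero of $\gamma(s,\mathbf{1}_F,\psi)$. The residue localizes at the boundary $n_2=1$, i.e.\ at the restriction $\Phi|_{\tM}$; by the defining relation $f=\widetilde{\delta}^{1/2}\Phi^\vee|_{\tM}$ this yields the function $f$ (up to a $\widetilde{\delta}^{1/2}$-factor which is precisely absorbed by the orbital measure normalization). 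Collecting constants and summing over $t\in F^\times/F^{\times,2}$ produces the claimed formula
$$\lim_{s\to 0^+}\gamma(s,\mathbf{1}_F,\psi)\,I(s)=C_1\sum_{t\in F^\times/F^{\times,2}}\chi(-t)\,O(\gamma_t,f)=C_1\cdot\rI_\chi(f)$$
for an explicit absolute constant $C_1>0$ depending only on the Haar measure normalizations.

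The main obstacle is the combinatorial bookkeeping in the change of variables between the Bruhat coordinates on $\oN'$ and the orbital coordinates on $\tM$, especially in the even $d$ case where the orbit $\Ad(M)\gamma_0\subset\tM^\sharp$ sits in the closure of $\Ad(M)\gamma_{-1}$. One must verify that this lower-dimensional stratum contributes zero measure to the $\oN'$-integration and hence does not produce extra terms in the final formula, so that only the generic orbital integrals with $t\in F^\times/F^{\times,2}$ enter $\rI_\chi(f)$.
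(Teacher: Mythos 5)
Your overall strategy is the same as the paper's (and as Shahidi's original computation): unfold $\Phi_s$ over $A$, push the central variable into the Bruhat coordinates of $\ou\in\oN'$, recognize a one-dimensional Tate integral whose simple pole at $s=0$ cancels the simple zero of $\gamma(s,\mathbf{1}_F,\psi)$, and identify the residue with $\sum_t\chi(-t)O(\gamma_t,f)$ via the homeomorphism $\oN'/\Ad(\epsilon)\simeq\tM^\sharp$ and the measure identity \eqref{eq2}. Your description of the limit computation, including the square-class decomposition of $A$ and the remark that in the even-$d$ case the stratum $\Ad(M)\gamma_0$ must be shown to be negligible (in the paper this is the passage from $\oN'$ to the dense open subset $\oN''$), matches the paper's argument.

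The genuine gap is the convergence argument. After your one-sided substitution you arrive at $I(s)=\int_A\chi(a)\lvert\det a\rvert^{d/2-s}h(a)\,da$ with $h(a)=\int_{\oN'}\Phi(\ou a)\,d\ou$, and you claim that a uniform bound on $h$ together with the positive exponent $d/2-s$ yields absolute convergence for all $\Re(s)>0$. This cannot work: for a merely bounded $h$, the integral $\int_A\lvert\det a\rvert^{d(d/2-\Re s)}\,d^{\times}a$ diverges at $\lvert a\rvert\to\infty$ whenever the exponent is positive, and near $a=0$ it converges only for $\Re(s)<d/2$; in no regime does boundedness alone give ``all $\Re(s)>0$''. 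In fact $h$ cannot be compactly supported on $F^\times$ either, since then $I(s)$ would be entire and the limit in the proposition would vanish. The difficulty is created by your choice of substitution, which dilates the $\tM$-component: writing $\ou a\in N\,(\tm(\ou)a)\,(\Ad(a^{-1})u(\ou))$, the support condition couples $a$ with $\tm(\ou)$, and since $\tm(\ou)$ ranges over the noncompact orbit $\Ad(M)\gamma_{-1}$ the $a$-support of the integrand is not controlled. The paper instead first splits $A$ into square classes $a=t{a'}^2$ and then substitutes $\ou\mapsto {a'}^{-1}\ou a'$; because $a'\tm a'=\tm$ for $\tm\in\tM$, this \emph{fixes} the $\tM$-coordinate and only dilates the unipotent coordinate, so that for each fixed $\ou$ the central integral becomes $\int_A\Phi^t\bigl(\tm(\ou)\,a'u(\ou){a'}^{-1}\bigr)\lvert a'\rvert^{2ds}\,da'$, whose integrand extends to an element of $C_c^\infty(F)$ in $a'$ (it vanishes for large $a'$ because $u(\ou)\neq 1$ leaves the compact support, and is constant equal to $\Phi^t(\tm(\ou))$ near $a'=0$), uniformly in $\ou$ in the sense that only finitely many such functions occur. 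This is what gives both convergence for $\Re(s)>0$ and the simple pole with residue proportional to $\Phi^t(\tm(\ou))$; note that the relevant exponent is $2ds$, consistent with the Tate integral $\int_F\lvert w'\rvert^{2s-1}g(w')\,dw'$ you invoke in your limit computation, but inconsistent with the exponent $d/2-s$ on which your convergence claim rests. A smaller point: the residue naturally produces $\Phi\mid_{\tM}$ while $f=\widetilde{\delta}^{1/2}\Phi^\vee\mid_{\tM}$ involves $\Phi^\vee$, so the bookkeeping relating the two via \eqref{eq tildedelta} and the invariance of the measures $d_t\tm$ should be made explicit.
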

	
	\begin{proof}
		Let $A^2$ denote the subgroup of squares in $A$. We have the integration formula\footnote{Indeed, it suffices to check that it holds for the characteristic function $\phi=\mathbf{1}_{\cO_F^\times}$; note that $2\lvert 2\rvert^{-1}$ is precisely the cardinality of the group of square classes $\cO_F^\times/\cO_F^{\times,2}$.}
		$$\displaystyle \int_A \phi(a)da=\frac{\lvert 2\rvert}{2}\sum_{t\in A/A^2} \int_A \phi(ta^2) da,\;\; \phi\in C_c^\infty(A).$$
		Thus, by definition of the convenient section $s\mapsto \Phi_s$, and using the fact that $\chi$ is quadratic, we have (ignoring for the moment convergence issues)
		\begin{align}\label{eq3}
			\displaystyle \int_{\oN} \Phi_s(\ou) d\ou & =\int_{\oN\times A} \Phi(a\ou) \chi(a)\delta(a)^{-1/2} \lvert a\rvert^{-ds} dad\ou \\
			\nonumber & = \frac{\lvert 2\rvert}{2} \sum_{t\in A/A^2} \delta(t)^{-1/2}\chi(t) \lvert t\rvert^{-ds}\int_{\oN\times A} \Phi(ta^2\ou)\delta(a)^{-1} \lvert a\rvert^{-2ds} dad\ou \\
			\nonumber & = \frac{\lvert 2\rvert}{2}  \sum_{t\in A/A^2} \delta(t)^{-1/2}\chi(t)  \lvert t\rvert^{-ds}\int_{\oN\times A} \Phi^t(a\ou a) \lvert a\rvert^{-2ds} dad\ou
		\end{align}
		where we have set $\Phi^t(g)=\Phi(tg)$ for any $g\in G$ and the last equality follows from the change of variable $\ou\mapsto a^{-1}\ou a$. Fix $t\in A$ and let $u\in \oN'$. There exists a unique $u(\ou)\in N$ such that $\ou\in N \tm(u)u(\ou)$ and we have
		$$\displaystyle \int_A \Phi^t(a\ou a) \lvert a\rvert^{-2ds} da=\int_A \Phi^t(\tm(\ou) a u(\ou)a^{-1}) \lvert a\rvert^{2ds}da.$$
		Note that the function $a\in F^\times \mapsto \Phi^t(\tm(\ou) au(\ou)a^{-1})$ vanishes for $a\in F^\times$ sufficiently large (by compactness of the support of $f$ and because $u(\ou)\neq 1$) and is constant equal to $\Phi^t(\tm(u))$ in a neighborhood of $0$ (by smoothness of $\Phi$) i.e. it extends to a function in $C_c^\infty(F)$. Thus, by Tate's thesis the above integral converges for $\Re(s)>0$ and is equivalent to $\gamma(2ds,\mathbf{1}_F,\psi)^{-1}\Phi^t(\tm(u))$ when $s\to 0$. Moreover, we claim that all of this happens uniformly in $\ou$. More precisely, as $\Supp(\Phi^t)\subset G'=N \tM N$, we may find a compact subset $\Omega\subset \tM$ such that $\Supp(\Phi^t)\subset N \Omega N$. It follows that the function $a\mapsto \Phi^t(a\ou a)$ vanishes identically unless $\ou$ belongs to the compact subset $\Omega'=\{u\in \oN'\mid \tm(u)\in \Omega \}$. Moreover, as the image of the map $\ou\in \Omega'\mapsto u(\ou)\in N$ is also compact (and avoiding $1\in N$), the previous reasoning shows that the functions $a\mapsto \Phi^t(a\ou a)$ ranges over a finite set as $\ou$ varies in $\Omega'$. Thus, we may write the integral over $\oN$ in the last line of \eqref{eq3} as a finite sum to deduce that \eqref{eq4} converges for $\Re(s)>0$ and
		\[\begin{aligned}
			\displaystyle \lim\limits_{s\to 0} \gamma(s,\mathbf{1}_F,\psi)\int_{\oN} \Phi_s(\ou)d\ou & = \frac{\lvert 2\rvert}{4d}\sum_{t\in A/A^2} \chi(t) \delta(t)^{-1/2} \int_{\oN} \Phi^t(\tm(\ou)) d\ou \\
			& = C_0\cdot \frac{\lvert 2\rvert}{4d} \sum_{t\in F^\times/F^{\times,2}} \chi(t) \int_{\Ad(M)\gamma_{-1}} f(t \tm) d_{-1}\tm \\
			& = C_1\cdot \sum_{t\in F^\times/F^{\times,2}} \chi(-t) O(\gamma_t,f)=C_1 \cdot \rI_{\chi}(f),
		\end{aligned}\]
		where we have set $C_1= C_0\frac{\lvert 2\rvert}{4d} $. Here, the second equality above follows from \eqref{eq2} as well as the relations \eqref{eq tildedelta} and \eqref{eq tf}, whereas the last equality is a consequence of the fact that the isomorphism $\Ad(M)\gamma_{-1}\simeq \Ad(M)\gamma_{-t}$, $\tm\mapsto t\tm$, is measure preserving.
	\end{proof}
	
	\subsection{Intertwining operators and the spectral expansion}\label{IO and spectral exp}
	
	We continue with the setting of the previous section (i.e. with the function $\Phi\in C_c^\infty(N\backslash G')$ and the convenient section $s\mapsto \Phi_s$ that it induces). Let $g\in G$. Then, the restriction $(R(g)\Phi_s)\mid_M$ of the translate $R(g)\Phi_s$ of $\Phi_s$ to $M$ belongs to $C_c^\infty(A\backslash M, \chi \delta^{1/2} \lvert \det\rvert^s)$. Applying the Plancherel formula of $M$ to this function, we obtain that
	\begin{equation}\label{eq5}
		\displaystyle \Phi_s(g)=\int_{\Temp_\chi(M)} \Tr \Phi_{\pi,s}(g) \mu_{M,\chi}(\pi)\; d_\chi \pi,\;\mbox{ for all } g\in G \mbox{ and } s\in \bC,
	\end{equation}
	where we have denoted by $\Phi_{\pi,s}(g)$ the operator
	$$\displaystyle \Phi_{\pi,s}(g)=(\pi_s\otimes \delta^{1/2})((R(g)\Phi)^\vee\mid_M),$$
	for each $\pi\in \Temp_\chi(M)$. (We recall that $\pi_{s}:=\pi \otimes \lvert \det\rvert^{s}$ and that for every function $\varphi$, $\varphi^\vee$ denotes the function $\gamma\mapsto \varphi(\gamma^{-1})$.) This operator can be identified with an element of the tensor product $\pi_s\otimes (\pi_s)^\vee=\pi_s\otimes (\pi^\vee)_{-s}$ and, if we let $M$ act only on the first factor by the representation $\pi_s$, the function $g\mapsto \Phi_{\pi,s}(g)$ belongs to the induced representation $\rI_{P}^G(\pi_s\otimes (\pi^\vee)_{-s})$, that is:
	\begin{equation}\label{eq6}
		\displaystyle \Phi_{\pi,s}\in \rI_{P}^G(\pi_s\otimes (\pi_s)^\vee)\simeq \rI_{P}^G(\pi_s)\otimes (\pi_s)^\vee.
	\end{equation}
	
	For $\pi\in \Temp(M)$, let us denote by
	$$\displaystyle \cM(\pi,s): \rI_{P}^G(\pi_s)\to \rI_{\overline{P}}^G(\pi_s), \; s\in \bC,$$
	the standard intertwining operator. For $\Re(s)>0$, $\cM(\pi,s)$ is characterized by the relation
	\begin{equation}\label{int std IO}
	\displaystyle \langle (\cM(\pi,s)\phi)(g),v^\vee\rangle=\int_{\oN} \langle \phi(\ou g), v^\vee\rangle d\ou,\mbox{ for every } \phi\in \rI_{P}^G(\pi_s),\; g\in G, \; v\in (\pi_s)^\vee,
	\end{equation}
	the integral being absolutely convergent, and for general $s$ this operator is defined by rational continuation. Since we will soon need to use a uniform version for it, let us recall the basic estimate showing the convergence of the above integral. Let $K\subset G$ be a special maximal compact subgroup; so that in particular we have an Iwasawa decomposition $G=PK$. Choose, for every $\ou\in \oN$, elements $m(\ou)\in M$ and $k(\ou)\in K$ such that $\ou\in Nm(\ou)k(\ou)$. Then, for $(\ou,g)\in \oN\times G$, we have
	$$\displaystyle \phi(\ou g)=\lvert \det(m(\ou))\rvert^s \delta(m(\ou))^{1/2}\pi(m(\ou))\phi(k(\ou)g).$$
	As $\ou$ varies, the vector $\phi(k(\ou) g)\in V_\pi$ only assumes finitely many values. Thus, the convergence of the right-hand side of \eqref{int std IO} reduces to that of the integral
	$$\displaystyle \int_{\oN}  \lvert \det(m(\ou))\rvert^s \delta(m(\ou))^{1/2}\langle \pi(m(\ou))v,v^\vee\rangle d\ou$$
	for $(v,v^\vee)\in V_\pi\times V_\pi^\vee$ and $\Re(s)>0$. Let $\Xi^M$ be Harish-Chandra basic spherical function on the group $M$ \cite[\S II]{WaldPlanch}. Then, we have the estimates $\lvert \langle \pi(m)v,v^\vee\rangle\rvert\ll \Xi^M(m)$, for $m\in M$, \cite{CoHaHo} and the convergence can then be deduced from that of the integral
	\begin{equation}\label{eq20}
	\displaystyle \int_{\oN} \lvert \det(m(\ou))\rvert^s \delta(m(\ou))^{1/2} \Xi^M(m(\ou)) d\ou
	\end{equation}
	which follows e.g. from a combination of Lemmas II.3.4 and II.4.2 from \cite{WaldPlanch}.
	
	It follows from \eqref{eq5} and the first part of Proposition \ref{prop} that
	\begin{equation}\label{eq15}
	\displaystyle \int_{\oN}\Phi_{s}(\ou) d\ou=\int_{\oN} \int_{\Temp_\chi(M)} \Tr \Phi_{\pi,s}(\ou) d\mu_{M,\chi}(\pi) d\ou
	\end{equation}
for $\Re(s)>0$.

\begin{lem}\label{lem abs conv}
For every $s\in \bC$ with $\Re(s)>0$, the expression on the right-hand side of \eqref{eq15} is absolutely convergent.
\end{lem}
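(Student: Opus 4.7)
The plan is to produce an explicit Iwasawa-type factorization of $\Tr\Phi_{\pi,s}(\ou)$ that isolates an $\ou$-dependent factor converging by \eqref{eq20} from a $\pi$-dependent factor controlled via the $L^2$-Plancherel isometry for $M$ together with Harish-Chandra's basic estimate for matrix coefficients of tempered representations.

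First, using the Iwasawa decomposition $\ou = n\cdot m(\ou)\cdot k(\ou)$ (with $n\in N$, $m(\ou)\in M$, $k(\ou)\in K$ for a good maximal compact $K\subset G$) valid on a dense open subset of $\oN$, the left $N$-invariance of $\Phi$ combined with the change of variable $m\mapsto m\cdot m(\ou)^{-1}$ yields
$$\Phi_{\pi,s}(\ou) = |\det m(\ou)|^{s}\delta(m(\ou))^{1/2}\,\pi(m(\ou))\,\pi(\phi_{k(\ou),s}),$$
where $\phi_{k,s}(m) := \Phi(m^{-1}k)|\det m|^{s}\delta(m)^{1/2}$ defines a family in $C_c^\infty(M)$ depending smoothly on $k\in K$ and analytically on $s$, which may be arranged to be bi-invariant under a fixed compact open subgroup $K_0\subset M$ (independent of $k$ and of $s$ in a compact strip). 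In particular $\pi(\phi_{k,s})$ factors through the finite-dimensional subspace $V_\pi^{K_0}$.

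Second, I would decompose the trace $\Tr(\pi(m(\ou))\pi(\phi_{k(\ou),s}))$ as a sum of matrix coefficients over a biorthogonal basis of $V_\pi^{K_0}\times V_{\pi^\vee}^{K_0}$, apply the Harish-Chandra basic estimate $|\langle\pi(m)v,v^\vee\rangle|\leq c_{K_0}\Xi^M(m)\|v\|\|v^\vee\|$ (valid for $\pi\in\Temp(M)$ and $K_0$-fixed vectors with a constant $c_{K_0}$ depending only on $K_0$), and then invoke Cauchy-Schwarz on $V_\pi^{K_0}$ to obtain
$$|\Tr\Phi_{\pi,s}(\ou)| \leq c_{K_0}\,|\det m(\ou)|^{\Re(s)}\delta(m(\ou))^{1/2}\Xi^M(m(\ou))\,\sqrt{\dim V_\pi^{K_0}}\,\|\pi(\phi_{k(\ou),s})\|_{\mathrm{HS}}.$$
Then Fubini and a final Cauchy-Schwarz in the $\pi$-integral bound the right-hand side of \eqref{eq15} in absolute value, up to a multiplicative constant, by the product of the three factors
$$\int_{\oN}|\det m(\ou)|^{\Re(s)}\delta(m(\ou))^{1/2}\Xi^M(m(\ou))\,d\ou,\quad \left(\int_{\Temp_\chi(M)}\dim V_\pi^{K_0}\,\mu_{M,\chi}(\pi)\,d_\chi\pi\right)^{1/2},\quad \sup_{k\in K}\|\phi_{k,s}\|_{L^2(A\backslash M,\chi^{-1})}.$$
The first factor converges for $\Re(s)>0$ by the estimate recalled just before \eqref{eq20}. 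The second factor is finite by applying the Plancherel formula \eqref{Plancherel formula2} to the central-character version of the idempotent $e_{K_0}$, whose $\pi$-trace is exactly $\dim V_\pi^{K_0}$. The third factor is uniformly bounded in $s$ by construction; the $\|\pi(\phi_{k(\ou),s})\|_{\mathrm{HS}}^{2}$ content has been absorbed through the $L^2$-Plancherel isometry $\int_{\Temp_\chi(M)}\|\pi(\phi)\|_{\mathrm{HS}}^{2}\,\mu_{M,\chi}\,d_\chi\pi=\|\phi\|_{L^2(A\backslash M,\chi^{-1})}^{2}$.

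The main technical subtlety lies in the second step: since both trace and operator norms of $\pi(m)\pi(\phi)$ are unitary-invariant in $m$, one cannot extract $\Xi^M(m)$-decay at the norm level alone, and must work matrix-coefficient-wise via Harish-Chandra's estimate, at the cost of a $\sqrt{\dim V_\pi^{K_0}}$ factor then controlled by Plancherel.
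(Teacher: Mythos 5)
Your proof is correct and its core is the same as the paper's: the Iwasawa factorization $\Tr\Phi_{\pi,s}(\ou)=\delta(m(\ou))^{1/2}\lvert\det m(\ou)\rvert^{s}\sum_v\langle\pi(m(\ou))\pi(\varphi_{s,k(\ou)})v,v^\vee\rangle$, the Cowling--Haagerup--Howe bound to extract $\Xi^M(m(\ou))$, and the reduction to the convergence of \eqref{eq20}. The only divergence is in how the $\pi$-integral is tamed. The paper notes that $\varphi_{s,k}$ ranges over a \emph{finite} set as $k$ varies in $K$, so CHH already gives a bound $\lvert\Tr\Phi_{\pi,s}(\ou)\rvert\ll\delta(m(\ou))^{1/2}\lvert\det m(\ou)\rvert^{\Re(s)}\Xi^M(m(\ou))$ that is \emph{uniform in $\pi$}; it then invokes Harish-Chandra's finiteness theorem (only finitely many components of $\Temp_\chi(M)$ carry $J$-fixed vectors, each compact) to conclude that the $\pi$-integral of this constant against $\mu_{M,\chi}\,d_\chi\pi$ is finite. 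You instead keep a $\pi$-dependent bound $\sqrt{\dim V_\pi^{K_0}}\,\lVert\pi(\phi_{k(\ou),s})\rVert_{\mathrm{HS}}$ and integrate it via Cauchy--Schwarz, the Plancherel formula applied to the idempotent, and the $L^2$-Plancherel isometry. Both routes are valid and rest on essentially the same input (the finiteness of $\int\dim V_\pi^{K_0}\mu_{M,\chi}\,d_\chi\pi$ is equivalent to the compact-support observation); the paper's is marginally shorter, yours avoids having to state explicitly that the $\pi$-support is contained in finitely many components. One cosmetic slip: the Iwasawa decomposition $G=NMK$ holds on all of $\oN$, not merely on a dense open subset (you may be thinking of the Bruhat cell $\oN'$); this does not affect the argument.
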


\begin{proof}
Let $J\subset M$ be a compact-open subgroup leaving $\Phi$ invariant on the left. Then, for every $g\in G$ and $\pi\in \Temp_\chi(M)$, the operator $\Phi_{\pi,s}(g)$ has image in $V_\pi^J$. Let us fix a basis $\mathcal{B}_\pi^J$ of the finite-dimensional vector space $V_\pi^J$ and let $(\mathcal{B}_\pi^{J})^\vee=\{ v^\vee\mid v\in \mathcal{B}_\pi^J\}$ be the dual basis of $(V_\pi^J)^*=(V_\pi^\vee)^J$. Then,
\[\begin{aligned}
\displaystyle \Tra \Phi_{\pi,s}(g) & =\sum_{v\in \mathcal{B}_\pi^J} \int_{M/A} \Phi_s(m^{-1}g) \delta(m)^{1/2} \lvert \det(m)\rvert^s \langle \pi(m)v,v^\vee\rangle dm \\
 & =\sum_{v\in \mathcal{B}_\pi^J} \int_{M} \Phi(m^{-1}g) \delta(m)^{1/2} \lvert \det(m)\rvert^s \langle \pi(m)v,v^\vee\rangle dm.
\end{aligned}\]
We fix as before a special maximal compact subgroup $K\subset G$ and we choose, for $\ou\in \oN$, $m(\ou)\in M$ and $k(\ou)\in K$ such that $\ou\in Nm(\ou)k(\ou)$. Since $\Phi$ is $N$-invariant on the left and by the change of variable $m\mapsto m(\ou)m$, we get
\begin{align}\label{eq19}
\displaystyle \Tra \Phi_{\pi,s}(\ou)& =\delta(m(\ou))^{1/2}\lvert \det(m(\ou))\rvert^s \sum_{v\in \mathcal{B}_\pi^J} \int_{M} \Phi(m^{-1}k(\ou)) \delta(m)^{1/2} \lvert \det(m)\rvert^s \langle \pi(m(\ou)m)v,v^\vee\rangle dm \\
\nonumber & =\delta(m(\ou))^{1/2}\lvert \det(m(\ou))\rvert^s \sum_{v\in \mathcal{B}_\pi^J} \langle \pi(m(\ou)) \pi(\varphi_{s,k(\ou)})v,v^\vee\rangle,
\end{align}
where, for $k\in K$, we have denoted by $\varphi_{s,k}\in C_c^\infty(M)$ the function $m\mapsto \Phi(m^{-1}k(\ou)) \delta(m)^{1/2} \lvert \det(m)\rvert^s$.

As $k$ runs over $K$, the function $\varphi_{s,k}$ describes a finite set. Thus by \cite{CoHaHo}, we have an estimate
$$\displaystyle \sum_{v\in \mathcal{B}_\pi^J} \lvert \langle \pi(m) \pi(\varphi_{s,k})v,v^\vee\rangle\rvert\ll \Xi^M(m),$$
for $m\in M$, $k\in K$ and $\pi\in \Temp(M)$; the implicit multiplicative constant being in particular independent of $\pi$. In particular, combining this with \eqref{eq19}, we get
$$\displaystyle \lvert \Tra \Phi_{\pi,s}(\ou)\rvert\ll \delta(m(\ou))^{1/2}\lvert \det(m(\ou))\rvert^{\Re(s)} \Xi^M(m(\ou)).$$

We know from Harish-Chandra \cite[th\'eor\`eme VIII.1.2]{WaldPlanch} that there are only finitely many connected components $\mathcal{O}\subset \Temp(M)$ such that $\pi^J\neq 0$ for some $\pi\in \mathcal{O}$. In particular, the function $\pi\mapsto \Tra \Phi_{\pi,s}(\ou)$ is supported on a compact subset of $\Temp_\chi(M)$, independent of $\ou$ (and $s$). From this, the lemma reduces again to the convergence of \eqref{eq20} for $\Re(s)>0$.
\end{proof}
	
From the previous lemma as well as the identification \eqref{eq6}, for $\Re(s)>0$ we have
	\begin{equation}\label{eq7}
		\displaystyle \int_{\oN} \Phi_s(\ou)d\ou=\int_{\Temp_\chi(M)}\int_{\oN} \Tr \Phi_{\pi,s}(\ou) d\ou d\mu_{M,\chi}(\pi)=\int_{\Temp_\chi(M)} \Tr((\cM(\pi,s)\Phi_{\pi,s})(1)) d\mu_{M,\chi}(\pi)
	\end{equation}
	where, in the last expression, we have used the slight abuse of notation of denoting by $\cM(\pi,s)\Phi_{\pi,s}$ the image of $ \Phi_{\pi,s}$ by the operator
	$$\displaystyle \cM(\pi,s)\otimes \Id: \rI_{P}^G(\pi_s)\otimes (\pi_s)^\vee\to \rI_{\overline{P}}^G(\pi_s)\otimes (\pi_s)^\vee\simeq \rI_{\overline{P}}^G(\pi_s\otimes (\pi_s)^\vee).$$
	
	Set 
	$$\displaystyle r(\pi,s)=\epsilon^{\Sh}(s,\pi,\Sym^2,\psi)\frac{L^{\Sh}(1-s,\pi^\vee,\Sym^2)}{L^{\Sh}(s,\pi,\Sym^2)},\; \pi\in \Temp(M), \; s\in \bC,$$
	where $\epsilon^{\Sh}(s,\pi,\Sym^2,\psi)$ and $L^{\Sh}(s,\pi,\Sym^2)$ stand for the local $\epsilon$- and $L$-factors of the symmetric square defined by Shahidi \cite{Sha90}. We know from \cite{HenLfn}, that they coincide with their Galois counterparts $L(s,\pi,\Sym^2)$ and $\epsilon(s,\pi,\Sym^2,\psi)$ (defined through the local Langlands correspondence for $M$), in the latter case up to a unit, i.e. there exists $u_1(\pi)\in \mathbb{S}^1$ such that
	\begin{equation}\label{eq17}
		\displaystyle r(\pi,s)=u_1(\pi)\cdot \gamma(s,\pi,\Sym^2,\psi).
	\end{equation}
	Moreover, by \cite[Theorem 7.7]{Sha90}, the normalised intertwining operators
	$$\displaystyle \cN(s,\pi):=r(s,\pi)\cM(s,\pi)$$
	are regular and unitary (up to an absolute constant depending on the choice of Haar measure on $N$) for $s$ imaginary. In particular, \eqref{eq7} can be rewritten as
	\begin{equation*}
		\displaystyle \int_{\oN} \Phi_s(\ou)d\ou=\int_{\Temp_\chi(M)} \Tr((\cN(\pi,s)\Phi_{\pi,s})(1)) r(s,\pi)^{-1}d\mu_{M,\chi}(\pi)
	\end{equation*}
	where, for $s$ sufficiently close to the imaginary line $i\bR$, the function $\pi\mapsto \Tr(\cN(\pi,s)f_{\pi,s}(1))$ is an element of $C_c^\infty(\Temp_\chi(M))$ that varies continuously with $s$ (this can be checked using the criterion at the end of Remark \ref{rmk}). Thus, from Proposition \ref{prop1 spectral limit} as well as Remark \ref{rmk}, we obtain
	\begin{equation}\label{eq18}
		\displaystyle \lim\limits_{s\to 0} \gamma(s,\mathbf{1}_F,\psi) \int_{\oN} \Phi_s(\ou)d\ou=\frac{2}{d}\chi(-1)^{d-1} \int_{\Temp_\chi^{\orth}(M)} \Tr((\cN(\pi,0)\Phi_{\pi,0})(1)) u_1(\pi)\frac{\gamma^*(0,\pi,\wedge^2,\psi)}{\lvert S_\pi^{+}\rvert} d\pi.
	\end{equation}
	
	Fix $w\in \tM$ and let $\pi\in \Temp_\chi^{\orth}(M)$. Then, we have the intertwining operators
	$$\displaystyle \rI_{P}^G(\tpi(w)):\rI_{P}^G(\pi)\to \rI_{P}^G(\pi^w),\;\; L(w): \rI_{P}^G(\pi^w)\to \rI_{\overline{P}}^G(\pi)$$
	given by $(\rI_{P}^G(\tpi(w))e)(g)=\tpi(w)e(g)$ and $(L(w)e)(g)=e(w^{-1}g)$ respectively. Note that both the operators $\rI_{P}^G(\tpi(w))$ and $\tilde{\delta}(w)L(w)$ are unitary. For the first operator because we have chosen $\tpi$ to be unitary and  in the second case by definition of the scalar products on $\rI_{P}^G(\pi^w)$, $\rI_{\overline{P}}^G(\pi)$ as well as the definition of $\tilde{\delta}(w)$. Moreover, by Proposition \ref{prop irred generic}, for $\pi\in \Temp_\chi^{\orth}(M)$ in general position, the induced representations $\rI_P^G(\pi)$, $\rI_{\overline{P}}^G(\pi)$ are irreducible and it follows, by Schur lemma, that the unitary intertwining operator $\cN(\pi,0)$ coincides up to a unit with the composition $\tilde{\delta}(w)L(w)\circ \rI_{P}^G(\tpi(w))$ i.e. we can find $u_2(\tpi)\in \mathbb{S}^1$ such that
	$$\displaystyle \cN(\pi,0)=u_2(\tpi)\tilde{\delta}(w) L(w)\circ \rI_{P}^G(\tpi(w)).$$
	This implies that, for every $v\in V_\pi$, we have
	\[\begin{aligned}
		\displaystyle (u_2(\tpi)\tilde{\delta}(w))^{-1}(\cN(\pi,0)\Phi_{\pi,0})(1)v & = \tpi(w) \Phi_{\pi,0}(w^{-1})v= \tpi(w)  \int_{M/A} \Phi_0(m^{-1}w^{-1}) \delta(m)^{1/2} \pi(m)v dm \\
		& =\int_{M} \Phi(m^{-1}w^{-1}) \delta(m)^{1/2} \tpi(wm)v dm \\
		& = \widetilde{\delta}(w)^{-1}\int_{\tM} \Phi^\vee(\tm) \widetilde{\delta}(\tm)^{1/2} \tpi(\tm)v dm= \widetilde{\delta}(w)^{-1} \tpi(f)v,
	\end{aligned}\]
	where the last line follows from \eqref{eq tf}. This allows to rewrite \eqref{eq18} as
	\[\begin{aligned}
		\displaystyle \lim\limits_{s\to 0} \gamma(s,\mathbf{1}_F,\psi) \int_{\oN} \Phi_s(\ou)d\ou =C_2\int_{\Temp_\chi^{\orth}(M)} \Theta_{\tpi}(f) u_3(\tpi)\frac{\gamma^*(0,\pi,\wedge^2,\psi)}{\lvert S_\pi^{\orth}\rvert} d\pi
	\end{aligned}\]
	where $C_2:=\frac{2}{d}$ and $u_3(\tpi):=u_1(\pi)u_2(\tpi)\chi(-1)^{d-1}$.
	
	Combining the above identity with Proposition \ref{prop} proves Theorem \ref{theo spectraldec orbint}.

	\section{Twisted endoscopy and Plancherel densities}\label{Sect twisted endoscopy}
	
	\subsection{Symplectic and even orthogonal groups}
	
	Let $n\geq 1$ be an integer and let $W$ be vector space over $F$ of dimension $2n$ that is equipped with a nondegenerate symmetric or alternating form $q$. We let
	$$\displaystyle H=G(W,q)^0$$
	be the neutral component of the group of linear automorphisms of $W$ preserving $q$. Thus if $q$ is symmetric, $H$ is an even special orthogonal group whereas if $q$ is alternating it is a symplectic group.
	
	Assume first that $q$ is symmetric. In this case, we denote by $\disc(q)$ its discriminant normalised in the following way: if $(w_i)_{i=1}^{2n}$ is a basis of $W$ then $\disc(q)$ is the following square class 
	$$\displaystyle \disc(q):=(-1)^n \det((q(w_i,w_j))_{i,j})\in F^\times/F^{\times,2}.$$
	We set $E=F[\sqrt{\disc(q)}]$. Thus, $E=F$ if $\disc(q)=1$ and otherwise $E/F$ is a quadratic extension. We denote by $\chi: F^\times \to \{ \pm 1\}$ the quadratic character associated to this extension, that is the unique character with kernel the image of the norm map $N_{E/F}:E^\times\to F^\times$. As it will be convenient at few points, we also fix an element $g^+\in G(W,q)\setminus G(W,q)^0$ of order $2$, where $G(W,q)=\mathrm{O}(W,q)$ denotes the full orthogonal group of the quadratic space $(W,q)$, and by $\theta: g\mapsto g^+ gg^+$ the corresponding involutive automorphism of $H$. Of course, the class of $\theta$ modulo the group of inner automorphisms of $H$ does not depend on the choice of $g^+$. 
	
	Still assuming that $q$ is symmetric, we also fix another quadratic space $(W^*,q^*)$ of dimension $2n$ such that $\disc(q^*)=\disc(q)$ and $H^*:=G(W^*,q^*)^0$ is quasi-split. Then, $H^*$ is a pure inner form of $H$. We fix similarly an involutive automorphism $\theta^*$ of $H^*$ corresponding to the choice of some element $(g^*)^+\in G(W^*,q^*)\setminus G(W^*,q^*)^0$ of order $2$.
	
	Assume now that $q$ is symplectic. Then, all of these constructions are not necessary but in order to allow for a more uniform treatment we will simply set $E=F$, $\chi=1$, $H=H^*$ and $\theta=\theta^*=1$.
	
	Set
	$$\displaystyle d=\left\{\begin{array}{ll}
		2n & \mbox{ if } q \mbox{ is symmetric,} \\
		2n+1 & \mbox{ if } q \mbox{ is alternating.}
	\end{array} \right.$$
	Let $V$ be a vector space over $F$ of dimension $d$ and denote by $(M,\tM)$ the corresponding twisted space defined as in Subsection \ref{sect twisted linear group}. We again denote by $A$ the center of $M$, identified in the usual way with $F^\times$; in particular we will identify the quadratic character $\chi$ with a character of $A$.
	
	\subsection{Endoscopic transfers}
	
	The quasi-split group $H^*$ is both an endoscopic group of $H$ and of the twisted space $\tM$. More precisely, in the case of $H$ this just come from the equality of $L$-groups ${}^L H={}^L H^*$ (and the fact that $H^*$ is quasi-split) whereas for $\tM$ we can fix an endoscopic datum as in \cite{Waldtransfact}. This, in particular, entails a notion of {\it endoscopic transfer} from test functions on $\tM$ (or $H$) to test functions on $H^*$ that we recall below.
	
	\vspace{2mm}
	
	\begin{paragr}[Geometric correspondences.]
		
		We have correspondences between sufficiently regular semisimple conjugacy classes in, on the one hand, $H$ and $\tM$ and, on the other hand, $H^*$.
		
		Recall that an element $\delta\in H$ is said to be {\em semisimple strongly regular} if its centralizer $H_\delta=Z_H(\delta)$ is a torus. Semisimple strongly regular elements of $H^*$ are defined in the same way. We denote by $H_{\sr}\subset H$ and $H^*_{\sr}\subset H^*$ the open subsets of semisimple strongly regular elements.
		
		The notion of semisimple strongly regular element in the twisted space $\tM$ is defined as follows: we say that $\gamma\in \tM$ is {\em semisimple strongly regular} if its centralizer $Z_M(\gamma)$ in $M$ is abelian and its neutral component $M_\gamma=Z_M(\gamma)^0$ is a torus. We again denote by $\tM_{\sr}\subset \tM$ the open subset of semisimple strongly regular elements.
		
		First consider the correspondence between semisimple strongly regular conjugacy classes in $H$ and $H^*$. There is a natural inner class of isomorphisms $H_{\overline{F}}\simeq H^*_{\overline{F}}$, namely those induced from isomorphisms of symplectic or quadratic spaces $(W,q)\otimes_F \overline{F}\simeq (W^*,q^*)\otimes_F \overline{F}$, and two strongly regular semisimple elements $\delta^*\in H^*_{\sr}$ and $\delta\in H_{\sr}$ {\em correspond to each other} if their $H^*_{\overline{F}}$- and $H_{\overline{F}}$-conjugacy classes are mapped to each other by such isomorphism.

		On the other hand, the correspondence between (semisimple strongly regular) conjugacy classes in $\tM$ and $H^*$ can be explicitely described using characteristic polynomials as follows. Let $F[T]_{u,d}$ be the space of unitary polynomials of degree $d$ over $F$. We have two maps
		\begin{equation}\label{GIT correspondence}
			\displaystyle \cC_{H^*}:\mathrm{H}^* \to F[T]_{u,d} \mbox{ and }  \cC_{\tM}:\widetilde{\mathrm{M}}\to F[T]_{u,d}
		\end{equation}
	defined as follows: for $\gamma\in \tM$, that we view as an isomorphism $\gamma: V\to V^*$,
	$$\cC_{\tM}(\gamma)=\det(T\Id_V-{}^t \gamma^{-1}\gamma)$$
	is the characteristic polynomial of ${}^t \gamma^{-1}\gamma$ (seen as a linear endomorphism of $V$), whereas, for $\delta^*\in H^*$, denoting by
	$$\chi_{\delta^*}=\det(T\Id_W-\delta^*)$$
	the characteristic polynomials of $\delta^*$ (seen as a linear endomorphism of $W^*$),
	\begin{equation}\label{corresp polchar}
		\displaystyle \cC_{H^*}(\delta^*)=\left\{\begin{array}{ll}
			\chi_{\delta^*}(-T) & \mbox{ if }  q \mbox{ is symmetric,} \\
			\chi_{\delta^*}(T)(T-1) & \mbox{ if } q \mbox{ is alternating.}
		\end{array} \right.
	\end{equation}
	Then, two strongly regular semisimple elements $\delta^*\in H^*_{\sr}$ $\gamma\in \tM_{\sr}$ are said to {\em correspond to each other} if $\cC_{H^*}(\delta^*)=\cC_{\tM}(\gamma)$.
		
	\end{paragr}

	\begin{paragr}[Transfer factors.]
		We can define, following \cite{KottShel}, {\em transfer factors} between $H^*$ and $\tM$, that is a function
		$$\displaystyle \Delta: H^*_{\sr}\times \tM_{\sr}\to \mathbb{C}$$
		with the property that $\Delta(\delta^*,\gamma)=0$ unless $\delta^*\in H^*_{\sr}$ and $\gamma\in \tM_{\sr}$ correspond to each other. Since our calculations will all be done up to an absolute constant, we won't need to deal with the delicate issue of normalizing precisely these transfer factors. More precisely, we simply require that for every $(\delta^*_1,\delta^*_2,\gamma_1,\gamma_2)\in H^*_{\sr}\times H^*_{\sr} \times \tM_{\sr}\times \tM_{\sr}$ with $\delta^*_1$ corresponding to $\gamma_1$ and $\delta^*_2$ corresponding to $\gamma_2$, the ratio
		$$\displaystyle \Delta(\delta^*_1,\gamma_1)\Delta(\delta^*_2,\gamma_2)^{-1}$$
		is equal to the canonical transfer bifactor defined in  \cite[Chap. I, Sect. 2.2]{MWStab1}. In particular, unlike Kottwitz-Shelstad \cite{KottShel}, we do not include in this bifactor the usual ratio of Weyl discriminants (denoted by $\Delta_{IV}$ in {\em loc.\ cit.}\ ).
		
		We have the following simple formula for the behavior of transfer factors under multiplication by the center $A$ of $M$ (see \cite[lemme I.2.7]{MWStab1} for a much more general statement): for every $(\delta^*,\gamma)\in H^*_{\sr}\times \tM_{\sr}$ and $a\in A$ we have
		\begin{equation}\label{central character transfer factors}
			\Delta(\delta^*,a\gamma)=\chi(a)\Delta(\delta^*,\gamma).
		\end{equation}
		
		\begin{rem}
			It turns out that when $H$ is a symplectic, the transfer factor $\Delta$ is constant on pairs of matching regular semisimple elements and can therefore be taken to be identically $1$ there. This stems from the fact that in this case $H$ is the {\em principal endoscopic group} of $\tM$. However, we will not use this fact explicitely in the sequel.
			Similarly, there are transfer factors between $H^*$ and $H$ but these are constant equal to one and so we don't need to introduce them formally.
		\end{rem}
	\end{paragr} 
	
	\begin{paragr}[Smooth transfer.]
		First, we recall the notion of stable orbital integrals for functions on $H^*$. Let $f^{H^*}\in C_c^\infty(H^*)$ and $\delta^*\in H^*_{\sr}$. Then, the {\em stable orbital integral} of $f^{H^*}$ at $\delta^*$ is defined as
		$$\displaystyle \SI_{\delta^*}(f^{H^*})=\sum_{\epsilon^*\sim_{\stab} \delta^*} \rI_{\epsilon^*}(f^{H^*}),$$
		where the sum runs over $H^*$-conjugacy classes of elements $\epsilon^*\in H^*$ in the same {\em stable conjugacy class} as $\delta^*$, that is elements which are conjugated to $\delta^*$ in $H^*_{\overline{F}}$, and $\rI_{\epsilon^*}(f^{H^*})$ denotes the orbital integral of $f^{H^*}$ on the conjugacy class of $\epsilon^*$ normalised by
		$$\displaystyle \rI_{\epsilon^*}(f^{H^*})=D^{H^*}(\epsilon^*)^{1/2}\int_{H^*_{\epsilon^*}\backslash H^*} f^{H^*}({h}^{-1}\epsilon^* h) \frac{dh}{dh_{\epsilon^*}},$$
		i.e. including the square root of the Weyl discriminant
		$$\displaystyle D^{H^*}(\epsilon^*)=\left\lvert \det(1-\Ad(\epsilon^*)\mid \mathfrak{h}^*/\mathfrak{h}^*_{\epsilon^*})\right\rvert.$$
		These orbital integrals depend implicitely on the choice of Haar measures on the centralizers $H_{\epsilon^*}^*$ (as well as on $H^*$) and we take the $\psi$-Haar measures as recalled in \S \ref{S measures}. The important property here being that these measures are {\em compatible} i.e. that they correspond to one chosen Haar measure on $H^*_{\delta^*}$ by the isomorphisms $H^*_{\epsilon^*}\simeq H^*_{\delta^*}$ induced by the conjugation action of any element $h\in H^*_{\overline{F}}$ sending $\epsilon^*$ to $\delta^*$. (This follows directly from the definition of the $\psi$-Haar measures since these isomorphisms come from isomorphisms of linear groups defined over $F$.)
		
		We define in a similar way the stable orbital integrals $\SI_{\delta}(f^H)$ for $f^H\in C_c^\infty(H)$ and $\delta\in H_{\sr}$.
		
		Then, a function $f^{H^*}\in C_c^\infty(H^*)$ is said to be a {\em transfer} of a function $f^H\in C_c^\infty(H)$ if for every matching elements $(\delta^*,\delta)\in H^*_{\sr}\times H_{\sr}$, we have the equality of stable orbital integrals
		\begin{equation*}
			\displaystyle \SI_{\delta^*}(f^{H^*})=\SI_{\delta}(f^H),
		\end{equation*}
		whereas if $\delta^*\in H^*_{\sr}$ does not correspond to any element $\delta\in H$,
		\begin{equation*}
			\displaystyle \SI_{\delta^*}(f^{H^*})=0.
		\end{equation*}
		
		In a similar vein, a function $f^{H^*}\in C_c^\infty(H^*)$ is said to be a {\em transfer} of a function $f^{\tM}\in C_c^\infty(\tM)$ if for every $\delta^*\in H^*_{\sr}$ we have
		$$\displaystyle \SI_{\delta^*}(f^{H^*})=\sum_{\gamma} \Delta(\delta^*,\gamma) \rI_\gamma(f^{\tM}),$$
		where the sum runs over $M$-conjugacy classes in $\tM_{\sr}$ corresponding to $\delta^*$ and the orbital integrals $\rI_\gamma(f^{\tM})$ are given by
		$$\displaystyle \rI_\gamma(f^{\tM})=D^{\tM}(\gamma)^{1/2}\int_{M_\gamma\backslash M} f^{\tM}(m^{-1}\gamma m) \frac{dm}{dm_\gamma},$$
		with
		$$\displaystyle D^{\tM}(\gamma)=\left\lvert \det(1-\Ad(\gamma)\mid \mathfrak{m}/\mathfrak{m}_\gamma)\right\rvert.$$
		Once again, the Haar measures in the above integral are chosen to be the $\psi$-Haar measures.
		
		From \eqref{central character transfer factors}, we deduce the following property: if $f^{H^*}$ is a transfer of $f^{\tM}$ then it is also a transfer of $\chi(a)L(a)f^{\tM}$ for every $a\in A$.
		
		Thanks to the proof by Ng\^o of the fundamental lemma \cite{Ngo} and the work of Waldspurger \cite{Walchgtcar} \cite{Waldendtordue}, the existence of endoscopic transfer is known to exist in full generality. We summarize this as well as the description of the image of transfer from $\tM$ (see \cite[Corollary 2.1.2]{artbook} or \cite[proposition I.4.11]{MWStab1} for a general statement) in the following theorem.
		
		\begin{theo}[Ng\^o, Waldspurger]\label{prop image transfer}
			Every function $f^H\in C_c^\infty(H)$ (resp. $f^{\tM}\in C_c^\infty(\tM)$) admits a transfer $f^{H^*}\in C_c^\infty(H^*)$. Moreover, the image of the transfer $f^{\tM}\to f^{H^*}$ consists of all functions with stable orbital integrals invariant by $\theta^*$, in other words: a function $f^{H^*}\in C_c^\infty(H^*)$ is the transfer of some function $f^{\tM}\in C_c^\infty(\tM)$ if and only if for every $\delta^*\in H^*$ we have
			$$\SI_{\theta^*(\delta^*)}(f^{H^*})=\SI_{\delta^*}(f^{H^*}).$$
			In particular, the subspace of $\theta^*$-invariant functions $C_c^\infty(H^*)^{\theta^*}\subset C_c^\infty(H^*)$ is in the image of the transfer.
		\end{theo}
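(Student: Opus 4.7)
The statement is essentially a summary of deep results from the literature on (twisted) endoscopy, so the plan is to organize the proof as a sequence of reductions to cited theorems, together with one piece of genuine bookkeeping for the image characterization.

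First I would dispatch the existence of ordinary transfer $f^H \mapsto f^{H^*}$. Since $H^*$ is just the quasi-split pure inner form of $H$ and the relevant transfer factors are trivial (equal to $1$), this is the standard inner-form transfer, which is a consequence of the smooth transfer conjecture proved by Waldspurger using Ng\^o's fundamental lemma (plus Waldspurger's earlier reduction from smooth transfer to the fundamental lemma, and the accompanying results for nonstandard endoscopy of Lie algebras). Next, for the existence of twisted transfer $f^{\tM}\mapsto f^{H^*}$, I would invoke Waldspurger's result in \cite{Waldendtordue} that the existence of twisted transfer follows from the (ordinary and twisted) fundamental lemmas, which in turn are supplied by \cite{Ngo} and its consequences.

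For the image characterization, I would separate the two directions. The easy direction, that every $f^{H^*}$ in the image of transfer from $\tM$ is $\theta^*$-invariant in the sense that $\SI_{\theta^*(\delta^*)}(f^{H^*})=\SI_{\delta^*}(f^{H^*})$, would be verified as follows: one checks from the definition of the endoscopic datum chosen to identify $H^*$ with an endoscopic group of $\tM$ (as in \cite{Waldtransfact}) that this datum is preserved by $\theta^*$. This gives a compatibility $\Delta(\theta^*(\delta^*),\gamma)=\Delta(\delta^*,\gamma)$ for matching pairs, from which the $\theta^*$-invariance of the stable orbital integrals of $f^{H^*}$ is immediate.

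The reverse direction, that any $f^{H^*}\in C_c^\infty(H^*)$ with $\theta^*$-invariant stable orbital integrals lies in the image, is the substantive part and I would simply cite it from \cite[Corollary 2.1.2]{artbook} or \cite[proposition I.4.11]{MWStab1}. The main obstacle, and the reason I would be unable to give a self-contained argument of comparable length, is precisely this converse: it rests on a careful analysis of the space of stable distributions on $H^*$ and identifies those that are killed by pairing with transfers from $\tM$ as the ones on which $\theta^*$ acts by $-1$, a result requiring the full machinery of stabilization of the twisted trace formula developed in \cite{MWStab1}. Finally, the concluding sentence (that $\theta^*$-invariant functions lie in the image) is just the observation that $f^{H^*}\in C_c^\infty(H^*)^{\theta^*}$ certainly has $\theta^*$-invariant stable orbital integrals, so the general characterization applies.
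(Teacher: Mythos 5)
Your proposal matches the paper exactly: the paper offers no independent proof of this theorem but simply cites Ng\^o's fundamental lemma together with Waldspurger's reduction of (twisted) smooth transfer to it for the existence statements, and \cite[Corollary 2.1.2]{artbook} or \cite[proposition I.4.11]{MWStab1} for the characterization of the image, which is precisely your plan. Your extra remarks on the easy direction ($\theta^*$-invariance of transfers) and on why the converse genuinely requires the stabilization machinery are accurate but go slightly beyond what the paper records.
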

		
	\end{paragr}

	\subsection{Transfer of central Diracs}\label{Sect transfer dirac}
	
	Let $\epsilon\in Z(H)=Z(H^*)=\{ 1,-1\}$ be the central element given by
	$$\displaystyle \epsilon=(-1)^d.$$
	Thus, $\epsilon=+1$ in the special orthogonal case whereas $\epsilon=-1$ in the symplectic case.
	
	\begin{prop}\label{prop transfer dirac}
		\begin{enumerate}[(i)]
			\item For every function $f^H\in C_c^\infty(H)$ with transfer $f^{H^*}\in C_c^\infty(H^*)$ we have
			\begin{equation*}
				\displaystyle f^{H^*}(\epsilon)=f^H(\epsilon).
			\end{equation*}
		
		\item There exists a constant $c\in \mathbb{C}^\times$ such that for every $f^{\tilde{M}}\in C_c^\infty(\tilde{M})$ with transfer $f^{H^*}\in C_c^\infty(H^*)$,
		\begin{equation*}
			\displaystyle f^{H^*}(\epsilon)=c\rI_{\chi}(f^{\tM}),
		\end{equation*}
		where we recall that the distribution on the right hand side was defined by \eqref{def Ichi}.
		
	\end{enumerate}
	\end{prop}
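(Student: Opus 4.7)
The plan is to deduce both identities from the respective endoscopic transfer relations at strongly regular semisimple elements by descending to the central element $\epsilon$, exploiting the fact that at a central element the stable orbital integral collapses to pointwise evaluation.

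For part (i), since $\epsilon \in Z(H) = Z(H^*)$ is central, one has $H_\epsilon = H$, $D^H(\epsilon) = 1$, the stable conjugacy class of $\epsilon$ in both groups reduces to $\{\epsilon\}$, and the $\psi$-Haar measures on the centralizers are compatible with those on the ambient groups. It follows directly from the definitions that
\begin{equation*}
\SI_\epsilon(f^H) = f^H(\epsilon), \qquad \SI_\epsilon(f^{H^*}) = f^{H^*}(\epsilon).
\end{equation*}
The identity then reduces to extending the transfer relation $\SI_\delta(f^H) = \SI_{\delta^*}(f^{H^*})$ from strongly regular matching pairs $\delta \leftrightarrow \delta^*$ down to $(\epsilon, \epsilon)$. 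I would justify this via Harish-Chandra descent near $\epsilon$: both stable orbital integrals admit germ expansions in terms of unipotent integrals on the centralizers $H_\epsilon = H$ and $H^*_\epsilon = H^*$; the descent between these pure inner forms is trivial, so the identity germ on each side contributes exactly the pointwise value, yielding the matching.

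For part (ii) I would proceed in parallel. The same descent principle extends the twisted transfer relation
\begin{equation*}
\SI_{\delta^*}(f^{H^*}) = \sum_{\gamma \leftrightarrow \delta^*} \Delta(\delta^*, \gamma) I_\gamma(f^{\tM})
\end{equation*}
from strongly regular $\delta^*$ to $\delta^* = \epsilon$, yielding $f^{H^*}(\epsilon) = \sum_\gamma \Delta(\epsilon, \gamma) I_\gamma(f^{\tM})$. Next I would identify the matching orbits via characteristic polynomials: using the decomposition $2\gamma = \gamma^s + \gamma^a$, a direct computation of $\cC_{\tM}(\gamma) = \det(T\,\Id_V - {}^t\gamma^{-1}\gamma)$ shows
\begin{equation*}
\cC_{\tM}(\gamma) = (T+1)^{d-1}\bigl(T-(-1)^{d-1}\bigr) = \cC_{H^*}(\epsilon)
\end{equation*}
precisely when $\gamma^s$ has rank one and $\gamma^a$ has maximal rank, i.e. when $\gamma$ is $M$-conjugate to some $\gamma_t$, $t \in F^\times/F^{\times,2}$ (and, if $d$ is even, possibly to $\gamma_0$). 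The transformation law \eqref{central character transfer factors}, applied via $a\gamma_1 = \gamma_a$ for $a \in F^\times$, then gives $\Delta(\epsilon, \gamma_t) = \chi(t)\Delta(\epsilon, \gamma_1)$, so
\begin{equation*}
f^{H^*}(\epsilon) = \Delta(\epsilon, \gamma_1)\sum_{t \in F^\times/F^{\times,2}} \chi(t)\, I_{\gamma_t}(f^{\tM}).
\end{equation*}
Since $I_{\gamma_t}$ differs from $O(\gamma_t, \cdot)$ only by the Weyl discriminant $D^{\tM}(\gamma_t)^{1/2}$ and a measure-normalization factor, both of which are $t$-independent thanks to the compatibility built into the invariant measures $d_t\tm$, the whole sum collapses into $c \cdot I_\chi(f^{\tM})$ after absorbing the sign $\chi(-1)$ into the absolute constant $c$.

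The main technical obstacle I foresee is the rigorous justification of the descent from strongly regular to singular semisimple elements in the twisted setting, and in particular, when $d$ is even, the treatment of the boundary orbit $\Ad(M)\gamma_0$, which lies in the closure of $\Ad(M)\gamma_{-1}$. Its potential contribution to the extended transfer identity must be shown to either vanish or to be absorbed into the regularized orbital integrals $O(\gamma_t, \cdot)$; I expect this to follow from Rao's convergence theorem for unipotent orbital integrals combined with the explicit form of the twisted Shalika germ expansion at $\epsilon$. Once these descent issues are settled, the computation of the transfer factor reduces entirely to the transformation law \eqref{central character transfer factors}, with the absolute constant $c$ absorbing all remaining normalization choices.
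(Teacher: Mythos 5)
For part (i) your plan is essentially the paper's, except that the paper does not try to rederive the matching of identity germs: it quotes \cite[Proposition 2]{KottTam} directly, noting that the Kottwitz signs $e(H)=e(H^*)=1$. Your assertion that ``the descent between these pure inner forms is trivial, so the identity germ on each side contributes exactly the pointwise value'' \emph{is} Kottwitz's theorem; it is a genuine input (comparison of Shalika germs at the identity across inner forms, with signs), not a formality, so you should cite it rather than claim it follows from a routine germ expansion.

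Part (ii) has a genuine gap, in two places. First, you extend the twisted transfer identity to $\delta^*=\epsilon$ and write $f^{H^*}(\epsilon)=\sum_\gamma \Delta(\epsilon,\gamma)\rI_\gamma(f^{\tM})$; but $\Delta$ is only defined on strongly regular pairs, $\epsilon$ is central and the $\gamma_t$ are highly singular (their centralizers are symplectic groups), so ``$\Delta(\epsilon,\gamma_t)$'' has no meaning, and the limit of the strongly regular identity at a singular point produces germ expansions on both sides, not a naive evaluation. The paper avoids this entirely: it invokes the stability of the central Dirac ($f^{H^*}\mapsto f^{H^*}(\epsilon)$ kills functions with vanishing stable orbital integrals, by \cite[Proposition 1]{KottTam}) to conclude abstractly that $f^{H^*}(\epsilon)=\rJ(f^{\tM})$ for \emph{some} invariant distribution $\rJ$ on $\tM$, and only then pins $\rJ$ down. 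Second, and more seriously, your identification of the relevant orbits is wrong: the fiber of $\cC_{\tM}$ over $\cC_{H^*}(\epsilon)$ is \emph{not} just the union of the $\Ad(M)\gamma_t$. In the orthogonal case it consists of all $M$-conjugates of $\gamma_0 u$ with $u$ unipotent in $M_{\gamma_0}\simeq\Sp_{2n}$ (for $u\in\Sp(\gamma_0)$ one computes $\gamma^s(x,y)=\gamma_0((u-u^{-1})x,y)$, whose rank is unbounded as $u$ ranges over unipotent classes), so $\rJ$ is a priori a linear combination of orbital integrals over \emph{all} these unipotent classes, of which only the minimal one gives the $\gamma_t$. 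Ruling out the other classes (including the boundary orbit $\Ad(M)\gamma_0$ when $d$ is even, which you flag but do not resolve) is the heart of the paper's argument: a dilation/homogeneity comparison on the Lie algebras via the exponential map, showing $\rJ(f^{\tM}_{t^2})=\lvert t\rvert^{\delta(M_{\gamma_0})-\delta(H^*)}\rJ(f^{\tM})$ with $\delta(M_{\gamma_0})-\delta(H^*)=2n$, which by the Harish-Chandra homogeneity of unipotent orbital integrals forces $c_u=0$ unless $\dim(\Ad(M_{\gamma_0})u)=2n$, i.e.\ unless $u$ is minimal. Your proposal contains no mechanism playing this role. The final step --- that the surviving coefficients satisfy $c_{ta}=\chi(a)c_t$ by \eqref{central character transfer factors}, so that $\rJ$ is proportional to $\rI_\chi$ --- does agree with the paper.
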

	
	\begin{proof}
		$(i)$ follows from \cite[Proposition 2]{KottTam} noting that the Kottwitz signs $e(H)$, $e(H^*)$ appearing in {\em loc.\ cit.}\ are both equal to $1$.
		Let us prove $(ii)$. By \cite[Proposition 1]{KottTam}, the distribution $f^{H^*}\mapsto f^{H^*}(\epsilon)$ is stable, which means that, for $f^{H^*}\in C_c^\infty(H^*)$, if $\SI_{\delta^*}(f^{H^*})=0$ for every strongly regular semisimple element $\delta^*\in H^*$ then $f^{H^*}(\epsilon)=0$. From that, and the definition of endoscopic transfer, we deduce the existence of an invariant distribution $\rJ$ on $\tM$ such that $f^{H^*}(\epsilon)=\rJ(f^{\tM})$ for every $f^{\tilde{M}}\in C_c^\infty(\tilde{M},\chi)$ with transfer $f^{H^*}\in C_c^\infty(H^*)$. Moreover, by Theorem \ref{prop image transfer}, this distribution is nonzero since there certainly exists a $\theta^*$-invariant function $f^{H^*}\in C_c^\infty(H^*)$ such that $f^{H^*}(\epsilon)\neq 0$. It remains to show that $\rJ$ is proportional to $\rI_{\chi}$.
		
		Since the orbit correspondence is described by the morphisms \eqref{GIT correspondence}, for every compact-open subset $\mathcal{U}\subset F[T]_{u,d}$, and every pair of matching functions $(f^{H^*},f^{\tM})\in C_c^\infty(H^*)\times C_c^\infty(\tM)$, the functions $\mathbf{1}_{\mathcal{C}_{H^*}^{-1}(\mathcal{U})} f^{H^*}$ and $\mathbf{1}_{\mathcal{C}_{\tM}^{-1}(\mathcal{U})} f^{\tM}$ also match. (Here, $\mathbf{1}_{\mathcal{C}_{H^*}^{-1}(\mathcal{U})}$, $\mathbf{1}_{\mathcal{C}_{\tM}^{-1}(\mathcal{U})}$ stand for the characteristic of the open subsets $\mathcal{C}_{H^*}^{-1}(\mathcal{U})\subseteq H^*$ and $\mathcal{C}_{\tM}^{-1}(\mathcal{U})\subseteq \tM$ respectively.) This implies that $\rJ$ is supported on the inverse image of $\mathcal{C}_{H^*}(\epsilon)$ by $\mathcal{C}_{\tM}$.
		
		By the explicit description of $\mathcal{C}_{H^*}$ and $\mathcal{C}_{\tM}$ in terms of characteristic polynomials, we see that this fiber is exactly the subset of those elements $\gamma\in \tM$ such that the characteristic polynomial $\chi_{{}^t \gamma^{-1}\gamma}$ of ${}^t \gamma^{-1}\gamma$ is given by
		\begin{equation}\label{eq polchar2}
			\displaystyle \chi_{{}^t \gamma^{-1}\gamma}=\left\{\begin{array}{ll}
				(T+1)^{2n} & \mbox{ if } q \mbox{ is quadratic,} \\
				(T+1)^{2n}(T-1) & \mbox{ if } q \mbox{ is alternating.}
			\end{array}
			\right.
		\end{equation}
		
		We now distinguish between the orthogonal and symplectic case. First assume that $H$ is a special orthogonal group (that is $q$ is symmetric). Choose $\gamma_0\in \tM$ that corresponds to a nondegenerate alternating form on $V$ and let $M_{\gamma_0}$ be its centralizer in $M$ (that is the corresponding symplectic group). Then, the $\gamma\in \tM$ satisfying \eqref{eq polchar2} are exactly those whose semisimple part is $M$-conjugate to $\gamma_0$, in other words these are exactly the $M$-conjugates of elements of the form $\gamma_0 u$ where $u\in M_{\gamma_0}$ is unipotent. More precisely, the map $u\mapsto \gamma_0 u$ set up a bijection between unipotent conjugacy classes in $M_{\gamma_0}$ and $M$-conjugacy classes of elements $\gamma\in \tM$ with $\chi_{{}^t \gamma^{-1}\gamma}=(T+1)^{2n}$. In particular, if we denote by $\mathrm{Unip}(M_{\gamma_0})$ the (finite) set of unipotent conjugacy classes in $M_{\gamma_0}$, the distribution $\rJ$ can be written in a unique way as a linear combination
		\begin{equation}\label{exp unip orb}
			\displaystyle \rJ(f^{\tM})=\sum_{u\in \mathrm{Unip}(M_{\gamma_0})} c_u I_{\gamma_0u}(f^{\tM}),\;\; f^{\tM}\in C_c^\infty(\tM),
		\end{equation}
		where
		$$\displaystyle I_{\gamma_0u}(f^{\tM})=\int_{M_{\gamma_0 u}\backslash M} f(m^{-1}\gamma_0 um) dm$$
		denotes the orbital integral on the conjugacy class of $\gamma_0 u$.
		
		Let $\omega\subset \mathfrak{m}_{\gamma_0}(F)$ and $\omega'\subset \mathfrak{h}^*(F)$ (where $\mathfrak{m}_{\gamma_0}$, $\mathfrak{h}^*$ denote the Lie algebras of $M_{\gamma_0}$ and $H^*$ respectively) be $M_{\gamma_0}$-invariant and $H^*$-invariant open-closed neighborhood of $0$ that are small enough that the exponential maps are well-defined on them and induce $M_{\gamma_0}$-equivariant and $H^*$-equivariant open embeddings
		$$\exp: \omega\to M_{\gamma_0},\;\;\; \exp: \omega'\to H^*$$
		respectively. We assume moreover, as we may, that 
		\begin{itemize}
			\item $\omega$ and $\omega'$ are invariant by multiplication by $\mathcal{O}_F$;
			
			\item for $X\in \omega'$ (resp. $Y\in \omega$), $\exp(X)\in H^*_{\sr}$ (resp. $\gamma_0\exp(Y)\in \tM_{\sr}$) if and only if $X$ is semisimple regular (resp. $Y$ is semisimple regular);
			
			\item Denoting by $\omega_{\sr}\subset \omega$, $\omega'_{\sr}\subset \omega'$ the open subsets of semisimple regular elements, for every $X\in \omega'$, $Y\in \omega$, we have the identity of Weyl discriminants
			\begin{equation}\label{eq Weyl disc}
				\displaystyle D^{H^*}(\exp(X))=D^{H^*}(X) \mbox{ and } D^{\tM}(\gamma_0 \exp(Y))=D^{\tM}(\gamma_0) D^{M_{\gamma_0}}(Y),
			\end{equation}
			where
		$$\displaystyle D^{H^*}(X)=\left\lvert \det(1-\ad(X)\mid \mathfrak{h}^*/\mathfrak{h}^*_X)\right\rvert,$$
		$$\displaystyle D^{M_{\gamma_0}}(Y)=\left\lvert \det(1-\ad(Y)\mid \mathfrak{m}_{\gamma_0}/\mathfrak{m}_{\gamma_0,Y})\right\rvert.$$
		\end{itemize}
		
		The group $H^*$ can be naturally extended to an endoscopic datum for $M_{\gamma_0}$ (such datum can actually be deduced from the endoscopic datum presenting $H^*$ as an endoscopic group of $\tM$). Moreover, the theory of endoscopy extends to Lie algebras \cite[\S 2]{WaldLFimptrans}. In the present case, this means that there is a natural correspondence between (semisimple regular) adjoint orbits in $\mathfrak{m}_{\gamma_0}(F)$ and $\mathfrak{h}^*(F)$ as well as a Lie algebra analog of transfer factors
		$$\displaystyle \Delta^{\Lie}: \mathfrak{h}^*_{\rs}(F)\times \mathfrak{m}_{\gamma_0,\rs}(F)\to \mathbb{C},$$
		which is nonzero only on pairs of corresponding elements. As for the previous transfer factors, we remove from the original definition the usual ratio of Weyl discriminants $\Delta_{IV}$, which implies that $\Delta^{\Lie}$ satisfies the following homogeneity property: for every $\lambda\in F^\times$ and $(X,Y)\in \mathfrak{h}^*(F)\times \mathfrak{m}_{\gamma_0}(F)$,
		\begin{equation}\label{homog transf factors}
			\Delta^{\Lie}(\lambda^2 X,\lambda^2 Y)=\Delta^{\Lie}(X,Y).
		\end{equation}
There is also a compatibility with transfer factors on the groups. Namely, we can choose $\omega$ and $\omega'$ such that the following conditions are satisfied \cite[lemme 3.8, théorème 3.9]{Waldendtordue}:
		\begin{itemize}
			\item for every $X\in \omega'_{\sr}$, the elements of $\tM$ corresponding to $\exp(X)\in H^*$ are exactly the $M$-conjugate of elements of the form $\gamma_0\exp(Y)$ where $Y\in \omega_{\sr}$ corresponds to $X$ and conversely, for every $Y\in \omega_{\sr}$ the elements of $H^*$ corresponding to $\gamma_0 \exp(Y)$ are exactly the conjugacy classes of elements the form $\exp(X)$ where $X\in \omega'_{\sr}$ corresponds to $Y$;
			
			\item for every $(X,Y)\in \omega_{\sr}\times \omega'_{\sr}$ corresponding to each other
			$$\Delta(\exp(X),\gamma_0\exp(Y))=\Delta^{\Lie}(X,Y).$$
		\end{itemize}
		
		Set
		$$\displaystyle \Omega':=\exp(\omega'),$$
		$$\displaystyle \Omega:=(\gamma_0 \exp(\omega))^M:=\left\{m\gamma_0 \exp(X)m^{-1},\; m\in M, X\in \omega\right\}.$$
		These are open-closed subsets of $H^*$ and $\tM$ respectively. Moreover, every function $f^{\tM}\in C_c^\infty(\Omega)$ admits a transfer $f^{H^*}\in C_c^\infty(\Omega')$ characterized by the identities
		\begin{equation}\label{transfer id lie alg}
			\displaystyle \SI_{\exp(X)}(f^{H^*})=\sum_{Y} \Delta^{\Lie}(X,Y) \rI_{\gamma_0 \exp(Y)}(f^{\tM}), \mbox{ for every } X\in \omega'_{\sr},
		\end{equation}
		where the sum runs over $M_{\gamma_0}$-conjugacy classes of elements $Y\in \omega_{\sr}$ corresponding to $X$.
		
		For every $f^{\tM}\in C_c^\infty(\Omega)$ and $t\in \mathcal{O}_F\setminus \{ 0\}$, define a new function $f^{\tM}_t\in C_c^\infty(\Omega)$ by
		$$\displaystyle f^{\tM}_t(m^{-1}\gamma_0 \exp(Y)m)=\left\{\begin{array}{ll}
			f^{\tM}(m^{-1}\gamma_0 \exp(t^{-1}X)m) & \mbox{ if } t^{-1}X\in \omega, \\
			0 & \mbox{ if } t^{-1}X\notin \omega,
		\end{array}
		\right.$$
		for every $(m,Y)\in M\times \omega$. Then, it follows from \eqref{eq Weyl disc} that for every $Y\in \omega_{\rs}$ we have
		\begin{equation}\label{eq1 orb int lie alg}
			\displaystyle \rI_{\gamma_0\exp(Y)}(f^{\tM}_t)=\left\{\begin{array}{ll}
				\lvert t\rvert^{\delta(M_{\gamma_0})/2} \rI_{\gamma_0\exp(t^{-1}Y)}(f^{\tM}) & \mbox{ if } t^{-1}Y\in \omega, \\
				0 & \mbox{ if } t^{-1}Y\notin\omega,
			\end{array} \right.
		\end{equation}
		where $\delta(M_{\gamma_0})=\dim(M_{\gamma_0})-\dim(T)$ for any maximal torus $T\subset M_{\gamma_0}$, that is the exponent such that $D^{M_{\gamma_0}}(tY)=\lvert t\rvert^{\delta(M_{\gamma_0})} D^{M_{\gamma_0}}(Y)$ for every $(t,Y)\in F^\times \times \mathfrak{m}_{\gamma_0,\rs}(F)$. 
		
		We define similarly a function $f^{H^*}_t\in C_c^\infty(\Omega')$ for every $f^{H^*}\in C_c^\infty(\Omega')$ and $t\in \mathcal{O}_F\setminus \{ 0\}$. It satisfies
		\begin{equation}\label{eq2 orb int lie alg}
			\displaystyle \SI_{\exp(X)}(f^{H^*}_t)=\left\{\begin{array}{ll}
				\lvert t\rvert^{\delta(H^*)/2} \SI_{\exp(t^{-1}X)}(f^{H^*}) & \mbox{ if } t^{-1}X\in \omega', \\
				0 & \mbox{ if } t^{-1}X\notin \omega',
			\end{array}
			\right.
		\end{equation}
		for every $X\in \omega'_{\rs}$, where this time $\delta(H^*)=\dim(H^*)-\dim(T^*)$ for any maximal torus $T^*\subset H^*$.
		
		Let $f^{\tM}\in C_c^\infty(\Omega)$ and $f^{H^*}\in C_c^\infty(\Omega')$ be a transfer of it. It follows from \eqref{homog transf factors} , \eqref{eq1 orb int lie alg}, \eqref{eq2 orb int lie alg} as well as the identities \eqref{transfer id lie alg} characterizing the transfer, that for every $t\in \mathcal{O}_F\setminus \{ 0\}$, $\lvert t\rvert^{\delta(M_{\gamma_0})-\delta(H^*)}f^{H^*}_{t^2}$ is a transfer of $f^{\tM}_{t^2}$. In particular, we have
		$$\displaystyle \rJ(f^{\tM}_{t^2})=\lvert t\rvert^{\delta(M_{\gamma_0})-\delta(H^*)}f^{H^*}_{t^2}(1)=\lvert t\rvert^{\delta(M_{\gamma_0})-\delta(H^*)}f^{H^*}(1)=\lvert t\rvert^{\delta(M_{\gamma_0})-\delta(H^*)}\rJ(f^{\tM}).$$
		We readily compute that
		$$\displaystyle \delta(M_{\gamma_0})-\delta(H^*)=\delta(\Sp_{2n})-\delta(\SO_{2n})=2n.$$
		On the other hand, the orbital integrals $\rI_{\gamma_0u}$, $u\in \mathrm{Unip}(M_{\gamma_0})$ have the following homogeneity property \cite[Lemma 3.2]{HCadminv}:
		$$\displaystyle \rI_{\gamma_0 u}(f^{\tM}_{t^2})=\lvert t\rvert^{\delta(u)}I_{\gamma_0 u}(f^{\tM}),\;\ t\in \mathcal{O}_F\setminus \{ 0\},$$
		where $\delta(u)$ denotes the dimension of the $M_{\gamma_0}$-conjugacy class of $u$. Comparing the two previous equations with \eqref{exp unip orb}, we see that for every $u\in \mathrm{Unip}(M_{\gamma_0})$, $c_u=0$ unless $\delta(u)=2n$. However, $2n$ is exactly the dimension of the minimal unipotent orbit of $M_{\gamma_0}=\Sp_{2n}$, which is the only geometric unipotent orbit of that dimension. Thus, only the orbital integrals at $\gamma_0 u$, for $u$ in the minimal unipotent orbit, can contribute to $\rJ$. It is immediate to check that the $M$-conjugacy class so obtained are exactly the $M$-conjugacy classes of the elements $\gamma_t$ for $t\in F^\times/F^{\times,2}$. It follows that
		\begin{equation}\label{roughdecomp I}
			\displaystyle \rJ(f^{\tM})=\sum_{t\in F^\times/F^{\times,2}} c_t I_{\gamma_t}(f^{\tM}),\;\; f^{\tM}\in C_c^\infty(\tM),
		\end{equation}
		for certain constants $c_t\in \bC$. Using now that $f^{\tM}$ and $\chi(a)L(a)f^{\tM}$ have the same transfers for every $a\in A=F^\times$, we see that $c_{ta}=\chi(a)c_t$ for every $a,t\in F^\times$ hence that $\rJ$ is indeed proportional to $\rI_{\chi}$.

		The case where $H$ is symplectic can be dealt with in a similar way. The main difference, is that an element $\gamma\in \tM$ with $\chi_{{}^t\gamma^{-1}\gamma}=(T+1)^{2n}(T-1)$ has a semisimple part that is directly $M$-conjugate to $\gamma_t$ for some $t\in F^{\times}$. Then, using a similar homogeneity argument (this time the difference $\delta(M_{\gamma_t})-\delta(H^*)$ is $0$) we can show that $\rJ$ admits a decomposition like \eqref{roughdecomp I} and we conclude in exactly the same way using the fact that $f^{\tM}$ and $\chi(a)L(a)f^{\tM}$ have the same transfers for every $a\in A$.
	\end{proof}
	
	\subsection{Local Langlands for $H$ and twisted character identities}
	
	Let ${}^L H$ be the $L$-group of $H$. More precisely, we take the following finite Galois form of the $L$-group 
	$$\displaystyle {}^L H=\mathrm{SO}_d(\bC)\rtimes \Gal(E/F)$$
	where we recall that the dual group of $H$ is $\widehat{H}=\SO_d(\bC)$ and $H$ splits over the extension $E/F$. In the case where $E\neq F$, that is when $H$ is a special orthogonal group and $\disc(q)\neq 1$, the nontrivial element of $\Gal(E/F)$ acts on $\mathrm{SO}_d(\bC)$ by conjugation by an element of $\mathrm{O}_d(\bC)\setminus \mathrm{SO}_d(\bC)$. In particular, we always have a natural embedding ${}^L H\subset \mathrm{O}_d(\bC)$.
	
	As usual, a {\em $L$-parameter} for $H$ is a continuous group homomorphism
	$$\phi: W'_F:=W_F\times \mathrm{SL}_2(\bC)\to {}^L H,$$
	that commutes with the natural projections to $\Gal(E/F)$, sends $W_F$ to semisimple elements and has a restriction to $\mathrm{SL}_2(\bC)$ that is algebraic. For every $L$-parameter $\phi: W'_F\to {}^L H$, we let $Z_{\mathrm{SO}_d(\bC)}(\phi)$ and $Z_{\mathrm{O}_d(\bC)}(\phi)$ be the centralizers of the image of $\phi$ in $\mathrm{SO}_d(\bC)$ and $\rO_d(\bC)$ respectively and set
	$$\displaystyle S_\phi:= \pi_0(Z_{\mathrm{SO}_d(\bC)}(\phi)),\;\;\; S_\phi^+=\pi_0(Z_{\mathrm{O}_d(\bC)}(\phi))$$
	for their respective component groups.
	
	The usual equivalence relation on $L$-parameters is that of conjugation by the dual group $\widehat{H}=\SO_d(\bC)$ and we denote by $\Phi(H)$ the set of $\SO_d(\bC)$-conjugacy classes of $L$-parameters for $H$. We also denote by $\Phi^+(H)$ the set of $\mathrm{O}_d(\bC)$-conjugacy classes of $L$-parameters for $H$. Note that we have a canonical surjection $\Phi(H)\twoheadrightarrow \Phi^+(H)$ and that $\Phi^+(H)=\Phi(H)$ when $H$ is symplectic (since then $d$ is odd and $\mathrm{O}_d(\bC)=\mathrm{SO}_d(\bC)\times \{ \pm \rI_d\}$). However, this quotient is strict in the orthogonal case. More precisely, the fiber of the map $\Phi(H)\twoheadrightarrow \Phi^+(H)$ above the equivalence class of a $L$-parameter $\phi$ is of cardinality $\frac{2\lvert S_\phi\rvert}{\lvert S_\phi^+\rvert}$, this ratio being equal to $1$ or $2$.

	A $L$-parameter $\phi: W'_F\to {}^L H$ is {\em bounded} if $\phi(W_F)$ is a bounded subgroup. We denote by $\Phi_{\bdd}(H)$ and $\Phi_{\bdd}^+(H)$ the subsets of equivalence classes of bounded $L$-parameters.
	
	The local Langlands correspondence of \cite{artbook}, \cite{MoeLLC} \cite{MoeReLLC} associates to every $\phi\in \Phi_{\bdd}^+(H)$ finite subsets\footnote{We have the equality of $L$-groups ${}^L H^*={}^L H$ so that $L$-parameters for $H$ are the same thing as $L$-parameters for $H^*$.}
	$$\displaystyle \Pi^{H,+}_\phi\subset \Temp(H) \mbox{ and } \Pi^{H^*,+}_\phi\subset \Temp(H^*)$$
	that are $\theta$- and $\theta^*$-invariant respectively and that induce a disjoint decomposition of the tempered duals of $H$ and $H^*$:
	\begin{equation}\label{disjoint Lpackets}
		\displaystyle \Temp(H)=\bigsqcup_{\phi\in \Phi_{\bdd}^+(H)}  \Pi^{H,+}_\phi,\;\;\; \Temp(H^*)=\bigsqcup_{\phi\in \Phi_{\bdd}^+(H)}  \Pi^{H^*,+}_\phi.
	\end{equation}
	For each $\sigma\in \Temp(H)$ (resp. $\sigma^*\in \Temp(H^*)$), we shall denote by $\phi_\sigma\in \Phi^+_{\bdd}(H)$ (resp. $\phi_{\sigma^*}\in \Phi^+_{\bdd}(H)$) its $L$-parameter, that is characterized by $\sigma\in \Pi^{H,+}_{\phi_\sigma}$ (resp. $\sigma^*\in \Pi^{H^*,+}_{\phi_{\sigma^*}}$). Moreover, for every $\sigma\in \Temp(H)$, we set
	$$\displaystyle S_\sigma:=S_{\phi_\sigma}.$$
	
	By the natural embedding ${}^L H\subset \mathrm{GL}_d(\bC)$, a $L$-parameter $\phi\in \Phi_{\bdd}^+(H)$ can also be seen as a semisimple $d$-dimensional representation of $W'_F$ and therefore corresponds, by the local Langlands correspondence for $\GL_d$, to a tempered irreducible representation $\pi_{\phi}^{\GL}$ of $M=\GL_d(F)$. This identifies $\Phi^+_{\bdd}(H)$ with the set of equivalence classes of continuous, semisimple bounded orthogonal representations of the Weil-Deligne group $W'_F$ of determinant $\chi$ (that we identify with a character of $W_F$ via local class field theory). Thus, the map $\phi\mapsto \pi_\phi^{\GL}$ induces a bijection
	$$\displaystyle \Phi^+_{\bdd}(H)\simeq \Temp^{\orth}_\chi(M).$$
	For $\pi\in \Temp_\chi^{\orth}(M)$, we set $S_\pi=S_\phi$ and $S^+_\pi=S_\phi^+$ where $\phi\in \Phi^+_{\bdd}(H)$ is such that $\pi\simeq \pi_\phi^{\GL}$. Note that this coincides with the definition of $S_\pi^+$ given in \S \ref{S def Spi+}.
	
	By the decomposition \eqref{disjoint Lpackets}, we can also view the assignment $\phi\mapsto \pi^{\GL}_\phi$ as ``functorial lifts''
	$$\displaystyle \Temp(H)\to \Temp^{\orth}_\chi(M) \mbox{ and } \Temp(H^*)\to \Temp^{\orth}_\chi(M)$$
	that we shall denote by $\sigma\mapsto \sigma^{\GL}$ and $\sigma^*\mapsto (\sigma^*)^{\GL}$ respectively. More precisely, for every $\sigma\in \Temp(H)$ (resp. every $\sigma^*\in \Temp(H^*)$) we have $\sigma^{\GL}=\pi^{\GL}_{\phi}$ (resp. $(\sigma^*)^{\GL}=\pi^{\GL}_{\phi}$) where $\phi =\phi_\sigma$ (resp. $\phi=\phi_{\sigma^*}$) is the $L$-parameter of $\sigma$ (resp. of $\sigma^*$). By the compatibility of the local Langlands correspondence with parabolic induction, the irreducible summands of the induction of a discrete series of $H$ (resp. $H^*$) belong to the same $L$-packet $\Pi_\phi^H$ (resp. $\Pi^{H^*}_{\phi}$). Hence, the previous maps descend to
	$$\displaystyle \Temp_{\ind}(H)\to \Temp_\chi^{\orth}(M) \mbox{ and } \Temp_{\ind}(H^*)\to \Temp_\chi^{\orth}(M).$$
	
	Let $\Ad_H$ be the adjoint representation of ${}^L H$ on its Lie algebra. Then, for every $\sigma\in \Temp(H)$, we let
	$$\displaystyle \gamma(s,\sigma,\Ad_H,\psi)=\gamma(s,\Ad_H\circ \phi_\sigma,\psi)$$
	be the corresponding Artin $\gamma$-factor. We also set
	$$\displaystyle  \gamma^*(0,\sigma,\Ad_H,\psi)=\left(\zeta_F(s)^{n_\sigma} \gamma(s,\sigma,\Ad_H,\psi) \right)_{s=0}$$
	where $n_\sigma\geq 0$ is the order of the zero of $\gamma(s,\sigma,\Ad_H,\psi)$ at $s=0$. Because $\Ad_H$ is also the restriction of the exterior square of the standard representation by the embedding ${}^L H\subset \GL_d(\bC)$, we have
	\begin{equation}\label{eq gamma factors}
	\displaystyle \gamma(s,\sigma,\Ad_H,\psi)=\gamma(s,\sigma^{\GL},\bigwedge^2,\psi),\;\; \gamma^*(0,\sigma,\Ad_H,\psi)=\gamma^*(0,\sigma^{\GL},\bigwedge^2,\psi).
	\end{equation}
	
	Recall that every representation $\pi\in \Temp^{\orth}(M)$ extends to a representation $\tpi$ of the twisted space $\tM$ and that we have fixed such an extension in Section \ref{Sect ortho reps}, requiring in addition $\tpi$ to be unitary.
	 
 The following character relations characterize completely the $L$-packets $\Pi_\phi^{H,+}$ and $\Pi_{\phi}^{H^*,+}$ in terms of the local Langlands correspondence for $M$ \cite{artbook} \cite{MoeReLLC}.
	
	\begin{theo}[Arthur, M\oe{}glin-Renard]\label{theo twisted character relations}
		There exists a constant $c>0$ such that for every $\pi\in \Temp_\chi^{\orth}(M)$,
		\begin{enumerate}[(i)]
			\item we have
			\begin{equation}
				\displaystyle \sum_{\sigma\in \Temp(H), \; \sigma^{\GL}=\pi} \Theta_\sigma(f^H)=\sum_{\sigma^*\in \Temp(H^*),\; (\sigma^*)^{\GL}=\pi} \Theta_{\sigma^*}(f^{H^*})
			\end{equation}
			for every pair of matching functions $(f^H,f^{H^*})\in C_c^\infty(H)\times C_c^\infty(H^*)$.
			\item  There exists a norm $1$ scalar $u(\pi)\in \mathbb{S}^1$ such that we have
			\begin{equation}
				\displaystyle \Theta_{\widetilde{\pi}}(f^{\tM})=cu(\pi)\frac{\lvert S_\pi^+\rvert}{2\lvert S_\pi\rvert}\sum_{\sigma^*\in \Temp(H^*),\; \sigma^{\GL}=\pi} \Theta_{\sigma^*}(f^{H^*})
			\end{equation}
			for every pair of matching test functions $(f^{\tM},f^{H^*})\in C_c^\infty(\tM)\times C_c^\infty(H^*)$.
		\end{enumerate}

	\end{theo}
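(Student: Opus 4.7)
The plan is to deduce both identities from Arthur's endoscopic classification of tempered representations of quasi-split classical groups \cite{artbook}, extended to pure inner forms and even special orthogonal groups by M\oe{}glin--Renard \cite{MoeReLLC}. Both statements are instances of twisted endoscopic character relations obtained by comparing the stable trace formula for $H^*$ with the $\theta^*$-twisted trace formula for $M=\GL_d$. I would treat $(ii)$ first, since it is in fact the defining property of the packets $\Pi^{H^*,+}_\phi$, and then derive $(i)$ from it together with the transfer compatibility between $H$ and its quasi-split inner form $H^*$.

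For $(ii)$, the approach is to globalize the local datum $(\phi,\tpi)$ to an auxiliary number field $\dot F$ with $\dot F_{v_0}=F$: choose global reductive groups $\dot H^*$ and $\dot M$ specializing to $H^*$ and $M$ at $v_0$, together with a cuspidal self-dual automorphic representation $\dot\pi$ of $\dot M$ satisfying $\dot\pi_{v_0}\simeq \pi$ and whose local components elsewhere are rigidly controlled (e.g.\ Steinberg or simple supercuspidal at an auxiliary place, unramified at all but finitely many). One then matches the discrete spectral sides of the stable trace formula for $\dot H^*$ and of the $\theta^*$-twisted trace formula for $\dot M$ specialised to this cuspidal datum; at all places other than $v_0$ the local transfer identities are available by induction on rank and by the character theory of $\GL_d$, so that after cancellation there remains at $v_0$ precisely the identity $(ii)$ up to a global constant. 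The constant $c>0$ absorbs the normalization of Haar measures and transfer factors, while $u(\pi)\in \mathbb{S}^1$ is a Whittaker-type scalar depending on our chosen unitary extension $\tpi$. The combinatorial factor $\lvert S_\pi^+\rvert/(2\lvert S_\pi\rvert)$ records the size of the $\theta^*$-orbit of the packet: it equals $1$ when $\phi$ is not fixed by the outer involution of $\mathrm{SO}_d(\bC)$ and $1/2$ otherwise.

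Part $(i)$ then follows by a purely local argument. Once $(ii)$ is known, its right hand side, viewed as a distribution in $f^{H^*}$, is the endoscopic transfer of the twisted orbital distribution $f^{\tM}\mapsto \Theta_{\tpi}(f^{\tM})$ and is therefore stable on $H^*$; summing over the parameters $\phi$ mapping to a given $\pi\in \Temp_\chi^{\orth}(M)$ yields the stable tempered character $\sum_{\sigma^*\in \Pi^{H^*,+}_\phi}\Theta_{\sigma^*}$. By the compatibility of the local Langlands correspondence for the pure inner form $H$ of \cite{MoeReLLC} with endoscopic transfer, this stable character equals $\sum_{\sigma\in \Pi^{H,+}_\phi}\Theta_\sigma$ on $H$ for matching functions, which is exactly $(i)$. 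The main obstacle in this program lies in the globalization underlying $(ii)$: one must simultaneously control all local components of $\dot\pi$ and invoke the inductive structure of the endoscopic classification in order to isolate the local contribution at $v_0$ from the trace-formula comparison. In the even orthogonal case, the additional necessity of working modulo the outer involution $\theta^*$ is what produces the factor $\lvert S_\pi^+\rvert/(2\lvert S_\pi\rvert)$, whose precise verification requires the detailed component-group analysis carried out in \cite{MoeReLLC}.
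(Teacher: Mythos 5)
This theorem is not proved in the paper at all: it is imported as a black box from Arthur \cite{artbook} and M\oe{}glin--Renard \cite{MoeReLLC} (the surrounding text explicitly presents it as the characterizing property of the packets supplied by those references), so there is no internal argument to compare yours against. Your sketch is a fair thumbnail of how those references proceed --- part $(ii)$ is the defining twisted endoscopic identity for the packets of the quasi-split form, established by globalizing the parameter and comparing the stabilized $\theta^*$-twisted trace formula for $\GL_d$ with the stable trace formulas of its elliptic endoscopic data, and part $(i)$ is the defining stable-transfer compatibility between $H$ and $H^*$ used to construct the packets of the inner form. But be aware that this is an outline of a citation, not a proof: it silently assumes the stabilization of the twisted trace formula (M\oe{}glin--Waldspurger), the local intertwining relations, the seed cases, and the whole inductive structure of the classification, each of which is a major theorem in its own right.

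There is also one concrete error in your description of the combinatorial factor. The fiber of $\Phi_{\bdd}(H)\twoheadrightarrow \Phi^+_{\bdd}(H)$ over $\phi$ consists of the $\SO_d(\bC)$-classes $\{[\phi],[\phi^{\theta}]\}$, so it is a singleton exactly when $\phi$ \emph{is} fixed, up to $\widehat H$-conjugacy, by the outer involution (equivalently $Z_{\rO_d(\bC)}(\phi)\not\subset \SO_d(\bC)$, so that $\lvert S_\phi^+\rvert=2\lvert S_\phi\rvert$). Hence $\lvert S_\pi^+\rvert/(2\lvert S_\pi\rvert)=1$ precisely in the $\theta$-fixed case and $1/2$ when $\phi$ and $\phi^\theta$ are not $\SO_d(\bC)$-conjugate; you state the dichotomy the other way around. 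Since this factor is exactly what the paper's subsequent Remark pins down, and it enters the final comparison with the Plancherel density, getting its direction right matters.
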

	
	\begin{rem}
		The constants $c$ and $u(\pi)$ are here to compensate the freedom in the choice of transfer factors and the extension of $\pi$ to the twisted representation $\widetilde{\pi}$ respectively. Provided these are normalised in a coherent way, using some Whittaker datum, they can actually both be taken to be equal to $1$.
	\end{rem}
	
	\begin{rem}
		Let $\phi\in \Phi^+_{\bdd}(H)$ be the parameter of $\pi$. Then, as we already remarked, the ratio $\frac{\lvert S_\pi^+\rvert}{2\lvert S_\pi\rvert}$ is equal to $1$ when the fiber above $\phi$ in $\Phi_{\bdd}(H)$ is a singleton and $\frac{1}{2}$ otherwise, that is when this fiber is of cardinality $2$.
	\end{rem}
	
	\subsection{Comparison of spectral measures}

	\begin{prop}\label{prop comparison spectral measures}
		The lifting $\Temp_{\ind}(H)\to \Temp_\chi^{\orth}(M)$, $\sigma\mapsto \sigma^{\GL}$, is locally measure-preserving when we equip $\Temp_{\ind}(H)$ with the spectral measure $d\sigma$ and $\Temp_\chi^{\orth}(M)$ with the measure $d^{\orth} \pi$.
	\end{prop}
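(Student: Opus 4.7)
My proof plan is to work component by component, matching the patch description of each measure and verifying that the lifting is a local isomorphism of the parameter tori that preserves Haar measures and Weyl stabilizers.

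\textbf{Step 1 (Setup of components).} Fix a connected component of $\Temp_{\ind}(H)$ through a point $\rI_L^H(\sigma)$, where $L\simeq \GL_{k_1}\times\cdots\times \GL_{k_r}\times H'$ is a Levi subgroup (with $H'$ a symplectic or even orthogonal group of smaller rank, possibly trivial) and $\sigma=\tau_1\boxtimes\cdots\boxtimes\tau_r\boxtimes\sigma_0$ is a discrete series with $\tau_i\in\Pi_2(\GL_{k_i})$ and $\sigma_0\in\Pi_2(H')$. By the discussion of \S\ref{S spectral measure}, the spectral measure on this component pushes down from $i\cA_L^*$ with a factor $|W(H,\sigma)|^{-1}$.

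\textbf{Step 2 (Computing the lift).} By Arthur's description recalled in this section, the $\GL$-lift of $\sigma_0$ has the form $\sigma_0^{\GL}=\rI_{L_0}^{M'}\bigl(\boxtimes_{j\in J}\tilde\tau_j\bigr)$ with the $\tilde\tau_j\in\Pi_2^{\orth}(\GL_{m_j})$ pairwise distinct. Because local Langlands is compatible with direct sums of parameters and parabolic induction on the $\GL$ side, I would compute
\[
\bigl(\rI_L^H(\sigma_\mu)\bigr)^{\GL}=\rI_{L'}^M\bigl(\tau_{1,\mu_1}\boxtimes\tau_{1,-\mu_1}^\vee\boxtimes\cdots\boxtimes\tau_{r,\mu_r}\boxtimes\tau_{r,-\mu_r}^\vee\boxtimes\boxtimes_j\tilde\tau_j\bigr),
\]
where $L'=\GL_{k_1}^{\oplus 2}\times\cdots\times\GL_{k_r}^{\oplus 2}\times\prod_j\GL_{m_j}$ and $\mu=(\mu_i)\in i\cA_L^*$. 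This exhibits the image as lying in the component of $\Temp_\chi^{\orth}(M)$ parameterized by the triple $(L',w,\tilde\sigma)$ where $w$ is the involution swapping the paired $\GL_{k_i}$-factors and $\tilde\sigma=\tau_1\boxtimes\tau_1^\vee\boxtimes\cdots\boxtimes\boxtimes_j\tilde\tau_j\in\Pi_2^{\orth,w}(L')$; the formula above exhibits the corresponding point in $i\cA_{L',w}^*$ as $((\mu_i,-\mu_i))_i$.

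\textbf{Step 3 (Matching the tori).} The map $\mu\mapsto((\mu_i,-\mu_i))_i$ is a linear isomorphism $i\cA_L^*\stackrel{\sim}{\to} i\cA_{L',w}^*$. I would check it respects the defining lattices. On the source, $X^*(L)\simeq\bZ^r$ is generated by the determinants $\det_i$ of the $\GL_{k_i}$-factors (with $X^*(H')=0$ since $H'$ is semisimple). On the target, $X^*(L')_w=X^*(L')\cap\cA_{L',w}^*$ is spanned by $\det_{i,1}-\det_{i,2}$, again a rank $r$ lattice, which is exactly the image of $\det_i$ under the duality of the above linear isomorphism. Hence the isomorphism sends $\tfrac{2i\pi}{\log q}X^*(L)$ onto $\tfrac{2i\pi}{\log q}X^*(L')_w$ and is therefore measure-preserving for the normalizations of \S\ref{S measures} and \S\ref{S orth spectral measure}.

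\textbf{Step 4 (Weyl stabilizers, the main step).} It remains to show $|W(H,\sigma)|=|W(M,\tilde\sigma)_w|$. The group $W(H,L)$ is the standard wreath-type Weyl group of a classical Levi: permutations of the $\GL_{k_i}$-factors of equal size, composed with involutions sending a factor $\tau_i$ to $\tau_i^\vee$, and $W(H,\sigma)$ is the subgroup of such operations preserving the isomorphism class of $\sigma$. On the $M$-side, $W(M,L')$ is a product of symmetric groups on $\GL$-factors of equal size, and $W(M,\tilde\sigma)_w$ consists of those elements that commute with the swap involution $w$ and fix $\tilde\sigma$. A permutation of the paired blocks $(\tau_i,\tau_i^\vee)$ commuting with $w$ either preserves the block (i.e., swaps $\tau_i$ with $\tau_i^\vee$, which demands $\tau_i\simeq\tau_i^\vee$) or swaps an entire block with another (which demands matching isomorphism classes); independently, the $\tilde\tau_j$-factors can only be permuted trivially since they are pairwise distinct. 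This matches one-to-one with the two types of generators of $W(H,\sigma)$ (sign changes and block permutations), giving the equality of cardinalities.

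\textbf{Conclusion.} Combining the measure-preserving linear isomorphism of Step 3 with the equality of Weyl stabilizers of Step 4 and the patch formula of \S\ref{S spectral measure}/\S\ref{S orth spectral measure} identifies, locally near $\rI_L^H(\sigma)$, the pushforward of $d\sigma$ with $d^{\orth}\pi$. I expect the combinatorial comparison of Step 4 to be the main obstacle: it is essentially bookkeeping about the $R$-group structure for classical groups, and some care is needed to handle the outer-automorphism subtlety for $H=\SO_{2n}$, but no new ideas beyond Arthur's local Langlands and the shape of $W(H,L)$ are required.
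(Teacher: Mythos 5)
Your overall strategy --- reducing to a component-by-component comparison of parameter tori, lattices and Weyl stabilizers --- is exactly the ``inspection'' that the paper's one-line proof alludes to, and Steps 1--3 are correct. But the central claim of Step 4, namely $\lvert W(H,\sigma)\rvert=\lvert W(M,\tilde\sigma)_w\rvert$, is false for even special orthogonal groups; this is precisely the outer-automorphism subtlety you flag but do not resolve. Concretely, take $H=\SO_6$ split, $L=\GL_3$ the Siegel Levi and $\sigma=\tau\in \Pi_2(\GL_3)$ self-dual (e.g.\ Steinberg). The element realizing $g\mapsto {}^t g^{-1}$ on $L$ lies in $\rO_6$ with determinant $(-1)^3=-1$, hence not in $\SO_6$, so $W(H,L)=\Norm_H(L)/L$ is trivial and $\lvert W(H,\sigma)\rvert=1$; whereas $L'=\GL_3\times\GL_3$, $\tilde\sigma=\tau\boxtimes\tau^\vee\simeq\tau\boxtimes\tau$, $w$ is the swap, and $\lvert W(M,\tilde\sigma)_w\rvert=2$. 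In general a ``sign change'' on a block $\GL_{k_i}$ is realized only in $\rO(V)\setminus \SO(V)$ when $k_i$ is odd, so $W(H,\sigma)$ can be an index-two subgroup of the group your dictionary produces. Correspondingly, the lifting is not locally injective on such a component: $[L,\tau_\mu]$ and $[L,\tau_{-\mu}]$ are distinct points of $\Temp_{\ind}(H)$ ($\rO_6$- but not $\SO_6$-conjugate) with the same lift. So an argument deducing measure preservation from local injectivity plus equality of stabilizers cannot go through as written.

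There are two ways to repair this. (a) Prove instead $\lvert W^+(H,\sigma)\rvert=\lvert W(M,\tilde\sigma)_w\rvert$ with $W^+(H,L)=\Norm_{\rO(V)}(L)/L$, and show separately that the lifting restricted to the component of $[L,\sigma]$ is generically $\lvert W^+(H,\sigma)\rvert/\lvert W(H,\sigma)\rvert$-to-one (this ratio is $1$ or $2$), so the two discrepancies cancel in the pushforward. (b) More efficiently, avoid comparing Weyl stabilizers altogether: both $d\sigma$ and $d^{\orth}\pi$ are \emph{defined} by the condition that the parameterizing maps $p:i\cA_L^*\to \Temp_{\ind}(H)$ and $q:i\cA_{L',w}^*\to \Temp_\chi^{\orth}(M)$ pull them back to the normalized Haar measures. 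Your Step 2 gives $\ell\circ p=q\circ\iota$ for the lifting $\ell$ and your Step 3 gives that $\iota$ is measure-preserving, whence $p^*\ell^*d^{\orth}\pi=\iota^*q^*d^{\orth}\pi=\iota^*(d\lambda)=d\mu=p^*d\sigma$ off a null set, and therefore $\ell^*d^{\orth}\pi=d\sigma$ since $p$ is generically a surjective local homeomorphism onto the component. This pullback identity (equivalently, local measure preservation at almost every point, with the fibers of $\ell$ counted) is the form of the proposition actually used in the proof of the main theorem, and in this formulation the Weyl factors never need to be matched by hand.
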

	
	\begin{proof}
		This just follows from the compatibility of the lifting with parabolic induction and inspection.
	\end{proof}

	\subsection{Proof of the main theorem}
	
	Let us recall the statement.
	
	\begin{theo}
		For almost all $\sigma\in \Temp(H)$, we have the identity
		$$\displaystyle \mu_H(\sigma)=\frac{\lvert \gamma^*(0,\sigma,\Ad_H,\psi)\rvert}{\lvert S_\sigma\rvert}.$$
	\end{theo}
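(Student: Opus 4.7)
The plan is to combine the spectral expansion of the twisted orbital integral $I_\chi$ (Theorem \ref{theo spectraldec orbint}), the transfer of the central Dirac at $\epsilon=(-1)^d\in Z(H)$ (Proposition \ref{prop transfer dirac}), and the twisted and stable endoscopic character relations (Theorem \ref{theo twisted character relations}) to produce two Plancherel-type expansions of $f(\epsilon)$ for $f\in C_c^\infty(H)$: one obtained directly from the Plancherel formula on $H$, and the other obtained after transferring to $\tM$, expanding $I_\chi$ spectrally, and transferring back. Equating them and using uniqueness of spectral decompositions will yield the desired identity up to an overall unit constant, to be pinned down in a known special case.

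Let $f\in C_c^\infty(H)$, which we may assume $\theta$-invariant since both sides of the target identity are $\theta$-invariant (the Plancherel density is, and the $L$-parameter $\phi_\sigma$ is only well-defined up to $\theta$). Applying the Plancherel formula to $h\mapsto f(\epsilon h)$ and using that $\epsilon\in Z(H)$ acts by $\omega_\sigma(\epsilon)\in\{\pm 1\}$ yields
$$f(\epsilon)=\int_{\Temp_{\ind}(H)}\omega_\sigma(\epsilon)\,\Theta_\sigma(f)\,\mu_H(\sigma)\,d\sigma.$$
On the other hand, let $f^{H^*}$ be a transfer of $f$, which we may take $\theta^*$-invariant (by averaging, using $\theta$-invariance of $f$). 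Proposition \ref{prop transfer dirac}$(i)$ gives $f(\epsilon)=f^{H^*}(\epsilon)$, and by Theorem \ref{prop image transfer} the $\theta^*$-invariant function $f^{H^*}$ is itself the transfer from $\tM$ of some $f^{\tM}\in C_c^\infty(\tM)$, so Proposition \ref{prop transfer dirac}$(ii)$ yields $f^{H^*}(\epsilon)=c\cdot I_\chi(f^{\tM})$.

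Now substitute the spectral formula from Theorem \ref{theo spectraldec orbint}, then Theorem \ref{theo twisted character relations}$(ii)$ (whose factor $|S_\pi^+|/(2|S_\pi|)$ cancels the $|S_\pi^+|^{-1}$ of Theorem \ref{theo spectraldec orbint}), then the stable relation Theorem \ref{theo twisted character relations}$(i)$ to move from $H^*$ to $H$, and finally convert the integral over $\Temp_\chi^{\orth}(M)$ (with the inner sum over the fiber of $\sigma\mapsto\sigma^{\GL}$) into an integral over $\Temp_{\ind}(H)$ via Proposition \ref{prop comparison spectral measures}, using also \eqref{eq gamma factors} to replace $\gamma^*(0,\pi,\wedge^2,\psi)$ by $\gamma^*(0,\sigma,\Ad_H,\psi)$. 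This produces
$$f(\epsilon)=\kappa\int_{\Temp_{\ind}(H)}\eta(\sigma)\,\Theta_\sigma(f)\,\frac{\gamma^*(0,\sigma,\Ad_H,\psi)}{|S_\sigma|}\,d\sigma,$$
for some absolute constant $\kappa\in\bC^\times$ and a function $\eta\colon\Temp_{\ind}(H)\to\mathbb{S}^1$ arising from the non-canonical normalizations of transfer factors, of the extensions $\tpi$, and of the units $u(\pi)$ from Theorem \ref{theo twisted character relations}. Comparing the two expansions and invoking the injectivity of the Plancherel transform on spectral densities (which forces equality of integrands $d\sigma$-almost everywhere when tested against all $\theta$-invariant $f$) yields, for a.e. $\sigma$, the pointwise identity $\omega_\sigma(\epsilon)\mu_H(\sigma)=\kappa\eta(\sigma)\gamma^*(0,\sigma,\Ad_H,\psi)/|S_\sigma|$. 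Since $\mu_H(\sigma)\geq 0$ and $|\omega_\sigma(\epsilon)|=|\eta(\sigma)|=1$, taking absolute values gives $\mu_H(\sigma)=|\kappa|\cdot|\gamma^*(0,\sigma,\Ad_H,\psi)|/|S_\sigma|$.

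The remaining step, and the only real obstacle, is to show $|\kappa|=1$: the constants $c$, $C$, $u(\pi)$, and the measure on $\Ad(M)\gamma_t$ were each defined only up to units, and these ambiguities propagate through the argument. This is done by specializing to a case where both $\mu_H$ and the right-hand side are known independently, for instance by comparing with the formal degree conjecture for a stable discrete series of $H$ (which is accessible by the original approach of \cite{HII} as recalled in the introduction), or by a direct Macdonald-type computation at an unramified principal series. Fixing $|\kappa|=1$ in this way completes the proof.
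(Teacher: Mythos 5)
Your proposal is correct and follows essentially the same route as the paper: transfer the central Dirac at $\epsilon$ to $I_\chi$ on $\tM$ via Proposition \ref{prop transfer dirac} and Theorem \ref{prop image transfer}, expand spectrally by Theorem \ref{theo spectraldec orbint}, return to $H$ through the character relations of Theorem \ref{theo twisted character relations} together with Proposition \ref{prop comparison spectral measures} and \eqref{eq gamma factors}, compare with the Plancherel formula at $\epsilon$, and remove the unit ambiguity by positivity. The paper pins the remaining constant down exactly as you suggest, by evaluating at the Steinberg representation (a stable discrete series) using \cite[\S 3.3]{HII}.
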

	
	\begin{proof}
		Let $f^H\in C_c^\infty(H)$. Let $f^{H^*}\in C_c^\infty(H^*)$ be a transfer of $f^H$ and let
		$$f_0^{H^*}:=\frac{f^{H^*}+(f^{H^*})^{\theta^*}}{2}$$
		be its projection to the subspace of $\theta^*$-invariant functions. Then, by Theorem \ref{prop image transfer}, $f_0^{H^*}$ is the transfer of a function $f^{\tM}\in C_c^\infty(\tM)$. By Proposition \ref{prop transfer dirac}, and the fact that $\theta^*(\epsilon)=\epsilon$, there exists an absolute constant $C_1\in \mathbb{C}^\times$ such that
		\begin{equation*}
			\displaystyle f^H(\epsilon)=f^{H^*}(\epsilon)=f_0^{H^*}(\epsilon)=C_1 \rI_{\chi}(f^{\tM}).
		\end{equation*}
		By Theorem \ref{theo spectraldec orbint}, this implies that
\begin{equation}\label{eq1 final proof}
			\displaystyle f^H(\epsilon)=C_2\int_{\Temp_\chi^{\orth}(M)} \Theta_{\widetilde{\pi}}(f^{\tM}) u_1(\pi) \frac{\gamma^*(0,\pi,\wedge^2,\psi)}{\lvert S_\pi^{\orth}\rvert} d\pi,
\end{equation}
	where $C_2\in \bC^\times$ is another absolute constant and $u_1(\pi)$ are complex numbers of absolute value $1$. By the character relations of Theorem \ref{theo twisted character relations}, and the fact that the $L$-packets $\Pi^{H^*,+}_\phi$ are $\theta^*$-invariant, there exist an absolute constant $C_3>0$ as well as some constants $u_2(\pi)$ of absolute value $1$ such that
	\begin{equation*}
	\displaystyle \Theta_{\widetilde{\pi}}(f^{\tM})=C_3u_2(\pi)\sum_{\sigma^*\in \Temp(H^*),\; (\sigma^*)^{\GL}=\pi} \Theta_{\sigma^*}(f^{H^*})=C_3u_2(\pi)\sum_{\sigma\in \Temp(H), \; \sigma^{\GL}=\pi} \Theta_\sigma(f^H)
	\end{equation*} 
for every $\pi\in \Temp^{\orth}_\chi(M)$. Combining these identities with \eqref{eq gamma factors} and Proposition \ref{prop comparison spectral measures}, we see that the identity \eqref{eq1 final proof} can be rewritten as
	\begin{equation}\label{eq2 final proof}
	\displaystyle f^H(\epsilon)=C\int_{\Temp_{\ind}(H)} \Theta_{\sigma}(f^{H}) u(\sigma) \frac{\gamma^*(0,\sigma, \Ad_H,\psi)}{\lvert S_\sigma\rvert} d\sigma.
	\end{equation}
for some absolute constant $C>0$ and complex numbers $u(\sigma)$ of absolute values $1$. (Note that, by definition, $S_\sigma=S^{\orth}_{\sigma^{\GL}}$.) On the other hand, by the Plancherel formula for $H$, we have
\begin{equation}\label{eq3 final proof}
\displaystyle f^H(\epsilon)=\int_{\Temp_{\ind}(H)} \Theta_{\sigma}(f^{H}) \omega_\sigma(\epsilon) \mu_H(\sigma) d\sigma.
\end{equation}
Comparing \eqref{eq2 final proof} with \eqref{eq3 final proof}, and by the unicity of the Plancherel measure, we deduce that for almost all $\sigma\in \Temp_{\ind}(H)$,
\begin{equation*}
\displaystyle \mu_H(\sigma)=C \omega_\sigma(\epsilon)u(\sigma) \frac{\gamma^*(0,\sigma, \Ad_H,\psi)}{\lvert S_\sigma\rvert}.
\end{equation*}
By positivity of the Plancherel density, this gives
$$\mu_H(\sigma)=C\frac{\lvert \gamma^*(0,\sigma, \Ad_H,\psi)\rvert}{\lvert S_\sigma\rvert}.$$

Finally, for $\sigma=\St\in \Pi_2(H)$ the Steinberg representation, by \cite[\S 3.3]{HII}, we know that
$$\displaystyle \mu_H(\St)=d_H(\St)=\frac{\lvert \gamma(0,\St, \Ad_H,\psi)\rvert}{\lvert S_\sigma\rvert}.$$
Therefore, $C=1$ and the theorem is proved.
	\end{proof}

	\appendix
	
	\section{A spectral limit}
	
	The goal of this appendix is to give the proof of Proposition \ref{prop1 spectral limit}. We adopt the notations of Section \ref{Sect ortho reps}. Namely, $V$ is a vector space over $F$ of finite dimension $d$, $M=\GL(V)$, $A$ is the center of $M$, we fix a quadratic character $\chi: A\to \{\pm 1\}$, $\Temp_\chi(M)$ is the set of isomorphism classes of irreducible tempered representations $\pi$ with central character $\omega_\pi=\chi$ and $\Temp_\chi^{\orth}(M)$ denotes the subset of those $\pi\in \Temp_\chi(M)$ which are orthogonal, i.e. whose Langlands parameter factors through an orthogonal group. We equip $\Temp_\chi(M)$ and $\Temp_\chi^{\orth}(M)$ with elementary measures, both denoted by $d\pi$, as in \S \ref{S spectral measure} and \S \ref{S orth spectral measure}. We also write $d\mu_{M,\chi}(\pi)=\mu_{M,\chi}(\pi)d\pi$ for the Plancherel measure on $\Temp_{\chi}(M)$ normalised as in \S \ref{S Plancherel}. For convenience, we recall the statement of Proposition \ref{prop1 spectral limit} below.
	
	\begin{prop}\label{prop spectral limit}
		Let $\Phi\in C_c^\infty(\Temp_\chi(M))$. Then, we have
		\[\begin{aligned}
			\displaystyle & \lim\limits_{s\to 0^+} d\gamma(s,\mathbf{1}_F,\psi)\int_{\Temp_\chi(M)} \Phi(\pi) \gamma(s,\pi,\Sym^2,\psi)^{-1} \mu_{M,\chi}(\pi)d_\chi\pi= \\
			& 2\chi(-1)^{d-1}\int_{\Temp^{\orth}_\chi(M)} \Phi(\pi) \frac{\gamma^*(0,\pi,\wedge^2,\psi)}{\lvert S_\pi^{\orth}\rvert} d^{\orth}\pi.
		\end{aligned}\]
	\end{prop}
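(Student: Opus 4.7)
The plan is to perform a residue computation component by component on $\Temp_\chi(M)$, following closely the pattern of \cite[Proposition 3.41]{BPPlanch} where the Asai local factor plays the role that the symmetric square plays here.

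Since $\Phi$ has compact support, it suffices to treat the contribution of a single connected component $\cO=\{\rI_L^M(\sigma_\lambda)\mid \lambda\in i(\cA_L^M)^*\}/W(M,\sigma)$, where $L=\prod_{i=1}^k \GL_{n_i}$ and $\sigma=\boxtimes_i \tau_i\in \Pi_2(L)$ has central character equal to $\chi$ on $A$. The multiplicativity of local $\gamma$-factors under parabolic induction yields
\[
\gamma(s,\rI_L^M(\sigma_\lambda),\Sym^2,\psi)=\prod_i \gamma(s+2\lambda_i,\tau_i,\Sym^2,\psi)\prod_{i<j}\gamma(s+\lambda_i+\lambda_j,\tau_i\times\tau_j,\psi),
\]
and combined with \eqref{formula muMchi} and the analogous factorization of $\gamma(s,\pi,\Ad_{M/A},\psi)$, the integrand becomes an explicit ratio of Shahidi $\gamma$-factors whose zeros and poles as a function of $(s,\lambda)$ are located on a union of complex affine hyperplanes.

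The key step is the residue analysis as $s\to 0^+$. Since $d\gamma(s,\mathbf{1}_F,\psi)$ vanishes to first order at $s=0$, the limit extracts the simple pole of the inverse $\Sym^2$ gamma-factor, which occurs along the hyperplanes $H_i:\lambda_i=0$ (when $\tau_i$ is orthogonal) and $H_{ij}:\lambda_i+\lambda_j=0$ (when $\tau_i\simeq\tau_j^\vee$). A Plemelj-type distributional computation, evaluating $\lim_{s\to 0^+}s(1-q^{-s-\langle\alpha,\lambda\rangle})^{-1}$ against smooth functions on the compact torus, reduces the limit to a sum of integrals over maximal compatible intersections of these hyperplanes. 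Such intersections correspond bijectively to decompositions $\sigma\simeq \bigboxtimes_{i\in I}\rho_i\boxtimes\rho_i^\vee\boxtimes\bigboxtimes_{j\in J}\tau_j$ with the $\tau_j\in\Pi_2^{\orth}(\GL_{m_j})$ pairwise distinct, together with an involution $w\in W(M,L)$ swapping each dual pair; this is precisely the locus parametrizing $\Temp_\chi^{\orth}(M)\cap\cO$ in the terminology of Section \ref{Sect ortho reps}. At a point $\pi$ on this locus, the identity
\[
\gamma(s,\mathbf{1}_F,\psi)\gamma(s,\pi,\Ad_{M/A},\psi)=\gamma(s,\pi,\Sym^2,\psi)\gamma(s,\pi,\wedge^2,\psi),
\]
valid for self-dual $\pi$ as a consequence of $\Ad_M=\mathbf{1}\oplus \Ad_{M/A}=\Sym^2\oplus \wedge^2$ applied to $\phi_\pi\simeq\phi_\pi^\vee$, combines with the Plancherel density $\mu_{M,\chi}(\pi)=\chi(-1)^{d-1}\gamma^*(0,\pi,\Ad_{M/A},\psi)/d$ to leave precisely the factor $\chi(-1)^{d-1}\gamma^*(0,\pi,\wedge^2,\psi)$.

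The main obstacle is the final combinatorial bookkeeping: matching the pushforward of the Lebesgue measure on the residue locus with $d^{\orth}\pi$ as defined in \S \ref{S orth spectral measure}, collapsing the Weyl group action from $W(M,\sigma)$ to its centralizer $W(M,\sigma)_w$ of the involution, identifying $|S_\pi^{\orth}|=2^{|J|}$ with the number of independent hyperplane conditions saturated at the maximal intersections, and checking that the overall numerical factor agrees with the $2$ on the right-hand side (arising from an orientation normalization inherent to the Plemelj distributional limit on an imaginary axis). These steps, together with a uniform estimate to justify interchanging the limit in $s$ with the integral in $\lambda$, proceed as in the analogous computation of \cite[\S 3]{BPPlanch}.
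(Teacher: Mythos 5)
Your proposal is correct and follows essentially the same route as the paper: a component-by-component reduction to an explicit distributional residue computation (delegated, as in the paper, to \cite[Proposition 3.31]{BPPlanch}), using multiplicativity of the $\Sym^2$ $\gamma$-factor to locate the singular hyperplanes, identifying the maximal intersections with the parametrization of $\Temp_\chi^{\orth}(M)$ by triples $(L,w,\sigma)$, and converting $\Ad_{M/A}/\Sym^2$ into $\wedge^2/\mathbf{1}_F$ via the self-duality identity. The bookkeeping you flag as the main obstacle (Jacobians, the collapse from $W(M,\sigma)$ to $W(M,\sigma)_w$, $\lvert S_\pi^{+}\rvert=2^{\lvert J\rvert}$, and the overall factor $2$) is exactly what the paper's proof carries out explicitly.
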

	
	\begin{proof}
		This can be proved similarly to \cite[Proposition 3.41]{BPPlanch} essentially by brute force reducing everything to the computation  of the residues of some explicit families of distributions on $\bR^n$ which has been done in \cite[Proposition 3.31]{BPPlanch}. For the reader convenience, we present the details of this reduction below, mainly because keeping track of the various normalizations of measures can be tedious and also in order to convince oneself that the order of the component group appearing on the right hand side is the correct one.
		
		By the formula \eqref{formula muMchi} for the Plancherel density of $M$, for $\Re(s)>0$ we have
		\[\begin{aligned}
			\displaystyle & \int_{\Temp_\chi(M)} \Phi(\pi) \gamma(s,\pi,\Sym^2,\psi)^{-1} \mu_{M,\chi}(\pi)d_\chi\pi \\
			& =d^{-1}\chi(-1)^{d-1}\int_{\Temp_\chi(M)} \Phi(\pi) \gamma(s,\pi,\Sym^2,\psi)^{-1} \gamma^*(0,\pi, \Ad_{M/A},\psi)d_\chi\pi.
		\end{aligned}\]
		Fix $\pi\in \Temp_\chi(M)$. Using a partition of unity, we may assume that $\Phi$ is supported in a small neighborhood of $\pi$. More precisely, writing $\pi$ as a parabolic induction $\rI_L^M(\sigma)$ where $L$ is a Levi subgroup of $M$ and $\sigma\in \Pi_2(L)$, we may assume that there is a $W(M,\sigma)$-invariant open neighborhood $\cU\subset i(\cA_L^M)^*$ of $0$ such that $\lambda\mapsto \pi_\lambda:=\rI_L^M(\sigma\otimes \lambda)$ identifies $\cU/W(M,\sigma)$ with an open neighborhood of $\pi$ in which $\Phi$ is supported. Then, by definition of the measure $d_\chi\pi$, we have
		\begin{align}\label{eq8}
			\displaystyle & \int_{\Temp_\chi(M)} \Phi(\pi) \gamma(s,\pi,\Sym^2,\psi)^{-1} \mu_{M,\chi}(\pi)d_\chi\pi= \\
			\nonumber & d^{-1}\frac{\chi(-1)^{d-1}}{\lvert W(M,\sigma)\rvert}\int_{i(\cA_L^M)^*} \phi(\lambda) \gamma(s,\pi_\lambda,\Sym^2,\psi)^{-1} \gamma^*(0,\pi_\lambda, \Ad_{M/A},\psi)d\lambda
		\end{align}
		where $\phi\in C_c^\infty(i(\cA_L^M)^*)$ is the function that equals $\lambda\mapsto \Phi(\pi_\lambda)$ on $\cU$ and vanishes outside $\cU$.
		
		Further, we may decompose $L$ as a product
		$$\displaystyle L= \prod_{i\in I^n} \GL_{d_i}(F)^{m_i+n_i}\times \prod_{j\in I^s} \GL_{e_j}(F)^{p_j}\times \prod_{k\in I^o} \GL_{f_k}(F)^{q_k},$$
		and accordingly $\sigma$ as an exterior tensor product
		$$\displaystyle \sigma= \bigboxtimes_{i\in I^n} \sigma_i^{\boxtimes m_i}\boxtimes (\sigma_i^\vee)^{\boxtimes n_i} \boxtimes \bigboxtimes_{j\in I^s} \sigma_j^{\boxtimes p_j}\boxtimes \bigboxtimes_{k\in I^o} \sigma_k^{\boxtimes q_k},$$
		where $I^n$, $I^s$ and $I^o$ are disjoint finite sets and $\sigma_i\in \Pi_2(\GL_{d_i}(F))$, $\sigma_j\in \Pi_2(\GL_{e_j}(F))$ and $\sigma_k\in \Pi_2(\GL_{f_k}(F))$ are such that:
		\begin{itemize}
			\item For $i\in I^n$, $\sigma_i$ is not selfdual;
			
			\item For $j\in I^s$, $\sigma_j$ is selfdual of symplectic type;
			
			\item For $k\in I^o$, $\sigma_k$ is selfdual of orthogonal type.
		\end{itemize}
		Note that $W(M,\sigma)$ can naturally be identified with the product of symmetric groups
		$$\displaystyle W:=\prod_{i\in I^n} \mathfrak{S}_{m_i}\times \mathfrak{S}_{n_i}\times \prod_{j\in I^s} \mathfrak{S}_{p_j}\times \prod_{k\in I^o} \mathfrak{S}_{q_k}.$$
		Set
		\begin{equation}\label{eq9}
			\cA^*=\prod_{i\in I^n} \bR^{m_i+n_i}\times \prod_{j\in I^s} \bR^{p_j}\times \prod_{k\in I^o} \bR^{q_k}
		\end{equation}
		and let $\cA_0^*\subset \cA^*$ be the subspace of vectors whose sum of coordinates vanish. We will use the identification $\cA^*\simeq \cA_L^*$ sending the standard basis of $\cA^*$ onto the standard basis of $X^*(A_L)$. This induces an isomorphism $\cA_0^*\simeq (\cA_L^M)^*$. We equip $\cA^*$ with the product of Lebesgue measures and $\cA_0^*$ with the unique Haar measure inducing on the quotient $\cA^*/\cA_0^*\simeq \bR$ the Lebesgue measure (where the identification of the quotient with $\bR$ is given by summing the coordinates). Thus, by our choice of Haar measure on $(\cA_L^M)^*$ (see \S \ref{S measures}), the previous identification $\cA_0^*\simeq (\cA_L^M)^*$ has Jacobian $dP^{-1}\left(\frac{2\pi}{\log(q)}\right)^{1-S}$ where
		$$\displaystyle P:=[X^*(A_L):X^*(L)]=\prod_{i\in I^n} d_i^{m_i+n_i}\times \prod_{j\in I^s} e_j^{p_j}\times \prod_{k\in I^o} f_k^{q_k}$$
		and
		$$S=\sum_{i\in I^n} m_i+n_i+\sum_{j\in I^s} p_j +\sum_{k\in I^o} q_k.$$
		In particular, we can rewrite the right hand side of \eqref{eq8} as
		\begin{equation}\label{eq12}
			\displaystyle \left(\frac{2\pi}{\log(q)}\right)^{1-S}\frac{\chi(-1)^{d-1}}{P \lvert W\rvert}\int_{i\cA_0^*} \phi(\lambda) \gamma(s,\pi_\lambda,\Sym^2,\psi)^{-1} \gamma^*(0,\pi_\lambda, \Ad_{M/A},\psi)d\lambda
		\end{equation}
		For $\lambda\in i\cA^*$, let us denote its coordinates in the product decomposition \eqref{eq9} by
		$$\displaystyle (x_{i,\ell}(\lambda))_{\substack{i\in I^n \\ 1\leq \ell\leq m_i}} \;\;\; (x^\vee_{i,\ell}(\lambda))_{\substack{i\in I^n \\ 1\leq \ell\leq n_i}}\;\;\; (y_{j,\ell}(\lambda))_{\substack{j\in I^s \\ 1\leq \ell\leq p_j}}\;\;\; (z_{k,\ell}(\lambda))_{\substack{k\in I^o \\ 1\leq \ell\leq q_k}}.$$
		Then, for $\lambda\in i\cA^*$, we have
		$$\displaystyle \sigma_{\lambda}= \bigboxtimes_{i\in I^n} \bigboxtimes_{\ell=1}^{m_i}\sigma_i\lvert \det\rvert^{x_{i,\ell}(\lambda)/d_i}\boxtimes \bigboxtimes_{\ell=1}^{n_i} \sigma_i^\vee\lvert \det\rvert^{x^\vee_{i,\ell}(\lambda)/d_i} \boxtimes \bigboxtimes_{j\in I^s} \bigboxtimes_{\ell=1}^{p_j}\sigma_j \lvert \det\rvert^{y_{j,\ell}(\lambda)/e_j}\boxtimes \bigboxtimes_{k\in I^o} \bigboxtimes_{\ell=1}^{q_k} \sigma_k\lvert \det\rvert^{z_{k,\ell}(\lambda)/f_k}.$$
		Let us identify $\bC$ with the line in $\cA_{\bC}^*=\cA^*\otimes_{\bR} \bC$ generated by $\det\in X^*(M)\subset \cA_L^*\simeq \cA^*$. Using inductivity of symmetric square $\gamma$-factors as well as the localization of their zeroes, we find that
		\begin{align}\label{eq10}
			\displaystyle & \gamma(s,\pi_\lambda,\Sym^2,\psi)^{-1}= \\
			\nonumber & \prod_{i\in I^n} \prod_{\substack{1\leq \ell\leq m_i \\ 1\leq \ell'\leq n_i}} (s+\frac{x_{i,\ell}(\lambda)+x_{i,\ell'}^\vee(\lambda)}{d_i})^{-1}\times \prod_{j\in I^s} \prod_{1\leq \ell<\ell'\leq p_j} (s+\frac{y_{j,\ell}(\lambda)+y_{j,\ell'}(\lambda)}{e_j})^{-1} \\
			\nonumber & \times \prod_{k\in I^o} \prod_{1\leq \ell\leq \ell'\leq q_k} (s+\frac{z_{k,\ell}(\lambda)+z_{k,\ell'}(\lambda)}{f_k})^{-1} \times G(\frac{s}{2}+\lambda)
		\end{align}
		where $G$ is a meromorphic function on $\cA_{\bC}^*$ which is regular at $0$. Similarly, we have
		\begin{align}\label{eq11}
			\displaystyle & \gamma^*(0,\pi_\lambda, \Ad_{M/A},\psi)= \\
			\nonumber & \prod_{i\in I^n} \prod_{1\leq \ell\neq \ell'\leq m_i} (\frac{x_{i,\ell}(\lambda)-x_{i,\ell'}(\lambda)}{d_i})\prod_{1\leq \ell\neq \ell'\leq n_i} (\frac{x^\vee_{i,\ell}(\lambda)-x^\vee_{i,\ell'}(\lambda)}{d_i}) \\
			\nonumber & \prod_{j\in I^s} \prod_{1\leq \ell\neq \ell'\leq p_j} (\frac{y_{j,\ell}(\lambda)-y_{j,\ell'}(\lambda)}{e_j}) \times \prod_{k\in I^o} \prod_{1\leq \ell\neq \ell'\leq q_k} (\frac{z_{k,\ell}(\lambda)-z_{k,\ell'}(\lambda)}{f_k}) \times H(\lambda)
		\end{align}
		where $H$ is a meromorphic function on $\cA_{\bC}^*$ which is regular at $0$.
		
		From \eqref{eq10} and \eqref{eq11}, we deduce that, provided $\cU$ is small enough, the function
		$$\displaystyle \lambda\in i\cA_0^*\mapsto \phi(\lambda)\gamma(s,\pi_\lambda,\Sym^2,\psi)^{-1}\gamma^*(0,\pi_\lambda,\Ad_{M/A},\psi)$$
		can be written as a product
		$$\displaystyle \prod_{i\in I^n} P_{m_i,n_i,s}(\frac{\underline{x}_i(\lambda)}{d_i})\prod_{j\in I^s} Q_{p_j,s}(\frac{\underline{y}_j(\lambda)}{e_j})\prod_{k\in I^o} R_{q_k,s}(\frac{\underline{z}_k(\lambda)}{f_k})\phi_s(\lambda)$$
		where $s\mapsto \phi_s\in C_c^\infty(i\cA_0^*)$ is a map that is continuous at $s=0$ and the rational functions $P_{m_i,n_i,s}$, $Q_{p_j,s}$, $R_{q_k,s}$ are as in \cite[\S 3.2]{BPPlanch}. Then, according to \cite[Proposition 3.31]{BPPlanch}, the product of \eqref{eq12} with $d\gamma(s,\mathbf{1}_F,\psi)$ has a limit as $s\to 0$ which vanishes unless $m_i=n_i$ for every $i\in I^n$ and $p_j$ is even for every $j\in I^s$, i.e. unless $\pi\in \Temp_\chi^{\orth}(M)$. Moreover, when $\pi\in \Temp_\chi^{\orth}(M)$ this limit admits the following expression\footnote{Note that the parameter $n$ in loc.\ cit.\ corresponds to $d$ in our computation.}
		\begin{align}\label{eq16}
			\displaystyle & \log(q)\gamma^*(0,\mathbf{1}_F,\psi) \left(\frac{2\pi}{\log(q)}\right)^{1-S}\frac{\chi(-1)^{d-1}}{P \lvert W'\rvert} D (2\pi)^{N-1} 2^{1-c} \times \\
			\nonumber & \int_{i\cA'} \phi(\mu) \lim\limits_{s\to 0} s^N\gamma(s,\pi_\mu,\Sym^2,\psi)^{-1} \gamma^*(0,\pi_\mu, \Ad_{M/A},\psi)d\mu
		\end{align}
		where:
		\begin{itemize}
			\item $W'$ is the following subgroup of $W$:
			$$\displaystyle W'=\prod_{i\in I^n} \mathfrak{S}_{n_i}\times \prod_{j\in I^s} \mathfrak{S}_{p_j/2}\ltimes (\bZ/2)^{p_j/2}\times \prod_{k\in I^o} \mathfrak{S}_{\floor{\frac{q_k}{2}}}\ltimes (\bZ/2)^{\floor{\frac{q_k}{2}}};$$
			
			\item $D=\prod_{i\in I^n} d_i^{n_i}\times \prod_{j\in I^s} e_j^{\frac{p_j}{2}}\times \prod_{k\in I^o} f_k^{\ceil{\frac{q_k}{2}}}$;
			
			\item $N=\sum_{i\in I^n} n_i+\sum_{j\in I^s} \frac{p_j}{2}+\sum_{k\in I^o}\ceil{\frac{q_k}{2}}$;
			
			\item $c=\sharp \left\{k\in I^o\mid q_k \mbox{ is odd } \right\}$
			
			\item $\cA'$ is the subspace of those $\lambda\in \cA$ such that $x_{i,\ell}(\lambda)+x_{i,\ell}^\vee(\lambda)=0$ for every $i\in I^n$, $1\leq \ell\leq n_i$, $y_{j,\ell}(\lambda)+y_{j,p_j+1-\ell}(\lambda)=0$ for every $j\in I^s$, $1\leq \ell\leq p_j$, and $z_{k,\ell}(\lambda)+z_{k,q_k+1-\ell}(\lambda)=0$ for every $k\in I^o$, $1\leq \ell\leq q_k$. Moreover, we equip $\cA'$ with the Haar measure corresponding to the product of Lebesgue measures by the isomorphism
			$$\displaystyle \cA'\simeq \prod_{i\in I^n} \bR^{m_i}\times \prod_{i\in I^s} \bR^{p_j/2}\times \prod_{k\in I^o} \bR^{\floor{\frac{q_k}{2}}}$$
			$$\displaystyle \lambda\mapsto \left((x_{i,\ell}(\lambda))_{\substack{i\in I^n \\ 1\leq \ell\leq m_i}},(y_{j,\ell}(\lambda))_{\substack{j\in I^s \\ 1\leq \ell\leq p_j/2}} , (z_{k,\ell}(\lambda))_{\substack{k\in I^o \\ 1\leq \ell\leq \floor{\frac{q_k}{2}}}}\right).$$
		\end{itemize}
		Note that, e.g. by \eqref{eq10}, for $\mu\in i\cA'$ in general position the $\gamma$-factor $\gamma(s,\pi_\mu,\Sym^2,\psi)$ has a zero of order $N$ at $s=0$ so that
		\begin{equation*}
			\displaystyle \lim\limits_{s\to 0} s^N \gamma(s,\pi_\mu,\Sym^2,\psi)^{-1}=\log(q)^{-N}\gamma^*(0,\pi_\mu,\Sym^2,\psi)^{-1}.
		\end{equation*}
		Recalling that
		$$\gamma^*(0,\mathbf{1}_F,\psi)\gamma^*(0,\pi_\mu, \Ad_{M/A},\psi)=\gamma^*(0,\pi_\mu\times \pi_\mu^\vee, \psi)=\gamma^*(0,\pi_\mu,\Sym^2,\psi)\gamma^*(0,\pi_\mu,\wedge^2,\psi)$$
		this leads to
		\begin{equation}\label{eq13}
			\displaystyle \gamma^*(0,\mathbf{1}_F,\psi)\lim\limits_{s\to 0} s^N \gamma(s,\pi_\mu,\Sym^2,\psi)^{-1}\gamma^*(0,\pi_\mu, \Ad_{M/A},\psi)=\log(q)^{-N} \gamma^*(0,\pi_\mu,\wedge^2,\psi).
		\end{equation}
		Moreover, we may find an involution $w\in W(M,L)$ such that $\sigma\in \Pi_2^{\orth,w}(L)$ and the isomorphism $\cA\simeq \cA_L^*$ identifies $\cA'$ with $\cA_{L,w}^{*}$. As before, we readily check that this identification has Jacobian equal to $\frac{D}{P}\left(\frac{2\pi}{\log(q)}\right)^{N-S}$ and that
		\begin{equation}\label{eq14}
			\displaystyle W'\simeq W(M,\sigma)_w,\;\; \lvert S_\pi^{\orth}\rvert=2^c.
		\end{equation}
		Thus, by \eqref{eq13} and \eqref{eq14}, we see that \eqref{eq16} is equal to
		\[\begin{aligned}
			\displaystyle & \frac{2\chi(-1)^{d-1}}{\lvert W(M,\sigma)_w\rvert \lvert S_\pi^{\orth}\rvert} \frac{D}{P}\left(\frac{2\pi}{\log(q)} \right)^{N-S} \int_{i\cA'} \phi(\mu) \gamma^*(0,\pi_\mu,\wedge^2,\psi) d\mu \\
			& =\frac{2\chi(-1)^{d-1}}{\lvert W(M,\sigma)_w\rvert \lvert S_\pi^{\orth}\rvert} \int_{i\cA_{L}^{*,w}} \phi(\mu) \gamma^*(0,\pi_\mu,\wedge^2,\psi) d\mu.
		\end{aligned}\]
		However, provided $\cU$ was chosen small enough, this last expression is the same as the right hand side of the proposition.
	\end{proof}
	
	\bibliography{biblio}

\begin{thebibliography}{GGP12}

\bibitem[Art13]{artbook}
James Arthur.
\newblock {\em The endoscopic classification of representations}, volume~61 of
  {\em American Mathematical Society Colloquium Publications}.
\newblock American Mathematical Society, Providence, RI, 2013.
\newblock Orthogonal and symplectic groups.

\bibitem[BP21]{BPPlanch}
R.~Beuzart-Plessis.
\newblock Plancherel formula for {${\rm GL}_n(F)\backslash {\rm GL}_n(E)$} and
  applications to the {I}chino-{I}keda and formal degree conjectures for
  unitary groups.
\newblock {\em Invent. Math.}, 225(1):159--297, 2021.

\bibitem[CHH88]{CoHaHo}
M.~Cowling, U.~Haagerup, and R.~Howe.
\newblock Almost {$L^2$} matrix coefficients.
\newblock {\em J. Reine Angew. Math.}, 387:97--110, 1988.

\bibitem[Coh]{CohenJ}
Jo\"el Cohen.
\newblock A spectral expression for a certain orbital integral.
\newblock arXiv:1407.4316.

\bibitem[GGP12]{GGP}
W.~T. Gan, B.~Gross, and D.~Prasad.
\newblock Symplectic local root numbers, central critical {$L$} values, and
  restriction problems in the representation theory of classical groups.
\newblock {\em Ast\'{e}risque}, (346):1--109, 2012.

\bibitem[Gol94]{GoldSO}
David Goldberg.
\newblock Reducibility of induced representations for {${\rm Sp}(2n)$} and
  {${\rm SO}(n)$}.
\newblock {\em Amer. J. Math.}, 116(5):1101--1151, 1994.

\bibitem[GR10]{GrossReeder}
Benedict~H. Gross and Mark Reeder.
\newblock Arithmetic invariants of discrete {L}anglands parameters.
\newblock {\em Duke Math. J.}, 154(3):431--508, 2010.

\bibitem[HC99]{HCadminv}
Harish-Chandra.
\newblock {\em Admissible invariant distributions on reductive {$p$}-adic
  groups}, volume~16 of {\em University Lecture Series}.
\newblock American Mathematical Society, Providence, RI, 1999.
\newblock With a preface and notes by Stephen DeBacker and Paul J. Sally, Jr.

\bibitem[Hen00]{HennLLC}
Guy Henniart.
\newblock Une preuve simple des conjectures de {L}anglands pour {${\rm GL}(n)$}
  sur un corps {$p$}-adique.
\newblock {\em Invent. Math.}, 139(2):439--455, 2000.

\bibitem[Hen10]{HenLfn}
Guy Henniart.
\newblock Correspondance de {L}anglands et fonctions {$L$} des carr\'es
  ext\'erieur et sym\'etrique.
\newblock {\em Int. Math. Res. Not. IMRN}, (4):633--673, 2010.

\bibitem[HII08]{HII}
Kaoru Hiraga, Atsushi Ichino, and Tamotsu Ikeda.
\newblock Formal degrees and adjoint {$\gamma$}-factors.
\newblock {\em J. Amer. Math. Soc.}, 21(1):283--304, 2008.

\bibitem[HT01]{HTLLC}
Michael Harris and Richard Taylor.
\newblock {\em The geometry and cohomology of some simple {S}himura varieties},
  volume 151 of {\em Annals of Mathematics Studies}.
\newblock Princeton University Press, Princeton, NJ, 2001.
\newblock With an appendix by Vladimir G. Berkovich.

\bibitem[ILM17]{ILMfd}
Atsushi Ichino, Erez Lapid, and Zhengyu Mao.
\newblock On the formal degrees of square-integrable representations of odd
  special orthogonal and metaplectic groups.
\newblock {\em Duke Math. J.}, 166(7):1301--1348, 2017.

\bibitem[Kot88]{KottTam}
Robert~E. Kottwitz.
\newblock Tamagawa numbers.
\newblock {\em Ann. of Math. (2)}, 127(3):629--646, 1988.

\bibitem[KS99]{KottShel}
Robert~E. Kottwitz and Diana Shelstad.
\newblock Foundations of twisted endoscopy.
\newblock {\em Ast\'erisque}, (255):vi+190, 1999.

\bibitem[Lan76]{LangEis}
Robert~P. Langlands.
\newblock {\em On the functional equations satisfied by {E}isenstein series},
  volume Vol. 544 of {\em Lecture Notes in Mathematics}.
\newblock Springer-Verlag, Berlin-New York, 1976.

\bibitem[LW13]{LabWal}
J.-P. Labesse and J.-L. Waldspurger.
\newblock {\em La formule des traces tordue d'apr\`es le {F}riday {M}orning
  {S}eminar}, volume~31 of {\em CRM Monograph Series}.
\newblock American Mathematical Society, Providence, RI, 2013.
\newblock With a foreword by Robert Langlands [dual English/French text].

\bibitem[Mg14a]{Moegpaquetstable}
Colette M\oe~glin.
\newblock Paquets stables des s\'eries discr\`etes accessibles par endoscopie
  tordue; leur param\`etre de {L}anglands.
\newblock In {\em Automorphic forms and related geometry: assessing the legacy
  of {I}. {I}. {P}iatetski-{S}hapiro}, volume 614 of {\em Contemp. Math.},
  pages 295--336. Amer. Math. Soc., Providence, RI, 2014.

\bibitem[Mg14b]{MoeLLC}
Colette M\oe~glin.
\newblock Paquets stables des s\'eries discr\`etes accessibles par endoscopie
  tordue; leur param\`etre de {L}anglands.
\newblock In {\em Automorphic forms and related geometry: assessing the legacy
  of {I}. {I}. {P}iatetski-{S}hapiro}, volume 614 of {\em Contemp. Math.},
  pages 295--336. Amer. Math. Soc., Providence, RI, 2014.

\bibitem[MR18]{MoeReLLC}
Colette Moeglin and David Renard.
\newblock Sur les paquets d'{A}rthur des groupes classiques et unitaires non
  quasi-d\'eploy\'es.
\newblock In {\em Relative aspects in representation theory, {L}anglands
  functoriality and automorphic forms}, volume 2221 of {\em Lecture Notes in
  Math.}, pages 341--361. Springer, Cham, 2018.

\bibitem[MW16]{MWStab1}
Colette Moeglin and Jean-Loup Waldspurger.
\newblock {\em Stabilisation de la formule des traces tordue. {V}ol. 1}, volume
  316 of {\em Progress in Mathematics}.
\newblock Birkh\"auser/Springer, Cham, 2016.

\bibitem[Ngo10]{Ngo}
Bao~Chau Ngo.
\newblock Le lemme fondamental pour les alg\`ebres de {L}ie.
\newblock {\em Publ. Math. Inst. Hautes \'Etudes Sci.}, (111):1--169, 2010.

\bibitem[RR72]{Rao}
R.~Ranga~Rao.
\newblock Orbital integrals in reductive groups.
\newblock {\em Ann. of Math. (2)}, 96:505--510, 1972.

\bibitem[Sha84]{ShaGLn}
Freydoon Shahidi.
\newblock Fourier transforms of intertwining operators and {P}lancherel
  measures for {${\rm GL}(n)$}.
\newblock {\em Amer. J. Math.}, 106(1):67--111, 1984.

\bibitem[Sha90a]{Sha90}
F.~Shahidi.
\newblock A proof of {L}anglands' conjecture on {P}lancherel measures;
  complementary series for {$p$}-adic groups.
\newblock {\em Ann. of Math. (2)}, 132(2):273--330, 1990.

\bibitem[Sha90b]{ShaLangconj}
Freydoon Shahidi.
\newblock A proof of {L}anglands' conjecture on {P}lancherel measures;
  complementary series for {$p$}-adic groups.
\newblock {\em Ann. of Math. (2)}, 132(2):273--330, 1990.

\bibitem[Sha92]{Shaend}
Freydoon Shahidi.
\newblock Twisted endoscopy and reducibility of induced representations for
  {$p$}-adic groups.
\newblock {\em Duke Math. J.}, 66(1):1--41, 1992.

\bibitem[Sil79]{Silb}
Allan~J. Silberger.
\newblock {\em Introduction to harmonic analysis on reductive {$p$}-adic
  groups}, volume~23 of {\em Mathematical Notes}.
\newblock Princeton University Press, Princeton, NJ; University of Tokyo Press,
  Tokyo, 1979.
\newblock Based on lectures by Harish-Chandra at the Institute for Advanced
  Study, 1971--1973.

\bibitem[Wal97]{WaldLFimptrans}
J.-L. Waldspurger.
\newblock Le lemme fondamental implique le transfert.
\newblock {\em Compositio Math.}, 105(2):153--236, 1997.

\bibitem[Wal03]{WaldPlanch}
J.-L. Waldspurger.
\newblock La formule de {P}lancherel pour les groupes {$p$}-adiques (d'apr\`es
  {H}arish-{C}handra).
\newblock {\em J. Inst. Math. Jussieu}, 2(2):235--333, 2003.

\bibitem[Wal06]{Walchgtcar}
J.-L. Waldspurger.
\newblock Endoscopie et changement de caract\'eristique.
\newblock {\em J. Inst. Math. Jussieu}, 5(3):423--525, 2006.

\bibitem[Wal08]{Waldendtordue}
J.-L. Waldspurger.
\newblock L'endoscopie tordue n'est pas si tordue.
\newblock {\em Mem. Amer. Math. Soc.}, 194(908):x+261, 2008.

\bibitem[Wal10]{Waldtransfact}
J.-L. Waldspurger.
\newblock Les facteurs de transfert pour les groupes classiques: un formulaire.
\newblock {\em Manuscripta Math.}, 133(1-2):41--82, 2010.

\bibitem[Wei82]{Weiladeles}
Andr\'e Weil.
\newblock {\em Adeles and algebraic groups}, volume~23 of {\em Progress in
  Mathematics}.
\newblock Birkh\"auser, Boston, MA, 1982.
\newblock With appendices by M. Demazure and Takashi Ono.

\end{thebibliography}
	\bibliographystyle{alpha}

\end{document}